\tikzset{->-/.style={decoration={
    markings,
    mark=at position #1 with {\fill (2pt,0)--(-2pt,2.31pt)--(-2pt,-2.31pt)--cycle;}},postaction={decorate}}}
\theoremstyle{plain}
\newtheorem{thm}{Theorem}[section] 
\newtheorem{lemma}[thm]{Lemma}
\newtheorem{cor}[thm]{Corollary}
\newtheorem{prop}[thm]{Proposition}
\theoremstyle{definition}
\newtheorem{defn}[thm]{Definition} 
\newtheorem{remark}[thm]{Remark}
\newtheorem{question}{Open Question}
\newcommand{\Nhbd}{\mathcal{N}}
\newcommand{\0}{{\mathfrak{o}}}
\title{Characterizations of Stability via Morse Limit Sets}
\author{Jacob Garcia}
\date{November 2024}
\begin{document}

\begin{abstract}
    Subgroup stability is a strong notion of quasiconvexity that generalizes convex cocompactness in a variety of settings. In this paper, we characterize stability of a subgroup by properties of its limit set on the Morse boundary. Given $H<G$, both finitely generated, $H$ is stable exactly when all the limit points of $H$ are conical, or equivalently when all the limit points of $H$ are horospherical, as long as the limit set of $H$ is a compact subset of the Morse boundary for $G$. We also demonstrate an application of these results in the settings of the mapping class group for a finite type surface, $\text{Mod}(S)$. 
\end{abstract}

\maketitle


\section{Introduction}
    An important example of Kleinian groups are called convex cocompact groups. These are exactly the discrete subgroups $H<\text{Iso}^+(\mathbb{H}^3)$ whose orbit in $\mathbb{H}^3$ is convex cocompact. Additionally the quotient of $\mathbb{H}^3$ by these groups are compact Kleinian manifolds, and every infinite order element of a convex cocompact group is loxodromic. We highlight some of the other interesting properties of convex cocompact groups in the following theorem. 
    
    \begin{thm}\label{MainKleinianTheorem}(\cite{marden_1974, sullivan_1985}) 
        A Kleinian group $H<\text{Iso}^+(\mathbb{H}^3)\cong PSL_2(\mathbb{C})$ is called convex cocompact if one of the following equivalent conditions hold:
    
    \begin{enumerate}
        \item $H$ acts cocompactly on the convex hull of its limit set $\Lambda H$.

        \item Any $H$-orbit in $\mathbb{H}^3$ is quasiconvex. 
        
        \item Every limit point of $H$ is conical.

        \item $H$ acts cocompactly on $\mathbb{H}^3\cup \Omega$, where $\Omega=\partial\mathbb{H}^3\setminus \Lambda H$. \hfill $\square$
    \end{enumerate} 
    \end{thm}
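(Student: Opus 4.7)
The plan is to establish the equivalences via the cycle $(1) \Rightarrow (2) \Rightarrow (3) \Rightarrow (1)$, and then $(1) \Leftrightarrow (4)$ separately. The first two steps are coarse-geometric. For $(1) \Rightarrow (2)$, the convex hull $C(\Lambda H)$ is a convex, hence quasiconvex, subset of $\mathbb{H}^3$, and cocompactness places any $H$-orbit within bounded Hausdorff distance of it, so the orbit is itself quasiconvex. For $(2) \Rightarrow (3)$, given $\xi \in \Lambda H$, take $h_n x_0 \to \xi$ and let $\gamma$ be the geodesic ray from $x_0$ to $\xi$. Quasiconvexity places $\gamma$ in a fixed tubular neighborhood of $H \cdot x_0$; projecting each $h_n x_0$ to its nearest point on $\gamma$ and passing to a subsequence produces orbit points within uniform distance of $\gamma$ whose projections escape to $\xi$, which is precisely the conicality condition.

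For $(1) \Leftrightarrow (4)$, observe that $C(\Lambda H)$ is a closed $H$-invariant subset of $\mathbb{H}^3 \cup \Omega$, since any limit of a sequence in $C(\Lambda H)$ which exits $\mathbb{H}^3$ must lie in $\Lambda H$ and is therefore disjoint from $\Omega$. Thus cocompactness of the action on $\mathbb{H}^3 \cup \Omega$ restricts to cocompactness on $C(\Lambda H)$, giving $(4) \Rightarrow (1)$. Conversely, from $(1)$ one builds a compact fundamental domain for $\mathbb{H}^3 \cup \Omega$ by adjoining a compact fundamental domain for the action on $\Omega$ (which exists because there are no parabolic fixed points) to a fundamental domain for the action on $C(\Lambda H)$, gluing them together via the nearest-point retraction from $\mathbb{H}^3$ onto $C(\Lambda H)$.

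The main obstacle is $(3) \Rightarrow (1)$, which is the substantive content of Sullivan's characterization of geometrical finiteness. Absence of parabolic fixed points is automatic here, since a parabolic fixed point can never be a conical limit point (Beardon--Maskit). With no parabolics and all limit points conical, Bowditch's abstract reformulation of geometrical finiteness then yields that $H$ acts cocompactly on $C(\Lambda H)$. This step is not a soft coarse-geometric argument; it relies on the deeper structural theory of Kleinian groups, which is why the classical attribution is to Marden and Sullivan rather than a purely elementary source.
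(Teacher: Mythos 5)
The paper does not prove Theorem \ref{MainKleinianTheorem} at all: it is quoted as a classical result of Marden and Sullivan and closed with a $\square$, so there is no in-paper argument to compare against. Judged on its own, your outline is the standard one and is essentially sound. The implications $(1)\Rightarrow(2)$, $(4)\Rightarrow(1)$ (closed invariant subsets inherit cocompactness), and the reduction of $(3)\Rightarrow(1)$ to the Beardon--Maskit dichotomy (a parabolic fixed point is never conical) plus the geometric-finiteness characterizations are all correct, and you rightly flag $(3)\Rightarrow(1)$ as the step with real content that cannot be done by soft coarse geometry --- which is consistent with the paper's decision to simply cite Sullivan.

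Two local points. First, in $(2)\Rightarrow(3)$ the sentence about ``projecting each $h_nx_0$ to its nearest point on $\gamma$'' is the wrong way around: the $h_nx_0$ need not lie near $\gamma$ at all, so projecting them proves nothing. The correct mechanism is the one you state just before it: the segments $[x_0,h_nx_0]$ lie in $\Nhbd_K(Hx_0)$ by quasiconvexity, they converge to $\gamma$ (Arzel\`a--Ascoli), the orbit is closed since $H$ is discrete, hence $\gamma\subset\Nhbd_K(Hx_0)$; then for each $t$ choose an orbit point within $K$ of $\gamma(t)$ and let $t\to\infty$. Second, in $(1)\Rightarrow(4)$ the ``glue two fundamental domains'' description is vaguer than it needs to be; the clean classical argument is that the nearest-point retraction $\overline{\mathbb{H}^3}\setminus\Lambda H\to C(\Lambda H)$ is continuous, proper, and $H$-equivariant, so it descends to a proper map from $(\mathbb{H}^3\cup\Omega)/H$ onto the compact space $C(\Lambda H)/H$, forcing the source to be compact. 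Finally, the elementary cases ($\Lambda H$ empty or a point, where $C(\Lambda H)$ degenerates) should be set aside by convention at the outset, as is standard. None of these affect the overall correctness of the plan.
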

    
    However other more recent versions of this relationship have been shown. Swenson showed a generalization of this theorem for Gromov hyperbolic groups equipped with their visual boundaries \cite{swenson2001}, and there has been recent interest in generalizing these relationships beyond the setting of word-hyperbolic groups. 
    For example, convex cocompact subgroups of mapping class groups acting on Teichm\"uller space, equipped with the Thurston compactification, have been characterized by Farb-Mosher \cite{Farb2001ConvexCS} as exactly the subgroups which determine Gromov hyperbolic surface group extensions. There has also been recent work done in this direction for subgroups of $Out(F_n)$, relating convex cocompact subgroups to hyperbolic extensions of free groups \cite{Dowdall2017, Hamenstdt2018, MR_and_Stab2017}. 
    
    There has also been interest in creating generalizations which are applicable for any finitely generated group. An important generalization comes from \cite{Durham_Taylor_ConvexCC_and_Stab_2015}, where Durham and Taylor introduced \textit{stability} (see Definition \ref{stabledefinition}) to characterize convex cocompact subgroups of a mapping class group in a way which is intrinsic to the geometry of the mapping class group, and in fact, generalizes the notions of convex cocompactness to any finitely generated group. A subgroup of isometries is stable when the orbit map $H\rightarrow X$ is a quasi-isometric embedding into a hyperbolic subset of $X$. The concept of stability was later generalized to \textit{strongly quasiconvex subgroup}, introduced in \cite{Tran2019}. We note that a subgroup is stable when it is undistorted and strongly quasiconvex.
    
    In the Kleinian, hyperbolic, and mapping class group settings, convex cocompactness is characterized by properties of the limit set on an appropriate boundary, as show by Kent and Leininger in \cite{kent_leininger_2008}, and independently by Hamenst\"adt in \cite{Hamenstadt2005}. For an arbitrary finitely generated group, it is possible to construct a (quasi-isometric invariant) boundary called the Morse boundary, which was introduced by Cordes in \cite{cordes_boundry2016} and expanded by Cordes and Hume in \cite{Cordes_Hume_2016_Stability}. A generalization of convex cocompactness developed by Cordes and Durham \cite{boundarycc2016}, called \textit{boundary convex cocompactness}, is an exact generalization of item (1) of Theorem \ref{MainKleinianTheorem}, in the case where $H$ is a proper action on an arbitrary proper geodesic metric space with a non-empty and compact limit set in the Morse boundary, see Definition \ref{BoundaryConvexCocompactDefinition}.

    The purpose of this paper is fully generalize item (3) of Theorem  \ref{MainKleinianTheorem} to the setting of finitely generated groups, thereby answering \cite[Question 1.15]{boundarycc2016}. In fact, we additionally generalize some other characterizations from the hyperbolic setting found in \cite{swenson2001}. We summarize the results of this paper in the following theorem:

\begin{thm}\label{MyMainTheorem}

Let $H$ be a finitely generated group acting by isometries on a proper geodesic metric space $X$. The following are equivalent:
    
    \begin{enumerate}
        \item Any $H$-orbit in $X$ is a stable embedding of $H\rightarrow X$. 

        \item $H$ acts boundary convex cocompactly on $X$.

        \item Every point in $\Lambda H$ is a conical limit point of $H$, $\Lambda H\neq\emptyset$, and $\Lambda H$ is a compact subset of the Morse boundary of $H$.

        \item Every point in $\Lambda H$ is a horospherical limit point of $H$, $\Lambda H\neq\emptyset$, and $\Lambda H$ is a compact subset of the Morse boundary of $H$.
    \end{enumerate}
    \end{thm}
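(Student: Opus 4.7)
The plan is to prove Theorem~\ref{MyMainTheorem} by establishing the cycle of implications $(1) \Rightarrow (2) \Rightarrow (3) \Rightarrow (4) \Rightarrow (1)$. The first three steps are relatively direct adaptations of classical Kleinian arguments inside the boundary convex cocompactness framework of Cordes--Durham \cite{boundarycc2016}, while the closing step $(4) \Rightarrow (1)$ carries the main technical weight and is where the novelty of the theorem lies.

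For $(1) \Rightarrow (2)$, a stable orbit map $H \to X$ is in particular a quasi-isometric embedding with uniformly Morse image, and a short argument verifies that $H$ acts cocompactly on the Morse weak convex hull of $\Lambda H$; compactness of $\Lambda H$ follows because stability identifies it with the image of the (compact) Morse boundary of $H$ inside the Morse boundary of $X$. For $(2) \Rightarrow (3)$, compactness and nonemptiness of $\Lambda H$ are already part of the definition of boundary convex cocompactness; given $\xi \in \Lambda H$, a Morse geodesic ray $\gamma$ from the basepoint $\mathfrak{o}$ to $\xi$ lies in a bounded neighborhood of the weak hull, so cocompactness of the $H$-action on the hull yields $h_n \in H$ with $h_n \cdot \mathfrak{o}$ within uniform distance of $\gamma(t_n)$ for a sequence $t_n \to \infty$, which is exactly conicality.

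The implication $(3) \Rightarrow (4)$ is essentially formal: an orbit sequence that shadows a Morse ray toward $\xi$ must eventually enter every horoball around $\xi$. The core work is $(4) \Rightarrow (1)$, which I would attack by contrapositive. If $H \cdot \mathfrak{o}$ is not a stable embedding, one produces a sequence of pairs of orbit points, together with geodesics in $X$ connecting them, either witnessing unbounded distortion of the word metric on $H$ or escaping every uniform Morse neighborhood of $H \cdot \mathfrak{o}$. Using compactness of $\Lambda H$, extract a subsequence whose geodesics accumulate on some $\xi \in \Lambda H$; the horosphericity of $\xi$ provides an $H$-orbit sequence entering every horoball around $\xi$, and pairing this sequence with the defect-witnessing one should force a geometric contradiction.

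The main obstacle is this last step. Horospherical is the weakest among the natural dynamical conditions on boundary points, so extracting the rigid geometric control required for stability is delicate, particularly outside the hyperbolic setting where horoballs are no longer canonical objects. The hard technical content will be isolating a suitable notion of Morse horoball and showing that the uniform Morse gauge supplied by compactness of $\Lambda H$ is enough to adapt Swenson's hyperbolic arguments \cite{swenson2001} to this considerably more flexible geometry, ruling out the pathological sequences that would spoil stability.
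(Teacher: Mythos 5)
Your cycle $(1)\Rightarrow(2)\Rightarrow(3)\Rightarrow(4)\Rightarrow(1)$ differs structurally from the paper, which takes $(1)\Leftrightarrow(2)$ from Cordes--Durham, proves $(2)\Rightarrow(3)$ and $(3)\Rightarrow(4)$, and closes the loop with $(4)\Rightarrow(2)$ rather than $(4)\Rightarrow(1)$. The first three steps of your plan are workable, with two caveats. For $(2)\Rightarrow(3)$, conicality as defined here demands a single constant $K$ valid for \emph{every} Morse geodesic ray representing $\xi$, not just one ray issuing from the basepoint; you need the uniform asymptotic fellow-travelling of all such rays (the analogue of Swenson's Lemma 4, i.e.\ Corollary \ref{betterswensonlemma4} and Proposition \ref{ConicalLimitPointsOnlyNeedTailsOfRays}) to upgrade your argument. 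For $(3)\Rightarrow(4)$, ``essentially formal'' undersells it: a point $K$-close to a deep subray must be shown to lie in a Morse horoball of the original ray, which requires the parameter bookkeeping $b\geq a$, membership in a uniform Morse stratum, and in the paper passes through funnels (Propositions \ref{MorseConicalImpliesMorseFunnelled}, \ref{funnelscontainhoroballs}, \ref{horoballscontainfunnels}); still, this is a genuine but feasible adaptation of Swenson, and your citation of that route is the right instinct.

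The real problem is $(4)\Rightarrow(1)$. As written it is not an argument: ``pairing this sequence with the defect-witnessing one should force a geometric contradiction'' is precisely the step where the theorem's content lives, and you give no mechanism. Negating stability yields either distortion of the orbit map or a sequence of geodesics between orbit points escaping every uniform Morse gauge; neither failure mode obviously produces a point of $\Lambda H$ whose horoballs miss the orbit --- the offending geodesics need not converge to anything in $\Lambda H$, and if they do, that limit point may be perfectly horospherical while the defect occurs transversally. The paper avoids this entirely by proving the weaker statement $(4)\Rightarrow(2)$: if the action on $WCH(\Lambda H)$ is not cobounded, the Sliding Spheres lemma (Lemma \ref{noncoboundedactionsarewellspreadinweakconvexhull}) produces points $p_n$ in the hull with $B_n(p_n)\cap H\0=\emptyset$ but $\0\in B_{n+1}(p_n)$; Arzel\`a--Ascoli together with $\Lambda(WCH(\Lambda H))\subseteq\Lambda H$ (Lemma \ref{WeakConvexHullAndLimitSetContainments}) gives a limit ray whose endpoint lies in $\Lambda H$, and the funnel condition at that endpoint places an orbit point within $n-1$ of $p_n$, a contradiction. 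Stability is then recovered from boundary convex cocompactness by the Cordes--Durham theorem, which supplies exactly the quasi-isometric embedding and uniform Morseness you would otherwise have to rebuild by hand. You should either restructure your final step to target $(2)$ rather than $(1)$, or supply the missing extraction argument in full.
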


\begin{remark}
    The result $(1)\Leftrightarrow (2)$ is found in the main theorem of \cite{boundarycc2016}. We show $(3)\Rightarrow (4)$ in a combination of Proposition \ref{MorseConicalImpliesMorseFunnelled} and Theorem \ref{MorseHoroLimitsAreMorseFunnelLimits}, using methods similar to \cite{swenson2001}. We show $(4)\Rightarrow (2)$ in Theorem \ref{AllMorseHoroImpliesCoboundedAction}, by first showing that non-cobounded actions on the weak convex hull of $\Lambda H$ admit a sequence of points $p_n$ which diverge quickly from the orbit (see Lemma \ref{noncoboundedactionsarewellspreadinweakconvexhull}), but then showing that the $p_n$ converge to an element of $\Lambda H$, which ultimately contradicts the conical assumption. We give an alternate proof to $(2)\Rightarrow (3)$ in Proposition \ref{BoundaryCCImpliesConical} which does not use the main theorem from \cite{boundarycc2016}.
\end{remark}

A limit point in $\Lambda H$ is \emph{conical} if the limit point is accumulated by the orbit in a strong way: every geodesic ray representing the limit point gets boundedly close to the orbit. See Definitions \ref{hyperbolicdef2} and \ref{conical limit point def}. In general, a geodesic ray which is constructed from geodesic segments $[x,hx]$ need not stay close to the orbit of $H$, even when $H$ is stably embedded in the hyperbolic setting. For an example, see \cite[Lemma 3]{swenson2001}. A limit point in $\Lambda H$ is \emph{horospherical} if it is accumulated by the orbit in a similar way: every horoball around a geodesic ray representing the limit point intersects the orbit, see Definition \ref{conical limit point def}.

We take a moment to provide a broad overview of stability in the recent literature. In addition to results for the mapping class group from above in \cite{Farb2001ConvexCS,kent_leininger_2008,Hamenstadt2005,Durham_Taylor_ConvexCC_and_Stab_2015}, it is also known that infinite index Morse subgroups of the mapping class group exactly coincide with stable subgroups \cite{Kim2019}, and stable subgroups of mapping class groups (and more generally, stable subgroups of Morse local-to-global groups) have interesting combination theorems \cite{Russell2021}.
Stability has also been studied in the context of Morse local-to-global groups \cite{CordesRussellSprianoZalloum_RegOfMorseGeoAndGrowth}, relatively hyperbolic groups \cite{MR_and_Stab2017}, and hierarchically hyperbolic groups \cite{AbbottBehrstockDurham_AcylindricalStabilityInHHG_2021,Russell2023}. It is also known that stable subgroups admit finite height \cite{Antolin2019} and that the growth series of a stable subgroup is rational \cite{Russell2021}. There has also been recent work on recognizing spaces, i.e. spaces where the orbit map induces a quasi-isometric embedding, for stable subgroups \cite{Balasubrananya2023, Zbinden2024}.

Comparing Theorem \ref{MyMainTheorem} to Theorem \ref{MainKleinianTheorem}, we see a cocompact action involving a domain of discontinuity in Theorem \ref{MainKleinianTheorem} which does not appear in Theorem \ref{MyMainTheorem}. This is because the standard methods used for showing this property rely on the fact that the (Gromov-)hyperbolic boundary for a word hyperbolic group is a compactification, and thus finding the requisite compact set needed for a cocompact action boils down to finding an appropriate closed subset. In contrast, the Morse boundary usually does not compactify the underlying group, in fact the Morse boundary compactifies a finitely generated group $H$ if and only if $H$ is word hyperbolic, see \cite[Theorem 3.10]{cordes_boundry2016} and \cite[Lemma 4.1]{boundarycc2016}. This leads to an open question:

\begin{question}
    Does there exist an appropriate classification of boundary convex cocompactness via an appropriate action on a domain of discontinuity analog? 
\end{question}

For other properties in Theorem \ref{MyMainTheorem}, we are able to address the need for some compactness in the Morse Boundary by assuming that the limit set of the group, $\Lambda H$, is compact. See Definition \ref{LimitSetDefinition} and Corollary \ref{LimitSetsInstratumAreCompact}. It is not possible to remove the compactness condition in either point (3) or (4) of Theorem \ref{MyMainTheorem}. For example, consider the group $G=\mathbb{Z}^2*\mathbb{Z}*\mathbb{Z}=\langle a,b\rangle * \langle c\rangle * \langle d\rangle$ with subgroup $H=\langle a,b,c\rangle$. As discussed in \cite[Remark 1.8]{boundarycc2016}, $H$ is isometrically embedded and convex in $G$, and so every point of $\Lambda H$ is conical with respect to $H$. In fact all rays representing a point in $\Lambda H$ travel through $H$ infinitely often. However $H$ is not hyperbolic, so $H$ is not stable. See \cite[Section 1.2]{boundarycc2016} for a complete discussion. 

\subsection{Applications}

Convex cocompact subgroups of mapping class groups have been well studied, see \cite{Farb2001ConvexCS,Hamenstadt2005}, but in particular conical limit point characterizations have been analyzed before. Let $S$ be a finite type surface, $\text{Mod}(S)$ its associated mapping class group, and let $\mathcal{T}(S)$ be its associated Teichm\"uller space. In \cite[Theorem 1.2]{kent_leininger_2008}, it is shown that a subgroup $H$ of $\text{Mod}(S)$ is convex cocompact if and only if all the limit points of $H$ in the Thurston compactification of $\mathcal{T}(S)$ are conical. A combination of Theorem \ref{MyMainTheorem}, \cite[Theorem 1.1]{Durham_Taylor_ConvexCC_and_Stab_2015}, and \cite[Theorem 1.18]{boundarycc2016} gives the following direct comparison, which uses the intrinsic geometry of $\textup{Mod}(S)$ instead of the geometry of $\mathcal{T}(S)$.

\begin{thm}\label{MappingClassTheoremIntro}
    Let $S$ be a finite type surface, and let $H<\textup{Mod}(S)$ be finitely generated. Then $H$ is a convex-cocompact subgroup of $\textup{Mod}(S)$ if and only if every point in $\Lambda H$ is a conical limit point of $H\curvearrowright\textup{Mod}(S)$, $\Lambda H\neq\emptyset$, and $\Lambda H$ is compact in the Morse boundary of $\textup{Mod}(S)$.
\end{thm}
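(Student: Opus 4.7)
The plan is to derive this statement by concatenating two equivalences: the characterization of convex cocompact subgroups of $\textup{Mod}(S)$ in terms of boundary convex cocompactness of the action $H\curvearrowright\textup{Mod}(S)$, and the equivalence of boundary convex cocompactness with the conical limit set condition from Theorem \ref{MyMainTheorem}. No genuinely new geometric ingredient should be needed; the work is in splicing the two equivalences correctly and checking the hypotheses match.

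First I would invoke \cite[Theorem 1.18]{boundarycc2016}, which states that a finitely generated $H<\textup{Mod}(S)$ is convex cocompact (in the classical Farb--Mosher / Kent--Leininger sense, acting on Teichm\"uller space) if and only if $H$ acts boundary convex cocompactly on $\textup{Mod}(S)$ regarded as a proper geodesic metric space with a word metric from a finite generating set. This is precisely condition (2) of Theorem \ref{MyMainTheorem} applied to the action $H\curvearrowright X=\textup{Mod}(S)$.

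Next I would apply the equivalence (2)$\Leftrightarrow$(3) of Theorem \ref{MyMainTheorem} with $X=\textup{Mod}(S)$. This replaces ``$H$ acts boundary convex cocompactly on $\textup{Mod}(S)$'' by the statement that every point of $\Lambda H$ is a conical limit point of $H\curvearrowright\textup{Mod}(S)$, $\Lambda H\neq\emptyset$, and $\Lambda H$ is a compact subset of the Morse boundary $\partial_*\textup{Mod}(S)$. Stringing the two biconditionals together produces exactly the theorem.

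The only subtle point to verify is that the ambient space in \cite[Theorem 1.18]{boundarycc2016} (giving boundary convex cocompactness on $\textup{Mod}(S)$) is the same space on which conicality is being measured in the conclusion of our theorem. Since Theorem \ref{MyMainTheorem} is formulated for an arbitrary proper geodesic metric space acted on by isometries, and $\textup{Mod}(S)$ with a word metric (or equivalently its Cayley graph) is such a space with $H$ acting by left multiplication, the hypotheses of Theorem \ref{MyMainTheorem} are satisfied. Nothing in the argument requires one to pass to Teichm\"uller space or to the curve graph, so there is no main obstacle beyond checking this compatibility of setups.
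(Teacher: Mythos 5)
Your proposal is correct and is exactly the argument the paper intends: the paper itself states the result as ``a straightforward combination of Theorem \ref{MyMainTheorem} and \cite[Theorem 1.18]{boundarycc2016},'' i.e., convex cocompactness is equivalent to boundary convex cocompactness of $H\curvearrowright\textup{Mod}(S)$, which by $(2)\Leftrightarrow(3)$ of Theorem \ref{MyMainTheorem} is equivalent to the conical limit set condition. Your check that the ambient space is $\textup{Mod}(S)$ with a word metric in both equivalences is the right compatibility point to verify.
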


This theorem, combined with the above result of \cite{kent_leininger_2008}, gives the following immediate corollary, which shows that conicality is a strong condition in the setting of mapping class groups:

\begin{cor}\label{TeichApplicationAllConical}
    Let $S$ be a finite type surface, and let $H <\textup{Mod}(S)$ be finitely generated. The following are equivalent:

    \begin{enumerate}
        \item Every limit point of $H$ in the Morse boundary of $\textup{Mod}(S)$ is a conical limit point of $H\curvearrowright\textup{Mod}(S)$ and $\Lambda H$ is compact.
        \item Every limit point of $H$ in the Thurston compactification of $\mathcal{T}(S)$ is a conical limit point of $H\curvearrowright\mathcal{T}(S)$.
    \end{enumerate}
\end{cor}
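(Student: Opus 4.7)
The plan is to treat this as a short chain of biconditionals, using convex cocompactness of $H$ in $\textup{Mod}(S)$ as the bridge between (1) and (2). The two ingredients are already in hand: the preceding theorem, obtained by combining Theorem \ref{MyMainTheorem} with \cite[Theorem 1.18]{boundarycc2016}, gives the equivalence of (1) with $H$ being convex cocompact in $\textup{Mod}(S)$; and the Kent--Leininger result \cite[Theorem 1.2]{kent_leininger_2008} gives the equivalence of convex cocompactness with (2). Chaining these two yields (1) $\Leftrightarrow$ (2).

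Concretely, I would first invoke the preceding theorem to rewrite (1) as ``$H$ is convex cocompact in $\textup{Mod}(S)$''. One minor point of bookkeeping is that the preceding theorem explicitly includes the hypothesis $\Lambda H \neq \emptyset$, while (1) does not; this discrepancy can only occur when $H$ is finite, in which case $\Lambda H$ is empty in both boundaries and (1), (2), and convex cocompactness all hold trivially, so the biconditional is preserved. Next I would invoke \cite[Theorem 1.2]{kent_leininger_2008} to rewrite convex cocompactness as (2). Combining the two biconditionals yields the corollary.

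The main (and really only) obstacle is alignment of definitions: I need to verify that the two notions of \emph{conical limit point}, one formulated via the Morse boundary of $\textup{Mod}(S)$ and the other via the Thurston compactification of $\mathcal{T}(S)$, are being used in exactly the senses required by the cited theorems, and that the symbol $\Lambda H$ is interpreted on the appropriate boundary in each clause. No new geometric argument is needed beyond this alignment, which is presumably why the author describes the result as an immediate corollary.
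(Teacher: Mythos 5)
Your route is exactly the paper's: the corollary is obtained by chaining the theorem immediately preceding it (statement (1) is equivalent to $H$ being convex cocompact in $\textup{Mod}(S)$, via Theorem \ref{MyMainTheorem} together with \cite[Theorem 1.18]{boundarycc2016}) with Kent--Leininger's \cite[Theorem 1.2]{kent_leininger_2008} (convex cocompactness is equivalent to statement (2)). The paper gives no further argument and calls the result immediate, so on the main line there is nothing to compare.

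The one place you go beyond the paper---discharging the missing hypothesis $\Lambda H\neq\emptyset$---is where you slip. It is not true that an empty Morse limit set forces $H$ to be finite. Take $H=\langle T_c\rangle$ for a Dehn twist $T_c$ on a surface of sufficient complexity: the twist axis lies in a quasi-isometrically embedded rank-two abelian subgroup, so no subsequence of the geodesics $[\0,T_c^n\0]$ stays in a fixed Morse stratum, and $\Lambda H=\emptyset$ in the Morse boundary even though $H$ is infinite. For this $H$, statement (1) holds vacuously, while the limit set in $\textup{PMF}(S)$ is nonempty and, since $H$ is not convex cocompact, Kent--Leininger forces it to contain a non-conical point, so statement (2) fails. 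Thus the nonemptiness hypothesis cannot be removed by the finite-group argument you give; it is genuinely needed. (The paper's own statement of the corollary omits $\Lambda H\neq\emptyset$ and inherits the same gap, but your write-up actively asserts a false reduction, so you should instead either carry $\Lambda H\neq\emptyset$ as a hypothesis in (1) or note explicitly that the equivalence is being claimed only for subgroups with nonempty Morse limit set.)
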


We also show that there exists a natural $\textup{Mod}(S)$-equivariant map from $\textup{Mod}(S)$ to $\mathcal{T}(S)$ which sends conical limit points of $H<\textup{Mod}(S)$ in the Morse boundary of $\text{Mod}(S)$ to conical limit point of $H$ in the the Thurston compactification of $\mathcal{T}(S)$. This directly proves the implication $(1)\Rightarrow(2)$ in Corollary \ref{TeichApplicationAllConical} without requiring results of \cite{kent_leininger_2008}, and in fact, does not require $H$ to be a convex cocompact subgroup. See Theorem \ref{TeichApplicationOneConical} for details.

Recall that $\text{Out}(F_n)$ denotes the group of outer automorphisms on the free group $F_n$ with $n$ generators. Hamenst\"adt and Hensel defined convex cocompact subgroups of $\text{Out}(F_n)$ as subgroups which have quasi-convex orbits on the free factor graph \cite[Definition 2]{Hamenstdt2018}. In \cite[Theorem 1.3]{Durham_Taylor_ConvexCC_and_Stab_2015}, it is shown that if $H\leq \text{Out}(F_n)$ is convex cocompact then $H$ is a stable subgroup of $\text{Out}(F_n)$. Combining this fact with Theorem \ref{MyMainTheorem}, we get the following relationship.

\begin{thm}\label{Out(Fn)result}
    Let $n\geq 3$. Suppose $H$ is a convex cocompact subgroup of $\textup{Out}(F_n)$ in the sense of \cite[Definition 2]{Hamenstdt2018}. Then every limit point of $H$ in the Morse boundary of $\textup{Out}(F_n)$ is a conical limit point of $H\curvearrowright\textup{Out}(F_n)$ and $\Lambda H$ is compact.
\end{thm}

However, in contrast of Theorem \ref{MappingClassTheoremIntro}, it is unlikely that the converse holds. Due to an announcement by Hamenst\"{a}dt \cite{HamenstadtAnnoucement}, there is a classification of stable subgroups of $\textup{Out}(F_n)$ which shows the converse of \cite[Theorem 1.3]{Durham_Taylor_ConvexCC_and_Stab_2015} does not hold. 

\subsection{Acknowledgments} I would like to thank my advisor Matthew Gentry Durham for their guidance and support during this project. Thanks to Elliott Vest for many conversations and for his comments on an earlier draft of this paper. I would like to thank Sam Taylor for a helpful conversation regarding $\text{Out}(F_n)$. I would also like to thank the referees for their helpful comments. 
    

\section{Background}
    We first begin by setting some notation and basic definitions. We recall that a metric space $X$ is \emph{proper} if closed balls are compact. A path $\alpha:I\rightarrow X$ is a \emph{geodesic} if $I\subseteq\mathbb{R}$ is a closed (potentially unbounded) interval and $\alpha$ preserves distances, i.e., if for all $s,t\in I$, $d_\mathbb{R}(s,t)=d_X(\alpha(s),\alpha(t))$. If $I=[a,b]$, we call $\alpha$ a \emph{geodesic segment}, if $I=[a,\infty)$, we call $\alpha$ a \emph{geodesic ray}, and if $I=(-\infty,\infty)$ then we call $\alpha$ a \emph{geodesic line}. Given two points $x,y\in X$, we use $[x,y]$ to denote a geodesic segment starting at $x$ and ending at $y$. If there exists a geodesic segment between any pair of points in $X$, we say $X$ is a \emph{geodesic metric space}.

    Given two geodesic segments $\alpha=[x,y]$ and $\beta=[y,z]$, we denote the (speed preserving) concatenation between then as $[x,y]*[y,z]$. Formally, given $\alpha:[0,a]\rightarrow X$ and $\beta:[0,b]\rightarrow X$ with $\alpha(a)=\beta(0)$, we have $\alpha*\beta:[0,a+b]\rightarrow X$ given by 
    \[\alpha*\beta(t)=
    \begin{cases}
        \alpha(t)&t\in[0,a],\\
        \beta(t-a)&t\in[a,a+b].\\
    \end{cases}
    \]
    We define the concatenation analogously in the case where $\alpha$ is a geodesic segment and $\beta$ is a geodesic ray.

    We use $B_K(p)$ to denote the closed ball of radius $K$ centered at $p$, i.e. $B_K(p)=\{x\in X:d(p,x)\leq K\}$. Given $A\subseteq X$ and $K\geq0$, we denote the \emph{K-neighborhood} of $A$ by $\mathcal{N}_K(A)=\{x\in X:d(x,A)\leq K\}$. Given two closed sets $A,B\subseteq X$, we denote the \emph{Hausdorff distance} between $A$ and $B$ as 
    \[d_{Haus}(A,B)=\min\{K:A\subseteq\mathcal{N}_K(B)\text{ and }B\subseteq\mathcal{N}_K(A)\}.\]
    Finally, given a closed set $A\subseteq X$, and a point $p\in X$, we denote the \emph{closest point projection} of $x$ to $A$ as
    \[\pi_A(p)=\{a\in A:d(a,p)=d(A,p)\}.\]

    We now take a moment to give the definition of a quasi-geodesic, since this term will appear frequently.

\begin{defn}
    Let $I\subseteq\mathbb{R}$ be a closed interval $X$ be a metric space, and let $\varphi:I\rightarrow X$. Let $K\geq 1$ and $C\geq0$. We call $\varphi$ a \emph{$(K,C)$-quasi-geodesic} if, for every $s,t\in I$, we have
    \[\frac{1}{K}d(s,t)-C\leq d(\varphi(s),\varphi(t))\leq Kd(s,t)+C.\]
    We call $\varphi$ a \emph{quasi-geodesic} if there exists a pair $(K,C)$ so that $\varphi$ is a $(K,C)$-quasi-geodesic.
\end{defn}

For a more thorough treatment of quasi-geodesics and their properties, we refer the reader to \cite{Clay2017-hf}.

\subsection{Hyperbolic geometry}

Here we provide a brief overview of the main result of \cite{swenson2001}, which is a direct analog of Theorem \ref{MainKleinianTheorem} in the setting (Gromov)-hyperbolic geometry. Although our main results are not in the setting of hyperbolicity, many of the tools and constructions we use are inspired by the results in this setting. We begin with the definition of a $\delta$-hyperbolic space.

\begin{defn}\label{delta hyperbolic}
    Let $X$ be a geodesic metric space. We call $X$ a \emph{$\delta$-hyperbolic} metric space if every geodesic triangle is $\delta$-slim, i.e., if for every $x,y,z\in X$, $[x,z]\subseteq\mathcal{N}_\delta([x,y]\cup[y,z])$. We call $X$ a \emph{hyperbolic space} if $X$ is $\delta$-hyperbolic for some $\delta\geq0$.
\end{defn}

One of the most useful facts in a $\delta$-hyperbolic space is that quasi-geodesics fellow-travel geodesics. This is known as the Morse lemma. A detailed proof of this lemma can be found in \cite[Theorem III.H.1.7]{bridsonhaeflingertextbook}.

\begin{lemma}[Morse Lemma]\label{MorseLemma}
    Let $X$ be a proper, geodesic $\delta$-hyperbolic space. There exists a (non-decreasing) function $N:[1,\infty)\times[0,\infty)\rightarrow[0,\infty)$ such that, for any geodesic $\alpha$ and any $(K,C)$-quasi-geodesic $\varphi:[a,b]\rightarrow X$ such that $\varphi(a),\varphi(b)\in\alpha$, we have that $\varphi\subseteq\mathcal{N}_{N(K,C)}(\alpha)$.
\end{lemma}

An important construction associated with $\delta$-hyperbolicity is the visual boundary. For more information on the visual boundary of a hyperbolic space and it's uses, we direct the reader to \cite{bridsonhaeflingertextbook} and\cite{kapovich2002boundaries}.

\begin{defn}\label{visual boundary}
    Let $X$ be a proper geodesic space, and let $\0\in X$. Let $R_\0(X)$ be the collection of all geodesic rays $\alpha:[a,\infty)\rightarrow X$ such that $\alpha(a)=\0$. Then we can define an equivalence relation on $R_\0(X)$ by setting $\alpha\sim\beta$ whenever the Hausdorff distance between $\alpha$ and $\beta$ is bounded. The \emph{visual boundary} of $X$ based at $\0$ is defined to be $\partial_\infty X_\0=R_\0(X)/\sim$. We use $\alpha(\infty)$ to refer to the equivalence class of $\alpha$ in $\partial_\infty X_\0$. We equip $\partial_\infty X_\0$ with the topology generated by the neighborhood basis for $\alpha$, \[U(\alpha,r,n)=\{\beta\in\partial_\infty X_\0:d(\alpha(t),\beta(t)\leq r\text{ for all }t\leq n\}.\]
\end{defn}

We also present another, equivalent definition for two rays to be in the same equivalence class $\alpha(\infty)$. As a note, this definition does not require either $\alpha$ or $\beta$ to be based at $\0$. 

\begin{defn}\label{asymptoticallyfellowtravel} Let $(X,d)$ be a proper, geodesic metric space, and let $\alpha:[a,\infty)\rightarrow X$ and $\beta:[b,\infty)\rightarrow X$ be two geodesic rays.
    We say $\alpha$ and $\beta$ \textit{$K$-asymptotically fellow-travel}, denoted by $\alpha \sim_K \beta$, if there exists $T\in\mathbb{R}$ so that whenever $t\geq T$, we have $d(\alpha(t),\beta(t))\leq K$.
\end{defn}

Importantly, in the context of a $\delta$-hyperbolic space, Definition \ref{asymptoticallyfellowtravel} classifies the tail-end fellow traveling distance in terms of only $\delta$, as expressed in the following lemma of Swenson, and is important for the definition of a horoball in a $\delta$-hyperbolic space.

\begin{lemma}\label{swensonfellowtravel}(\cite[Lemma 4]{swenson2001})
    Suppose $\alpha$ and $\beta$ are geodesic rays with $d_{Haus}(\alpha,\beta)<\infty$, i.e., with $\alpha(\infty)=\beta(\infty)$, then there exists an isometry $\rho:\mathbb{R}\rightarrow\mathbb{R}$ so that $\alpha\sim_{6\delta}\beta\circ\rho$.
\end{lemma}

\begin{defn}\label{hyperbolicdef1} (\cite{swenson2001})
Let $X$ be a proper, geodesic, $\delta$-hyperbolic metric space, and let $\alpha:[a,\infty)\rightarrow X$ and $\beta:[b,\infty)\rightarrow X$ be geodesic rays.
\begin{itemize}
    \item We denote the \textit{horoball about $\alpha$} by $H(\alpha)$ and define it as $H(\alpha)=\bigcup\{\beta([b,\infty)):\beta\sim_{6\delta} \alpha,~b\geq a\}$.
    \item We denote the \textit{funnel about $\alpha$} by $F(\alpha)$ and define it as $F(\alpha)=\{x\in X: d(x,\alpha)\leq d(\pi_\alpha(x),\alpha(a))\}$.
\end{itemize}
\end{defn}

\begin{remark}\label{swensonwelldefinedhoroball}
    By Lemma \ref{swensonfellowtravel}, $H(\alpha)$ is well defined.
\end{remark}

Using these definitions, we now construct what it means for a point $x\in\partial_\infty X$ to be a horospherical limit point or a funneled limit point of some subset $A\subseteq X$. Heuristically, $x$ is a horospherical limit point if every horoball around $x$ intersects $A$. The corresponding statement is true for funneled limit points. We also take a moment to define a conical limit point.

\begin{defn}\label{hyperbolicdef2}
Let $X$ be a proper, geodesic $\delta$-hyperbolic space.
\begin{itemize}
    \item Given a point $x\in\partial_\infty X_\0$ and a subset $A\subseteq X$, we say $x$ is a \textit{horospherical limit point of $A$} if, for every geodesic ray $\alpha$ with $\alpha(\infty)=x$, we have $H(\alpha)\cap A\not=\emptyset$.
    \item Given a point $x\in\partial_\infty X_\0$ and a subset $A\subseteq X$, we say $x$ is a \textit{funneled limit point of $A$} if, for every geodesic ray $\alpha$ with $\alpha(\infty)=x$, we have $F(\alpha)\cap A\not=\emptyset$.
    \item Given a point $x\in\partial_\infty X_\0$ and a subset $A\subseteq X$, we say $x$ is a \textit{conical limit point of $A$} if there exists $K>0$ such that, for every geodesic ray $\alpha$ with $\alpha(\infty)=x$, we have $\Nhbd_K(\alpha)\cap A\not=\emptyset$. 
\end{itemize}
\end{defn}

We present here for completeness a relaxed version of a claim in \cite[pg 125]{swenson2001} which shows that every horoball of a geodesic ray contains a funnel of an equivalent geodesic ray in a $\delta$-hyperbolic space.

\begin{lemma}\label{funnel_in_horoball_hyperbolic}
Let $(X,d)$ be a proper, geodesic, $\delta$-hyperbolic metric space, and let $\alpha:[0,\infty)\rightarrow X$ be a geodesic ray. Define $\alpha':[0,\infty)\rightarrow X$ by $\alpha'(t)=\alpha(t+6\delta)$. Then $F(\alpha')\subseteq H(\alpha)$.
\end{lemma}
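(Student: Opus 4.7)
The plan is to show that an arbitrary $x \in F(\alpha')$ lies in the image of some geodesic ray $\beta:[b,\infty)\to X$ with $b \geq 0$ and $\beta \sim_{6\delta} \alpha$, which by definition places $x$ in $H(\alpha)$.

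First I would extract quantitative data from the funnel condition. Let $y = \pi_{\alpha'}(x)$ and write $y = \alpha'(s) = \alpha(s + 6\delta)$ for some $s \geq 0$. The funnel condition gives $d(x, y) \leq d(y, \alpha'(0)) = s$. Combining with the triangle inequality, for every $t \geq s + 6\delta$,
\[
d(x, \alpha(t)) \leq d(x,y) + d(y, \alpha(t)) \leq s + (t - s - 6\delta) = t - 6\delta.
\]
Equivalently, $t - d(x, \alpha(t)) \geq 6\delta$ for all such $t$; this is the key numerical input, and it is precisely here that the $6\delta$ shift defining $\alpha'$ pays off.

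Next I would use properness of $X$ to construct a geodesic ray $\beta_0:[0,\infty)\to X$ with $\beta_0(0) = x$ and $\beta_0(\infty) = \alpha(\infty)$, taking a subsequential limit of geodesic segments from $x$ to $\alpha(n)$ via Arzela--Ascoli. A standard asymptotic fellow-traveling lemma in $\delta$-hyperbolic spaces then produces a shift $b^* \in \mathbb{R}$ and a threshold $T$ with $d(\beta_0(u), \alpha(u + b^*)) \leq 2\delta$ for all $u \geq T$, where $b^* = \lim_{t\to\infty}(t - d(x,\alpha(t)))$. This limit exists because $t - d(x,\alpha(t))$ is nondecreasing in $t$ (by the triangle inequality applied to $\alpha(t_1),\alpha(t_2),x$) and bounded above by $d(x,\alpha(0))$. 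By the previous estimate, $b^* \geq 6\delta \geq 0$.

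Finally, setting $\beta:[b^*,\infty)\to X$ by $\beta(t) = \beta_0(t - b^*)$ yields a geodesic ray with $\beta(b^*) = x$, starting parameter $b^* \geq 0$, and $d(\beta(t), \alpha(t)) \leq 2\delta \leq 6\delta$ for all large $t$, so $\beta \sim_{6\delta} \alpha$. Hence $x \in \beta([b^*,\infty)) \subseteq H(\alpha)$. The main subtlety is the asymptotic fellow-traveling lemma and the identification of the shift $b^*$ with the Busemann-type quantity $\lim(t - d(x,\alpha(t)))$; once that is in hand, the arithmetic of step one is exactly arranged so that the $6\delta$ defining $\alpha'$ produces the buffer needed to keep the starting parameter nonnegative.
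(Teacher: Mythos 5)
Your proof is correct and follows essentially the same route as the paper's: both take a point of $F(\alpha')$, build a geodesic ray from it asymptotic to $\alpha$, parameterize it to $6\delta$-fellow-travel $\alpha$, and use the funnel inequality plus the triangle inequality to force the starting parameter to be nonnegative (your monotone quantity $t-d(x,\alpha(t))$ is just a repackaging of the paper's estimate $T-b=d(\beta(T),p)\leq 6\delta+(T-6\delta)$). The only imprecision is your claim that the shift $b^*$ \emph{equals} $\lim_t\bigl(t-d(x,\alpha(t))\bigr)$ exactly; the standard fellow-traveling lemma only pins $b^*$ down to within the fellow-traveling constant of that limit, but since your step one shows the limit is at least $6\delta$, the conclusion $b^*\geq 0$ survives this slack.
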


\begin{proof}
    See Figure \ref{Figure1}. Let $p\in F(\alpha')$. We construct a geodesic ray $\beta:[b,\infty)\rightarrow X$ such that $\beta\sim_{6\delta}\alpha$ and $\beta(b)=p$: let $\beta_n$ be a geodesic segment which begins at $p$ and ends at $\alpha(n)$. Then, after potentially passing to a subsequence, the $\beta_n$ converge to a geodesic ray $\beta$ by the Arzelà–Ascoli theorem. Then as shown in \cite[pg 427-428]{bridsonhaeflingertextbook}, $\beta\sim_{6\delta}\alpha$. 
    
    Let $q\in \pi_{\alpha'}(p)$ such that $d(\alpha(0),q)=\min\{d(\alpha(0),x):x\in\pi_{\alpha'}(p)\}$, i.e., so that $q$ is the point in $\pi_{\alpha'}(p)$ closest to $\alpha(0)$. Notice that since $p\in F(\alpha')$ we have that $d(p,q)\leq d(q,\alpha'(0))$. Choose $T\geq 6\delta$ so that $q\in [\alpha'(0),\alpha'(T)]$ and so that for all $t\geq T$, we have $d(\alpha(t),\beta(t))<6\delta$. Then
    \begin{align*}
    T-b=
    d(\beta(T),p) &\leq d(\beta(T),\alpha(T))+d(\alpha(T),q)+d(q,p)\\ &\leq 6\delta + d(\alpha'(T-6\delta),q) + d(q,\alpha'(0))=6\delta+(T-6\delta).
    \end{align*}
    This shows that $b\geq0$, and so $p=\beta(b)\in H(\alpha)$.
\end{proof}

\begin{figure}[h]
\centering
\begin{tikzpicture}
        \draw [->, black] (0,0) to (10,0);
        \draw node[label=below:{$\alpha$}] at (7,0) {};
        \draw node[label=left:{$\alpha(0)$},fill,circle,scale=0.5] at (0,0) {};
        \draw node[label=right:{$p=\beta(b)$},fill,circle,scale=0.5] at (5,3) {};
        \draw node[fill,circle,scale=0.5] at (1,0) {};
        \draw node[label=below:{$\alpha(6\delta)$}] at (1,-0.2) {};
        \draw node[label=below:{$=\alpha'(0)$},scale=0.5] at (1,-0.7) {};
        \draw node[label=below:{$\alpha(T)$},fill,circle,scale=0.5] at (9,0) {};
        \draw node[label=below:{$=\alpha'(T-6\delta)$},scale=0.5] at (9,-0.5) {};
        \draw node[label=above:{$\beta(T)$},fill,circle,scale=0.5] at (9,0.5) {};
        \draw [->, black]   (5,3) to[out=-85,in=180] (10,0.5);
        \draw [-, black,dashed]   (-1,0) to[out=90,in=180] (5,5);
        \draw [-, black,dashed]   (-1,0) to[out=-90,in=117] (0,-3);
        \draw node[label=above:{$\beta$}] at (7,0.9) {};
        \draw [-, black] (5,3) to (4,0);
        \draw [-, black] (5,3) to (5,0);
        \draw [-, blue, ultra thick] (4,0) to (5,0);
        \draw node[label=below:{\textcolor{blue}{$\pi_{\alpha'}(p)$}}, text=blue] at (5,0) {};
        \draw node[label=below:{$q$},fill,circle,scale=0.5] at (4,0) {};
        \draw [-, black, dashed] (6,5) to (1,0);
        \draw node[label=right:{$F(\alpha')$},scale=0.5] at (6,4.5) {};
        \draw node[label=right:{$H(\alpha)$},scale=0.5] at (3,4.5) {};
        \draw [-, black, dashed] (4,-3) to (1,0);
        \draw [-, black] (9,0.5) to (9,0);
        \draw node[label=right:{$\leq6\delta$}] at (9,0.25) {};
        \end{tikzpicture}\\
        \caption{Diagram for Lemma \ref{funnel_in_horoball_hyperbolic}}\label{Figure1}

\end{figure}

We also include the complementary statement that every funnel of a geodesic ray contains a horoball of an equivalent geodesic ray.

\begin{lemma}\label{horoball_in_funnel_hyperbolic}
\textit{(\cite[Lemma 5]{swenson2001})} Let $(X,d)$ be a proper, geodesic, $\delta$-hyperbolic metric space, and let $\alpha:[0,\infty)\rightarrow X$ be a geodesic ray. Define $\alpha':[0,\infty)\rightarrow X$ by $\alpha'(t)=\alpha(t+12\delta)$. Then $H(\alpha')\subseteq F(\alpha)$. \hfill $\square$
\end{lemma}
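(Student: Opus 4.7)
My plan is to take an arbitrary $p\in H(\alpha')$, unpack the definition, and verify the funnel inequality for $\alpha$ directly. By definition, there is a geodesic ray $\beta:[b,\infty)\to X$ satisfying $\beta\sim_{6\delta}\alpha'$ with $b\geq 0$, and some $s\geq b$ with $p=\beta(s)$. I fix a nearest point $q\in\pi_\alpha(p)$ and write $D=d(p,q)$ and $t_q=d(q,\alpha(0))$; the task reduces to showing $D\leq t_q$.

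The core idea is to probe $p$ against a far-out reference point on $\alpha$, namely $\alpha(R+12\delta)=\alpha'(R)$ for large $R$. Choose $T$ so that $d(\beta(t),\alpha'(t))\leq 6\delta$ for all $t\geq T$, and pick any $R\geq\max\{s,T,t_q\}$. The triangle inequality along $\beta$ immediately yields the upper bound
\[
d(p,\alpha(R+12\delta))\leq d(\beta(s),\beta(R))+d(\beta(R),\alpha(R+12\delta))\leq (R-s)+6\delta,
\]
while the standard $\delta$-hyperbolic projection estimate applied to $p$, its closest point $q$ on $\alpha$, and $\alpha(R+12\delta)\in\alpha$ supplies a matching lower bound
\[
d(p,\alpha(R+12\delta))\geq D+d(q,\alpha(R+12\delta))-4\delta=D+(R+12\delta-t_q)-4\delta,
\]
where we used $R\geq t_q$ to compute the distance along $\alpha$. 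Combining these two inequalities cancels $R$ and simplifies to $t_q\geq D+s+2\delta\geq D$, putting $p\in F(\alpha)$.

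The main obstacle is really just the bookkeeping of constants: the $12\delta$ shift in the definition of $\alpha'$ must simultaneously absorb the $6\delta$ slack from the horoball definition and the slack (roughly $4\delta$) from the projection estimate, while still leaving a nonnegative residue so the final inequality points the right way. Since $12\geq 6+4$, there is $2\delta$ of room to spare; combined with $s\geq 0$, this closes the argument uniformly for every $p\in H(\alpha')$, including the extremal case $s=b=0$.
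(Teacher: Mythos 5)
Your argument is correct, and it is worth noting that the paper itself offers no proof of this lemma --- it is quoted from \cite[Lemma 5]{swenson2001} with a $\square$ --- so yours is a genuine self-contained verification. Structurally, what you do is exactly the skeleton the paper uses for the Morse generalization (Proposition \ref{funnelscontainhoroballs}) and, in mirror image, for Lemma \ref{funnel_in_horoball_hyperbolic}: probe the point $p$ against a far-out reference point, bound the distance from above by travelling along $\beta$ (picking up the $6\delta$ fellow-travelling slack) and from below by routing through the nearest-point projection, then cancel the large parameter. The one substantive difference is the source of the lower bound: you invoke the classical hyperbolic projection estimate $d(p,y)\geq d(p,q)+d(q,y)-4\delta$ for $q\in\pi_\alpha(p)$, $y\in\alpha$, whereas the paper's Morse-setting proofs cannot use that and instead extract the analogous inequality from quasi-geodesic stability of concatenations like $[y,p]*[p,\alpha(t)]*[\alpha(t),\beta(t)]$. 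In the hyperbolic case your route is the more elementary one.

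Two small points of care. First, the additive constant in the projection estimate depends on which flavor of $\delta$-hyperbolicity is in force; your bookkeeping requires the deficit to be at most $6\delta$ (so that $12\delta \geq 6\delta + \text{deficit}$ with $s\geq 0$), which holds under the standard slim-triangle or four-point conventions, but your advertised $2\delta$ of spare room is thin --- with a deficit of $8\delta$ the inequality would fail to close. Second, since $\pi_\alpha(p)$ is a set, the funnel condition should be checked for the element of $\pi_\alpha(p)$ closest to $\alpha(0)$; your estimate gives $D\leq t_q$ for \emph{every} $q\in\pi_\alpha(p)$, so this is automatic, but it deserves a sentence.
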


The combination of Lemmas \ref{funnel_in_horoball_hyperbolic} and \ref{horoball_in_funnel_hyperbolic} give the following relationship, which was originally stated as a corollary in \cite{swenson2001}.

\begin{cor}\label{limit_points_hyperbolic}
In a proper, geodesic, $\delta$-hyperbolic metric space, the funneled limit points are exactly the horospherical limit points. \hfill $\square$
\end{cor}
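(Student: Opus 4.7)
The plan is to observe that Corollary \ref{limit_points_hyperbolic} follows essentially formally from the two preceding lemmas by reparameterizing the geodesic ray appropriately in each direction.

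First I would show that every horospherical limit point is a funneled limit point. Fix a point $x$ in the visual boundary that is a horospherical limit point of $A$, and let $\alpha:[0,\infty)\rightarrow X$ be any geodesic ray with $\alpha(\infty)=x$. Define $\alpha'(t)=\alpha(t+12\delta)$; then $\alpha'(\infty)=x$ as well, so by the horospherical hypothesis applied to $\alpha'$ we have $H(\alpha')\cap A\neq\emptyset$. Lemma \ref{horoball_in_funnel_hyperbolic} gives $H(\alpha')\subseteq F(\alpha)$, hence $F(\alpha)\cap A\neq\emptyset$. Since $\alpha$ was an arbitrary ray representing $x$, we conclude that $x$ is a funneled limit point.

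Next I would prove the reverse implication using Lemma \ref{funnel_in_horoball_hyperbolic} in exactly the symmetric way. Suppose $x$ is a funneled limit point of $A$ and let $\alpha$ be any geodesic ray with $\alpha(\infty)=x$. Setting $\alpha'(t)=\alpha(t+6\delta)$, we still have $\alpha'(\infty)=x$, so $F(\alpha')\cap A\neq\emptyset$ by the funneled hypothesis. Lemma \ref{funnel_in_horoball_hyperbolic} then gives $F(\alpha')\subseteq H(\alpha)$, so $H(\alpha)\cap A\neq\emptyset$ and $x$ is a horospherical limit point.

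There is no real obstacle here; the only subtlety worth double-checking is that in both directions the shifted ray $\alpha'$ truly represents the same boundary point $x$ (which it does, since a constant-time reparametrization of a geodesic ray is asymptotic to the original), and that the quantifier structure in the definitions of horospherical and funneled limit points allows us to apply the hypothesis to the shifted ray rather than the given one. Once that is noted, the two lemmas do all the work.
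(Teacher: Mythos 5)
Your proposal is correct and matches the paper's intended argument exactly: the corollary is stated as an immediate consequence of Lemmas \ref{funnel_in_horoball_hyperbolic} and \ref{horoball_in_funnel_hyperbolic}, applied to the shifted ray in each direction just as you describe. Your remark about the quantifier structure (the hypothesis holds for \emph{every} ray representing $x$, hence in particular for the shifted ray) is the right point to check, and it goes through.
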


\subsection{Morse Boundary and Morse Rays}

The fact that $X$ is $\delta$-hyperbolic is an important part of the definition of a horoball in Definition \ref{hyperbolicdef1}, as we note in Remark \ref{swensonwelldefinedhoroball}. Since our main goal of Theorem \ref{MyMainTheorem} does not have $X$ as a $\delta$-hyperbolic space, we will need to develop some analog to Lemma \ref{swensonfellowtravel} which does not use hyperbolicity. Or strategy for creating such an analog will be to use properties of Morse rays. We begin by recalling the definition.

\begin{defn} \label{MorseRayDefinition}
\textit{(\cite[Definition 1.3]{cordes_boundry2016})} A (quasi)-geodesic $\gamma$ in a metric space is called \textit{$N$-Morse}, where $N$ is a function $[1,\infty)\times[0,\infty)\rightarrow[0,\infty)$, if for any $(K,C)$-quasi-geodesic $\varphi$ with endpoints on $\gamma$, we have $\varphi\subset \Nhbd_{N(K,C)}(\gamma)$. We call the function $N$ a \textit{Morse gauge}. We say $\gamma$ is \textit{Morse} if there exists a Morse gauge $N$ so that $\gamma$ is $N$-Morse.
\end{defn}

Comparing this definition with Lemma \ref{MorseLemma} shows that Morse rays are the rays in $X$ which have hyperbolic like properties. In the next definition, Cordes uses the visual boundary (see Definition \ref{visual boundary} above) to construct a boundary on proper, geodesic $X$ without requiring $X$ to be hyperbolic.

\begin{defn}\textit{(\cite{cordes_boundry2016, Cordes_Hume_2016_Stability})} Given a Morse gauge $N$ and a basepoint $\0\in X$, the \textit{$N$-Morse stratum}, denoted $X_{\0}^N$, is defined as the set of all points $x$ such that $[\0,x]$ is an $N$-Morse geodesic. Each such stratum is $\delta$-hyperbolic for $\delta$ depending only on $N$ \cite[Proposition 3.2]{Cordes_Hume_2016_Stability}, and thus has a well defined visual boundary $\partial_\infty X^N_{\0}$. If $\mathcal{M}$ is the set of all Morse gauges, then there is a natural partial order on $\mathcal{M}$: $N\leq N'$ if $N(K,C)\leq N'(K,C)$ for all $K$ and $C$. Note the natural inclusion $\partial_\infty X_{\0}^N \hookrightarrow \partial_\infty X_{\0}^{N'}$ is continuous whenever $N\leq N'$ by \cite[Corollary 3.2]{cordes_boundry2016}. We define the \textit{Morse boundary based at $\0$} as
$$ \partial X_{\0} = \varinjlim_\mathcal{M} \partial_\infty X_{\0}^N$$
with the induced direct limit topology. Given a Morse geodesic ray $\alpha$, we denote the associated point in $\partial X_{\0}$ as $\alpha(\infty)$.
\end{defn}

\begin{remark}
Often when studying the Morse boundary, the basepoint is suppressed from the notation, as the Morse boundary is basepoint independent \cite[Proposition 2.5]{cordes_boundry2016}. However, we will often make use of the basepoint explicitly in the arguments to come, thus we keep it in the notation.
\end{remark}

\begin{remark}
    When $X$ is a $\delta$-hyperbolic space, $\partial_\infty X_\0 = \partial X_\0$. This is because, by the Morse Lemma (Lemma \ref{MorseLemma}) there exists a maximum Morse gauge $N$ so that $X=X_\0^N$. See \cite{cordes_boundry2016} for details.
\end{remark}

The following fact states that subrays of Morse rays are also Morse. This will be especially useful in Section 3, as many of the arguments which describe the relationships between horoballs, funnels, and cones require restriction to a subray, as illustrated in the proof of Lemma \ref{funnel_in_horoball_hyperbolic}.

\begin{lemma}\label{MorseSubraysAreMorse} (\cite[Lemma 3.1]{liu_2021})
    Let $X$ be a geodesic metric space. Let $\alpha:I\rightarrow X$ be an $N$-Morse $(\lambda,\epsilon)$-quasi-geodesic where $I$ is an interval of $\mathbb{R}$. Then for any interval $I'\subseteq I$, the $(\lambda,\epsilon)$-quasi-geodesic $\alpha'=\alpha|_{I'}$ is $N'$-Morse where $N'$ depends only on $\lambda,\epsilon$, and $N$. \hfill $\square$
\end{lemma}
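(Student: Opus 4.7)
The plan is to deduce the Morse property of $\alpha'=\alpha|_{I'}$ directly from that of $\alpha$ by bounding how far the closest-point projections from $\varphi$ to $\alpha$ can lie outside the parameter interval $I'$. Let $\varphi:J\to X$ be any $(K,C)$-quasi-geodesic whose endpoints $p=\alpha(s_1)$ and $q=\alpha(s_2)$ lie on $\alpha'$, i.e., $s_1,s_2\in I'$. Because $\alpha'\subseteq\alpha$, the $N$-Morse hypothesis on $\alpha$ already yields $\varphi\subseteq\Nhbd_{N(K,C)}(\alpha)$, so every $x\in\varphi$ admits a companion $y=\alpha(t)\in\alpha$ with $d(x,y)\leq N(K,C)$. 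If $t\in[s_1,s_2]\subseteq I'$, then $y\in\alpha'$ and the desired bound $d(x,\alpha')\leq N(K,C)$ holds; the substance of the lemma is then to uniformly bound how far $t$ can drift outside $[s_1,s_2]$.

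Assume without loss of generality $t<s_1$; the case $t>s_2$ is symmetric. Writing $p=\varphi(u_1)$ and $x=\varphi(u)$, I would build a candidate concatenated path $\sigma$ from $p$ to $y$ by first following the sub-quasi-geodesic $\varphi|_{[u_1,u]}$ from $p$ to $x$ and then a geodesic of length at most $N(K,C)$ from $x$ to $y$. The short geodesic bridge prevents uncontrolled backtracking, and a direct estimate shows that $\sigma$, suitably reparametrized, is a $(K',C')$-quasi-geodesic from $p$ to $y$ with constants depending only on $K,C,$ and $N(K,C)$. Since $p$ and $y$ both lie on $\alpha$, one can now compare $\sigma$ with the $(\lambda,\epsilon)$-sub-quasi-geodesic $\alpha|_{[t,s_1]}$ joining the same two points. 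Applying the $N$-Morse property to $\sigma$ together with the quasi-isometric embedding property of $\alpha$ forces a uniform bound $s_1-t\leq D(N,\lambda,\epsilon,K,C)$: intuitively, if $s_1-t$ were arbitrarily large, then $\varphi|_{[u_1,u]}$ would have to stretch from $p$ to a point within $N(K,C)$ of $y$ while lying in the $N(K,C)$-neighborhood of $\alpha$, and the Morse-controlled geometry of $\alpha$ near $p$ forbids closest-point projections on $\alpha$ from drifting arbitrarily far past $p$ in the reverse direction along $\alpha$.

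With such a constant $D$ in hand, for any $x\in\varphi$ whose companion $y=\alpha(t)$ satisfies $t\notin I'$, the distance from $x$ to the nearest endpoint of $\alpha'$ is at most $N(K,C)+\lambda D+\epsilon$; setting $N'(K,C):=N(K,C)+\lambda D+\epsilon$ then yields the desired Morse gauge depending only on $N$, $\lambda$, and $\epsilon$. The main technical obstacle is the concatenation step: arbitrary concatenations of quasi-geodesics need not be quasi-geodesic because of possible backtracking, so one must carefully use the smallness of the geodesic bridge $[x,y]$ (length at most $N(K,C)$) together with the quasi-isometric embedding property of $\varphi|_{[u_1,u]}$ to verify that $\sigma$ is genuinely a quasi-geodesic with constants controlled solely by the input data. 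Once this is established, the rest of the argument is a routine application of the hypotheses.
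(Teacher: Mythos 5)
The paper does not prove this lemma at all --- it is quoted from Liu with the proof omitted --- so your proposal can only be judged on its own merits, and there it has a genuine gap at its central step. The overall shape is right: reduce to bounding how far the parameter $t$ of a companion point $\alpha(t)$ can drift below $s_1$ (or above $s_2$), and the concatenation $\sigma=\varphi|_{[u_1,u]}*[x,y]$ is indeed a $(K,\,C+2N(K,C))$-quasi-geodesic since the bridge has length at most $N(K,C)$, so that step is fine. But the uniform bound $s_1-t\le D(N,\lambda,\epsilon,K,C)$ is exactly what you assert rather than prove. Applying the $N$-Morse property to $\sigma$ only yields $\sigma\subseteq\Nhbd_{N(K,C+2N(K,C))}(\alpha)$, which is information you essentially already had and which says nothing about \emph{where along} $\alpha$ the points of $\sigma$ land. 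Likewise, combining the efficiency of the quasi-geodesic $\varphi$ (namely $d(p,x)+d(x,q)\le K^2d(p,q)+(K^2+2)C$) with the $(\lambda,\epsilon)$-quasi-isometric-embedding property of $\alpha$ only gives $s_1-t$ bounded by a linear function of $s_2-s_1$, because the multiplicative losses $K^2\lambda^2\ge1$ prevent cancellation; so "the quasi-isometric embedding property of $\alpha$" cannot by itself close the gap, and the sentence about Morse geometry "forbidding drift" is a restatement of the claim, not an argument.

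What is actually needed is the \emph{reverse} containment $\alpha|_{[t,s_1]}\subseteq\Nhbd_{M}(\sigma)$, obtained from the standard connectedness (coarse intermediate value) argument: for each $e\in[t,s_1]$ split $\alpha$ at $e$, note that every point of $\sigma$ is uniformly close to one of the two halves while the endpoints of $\sigma$ lie on opposite halves, and extract a transition point of $\sigma$ uniformly close to $\alpha(e)$. Even then one more idea is required, since knowing $\alpha|_{[t,s_1]}$ lies in a neighborhood of $\varphi|_{[u_1,u]}\cup[x,y]$ does not bound its length. A workable fix is to run the same argument with the other half $\sigma'=[y,x]*\varphi|_{[u,u_2]}$ (a quasi-geodesic from $\alpha(t)$ to $\alpha(s_2)$): every $\alpha(e)$ with $e\in[t,s_1]$ and $e-t$ large must then be uniformly close to both $\varphi([u_1,u])$ and $\varphi([u,u_2])$, and the quasi-isometric embedding property of $\varphi$ forces such points to be uniformly close to the junction $\varphi(u)=x$, hence to $\alpha(t)$, which bounds $e-t$ and in particular $s_1-t$ by a constant depending only on $N,\lambda,\epsilon,K,C$. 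With that constant in hand, your final bookkeeping ($N'(K,C)=N(K,C)+\lambda D+\epsilon$) is correct.
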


We now present a combination of statements which will show that, given one Morse ray and another ray which fellow-travels with the first, then eventually the fellow-travelling constant is determined only by the Morse gauge of the first ray. We begin by recalling two relevant facts from \cite{cordes_boundry2016}.

\begin{prop}\label{thatonecordesprop} \textit{(\cite[Proposition 2.4]{cordes_boundry2016})}
Let $X$ be a geodesic metric space. Let $\alpha:[0,\infty)\rightarrow X$ be an $N$-Morse geodesic ray. Let $\beta:[0,\infty)\rightarrow$ be a geodesic ray such that $d(\alpha(t),\beta(t))<K$ for $t\in[A,A+D]$ for some $A\in[0,\infty)$ and $D\geq 6K$. Then for all $t\in[A+2K,A+D-2K]$, \[\hfill d(\alpha(t),\beta(t))<4N(1,2N(5,0))+2N(5,0)+d(\alpha(0),\beta(0)).\eqno\qed\] 
\end{prop}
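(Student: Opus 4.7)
The bound $4N(1,2N(5,0))+2N(5,0)+d(\alpha(0),\beta(0))$ signals the strategy: apply the Morse property of $\alpha$ twice, first to a $(5,0)$-quasi-geodesic to obtain a coarse tracking constant $N(5,0)$, and then to a $(1,2N(5,0))$-quasi-geodesic to upgrade to parametric tracking. Write $d_0:=d(\alpha(0),\beta(0))$; it enters only through an initial bridge segment $[\alpha(0),\beta(0)]$ and contributes only an additive term at the end.

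My plan is as follows. First, I build the path
\[ \sigma := [\alpha(A),\beta(A)]\cdot\beta|_{[A,A+D]}\cdot[\beta(A+D),\alpha(A+D)], \]
which has endpoints on $\alpha$ at distance exactly $D$ and total length at most $D+2K$. The hypothesis $D\geq 6K$ keeps this end-to-end ratio bounded by $4/3$, and I will show (after a mild reparametrization that absorbs the short bridge segments at the ends) that $\sigma$ is a $(5,0)$-quasi-geodesic. The $N$-Morse property of $\alpha$ then yields $\sigma\subseteq\Nhbd_{N(5,0)}(\alpha)$, so for every $s\in[A,A+D]$ there exists some $\tau(s)\geq 0$ satisfying $d(\beta(s),\alpha(\tau(s)))\leq N(5,0)$.

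Next I upgrade this coarse tracking to parametric tracking via a second Morse application. For $t\in[A+2K,A+D-2K]$, I form the path
\[ \rho := [\alpha(0),\beta(0)]\cdot\beta|_{[0,t]}\cdot[\beta(t),\alpha(\tau(t))], \]
whose endpoints $\alpha(0)$ and $\alpha(\tau(t))$ both lie on $\alpha$. Comparing $\alpha|_{[0,\tau(s)]}$ (length $\tau(s)$) against an analogous competing path through $\beta$ of length at most $d_0+s+N(5,0)$, and running the same comparison with $\alpha$ and $\beta$ swapped, yields the parametric estimate $|\tau(s)-s|\leq d_0+N(5,0)$. Hence $\rho$ has length at most $d_0+t+N(5,0)$ between endpoints at distance at least $t-d_0-N(5,0)$, so $\rho$ is a $(1,2N(5,0)+2d_0)$-quasi-geodesic with endpoints on $\alpha$, and the Morse property bounds its distance from $\alpha$ by $N(1,2N(5,0)+2d_0)$. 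A triangle inequality involving $\beta(t)$, $\alpha(\tau(t))$, and $\alpha(t)$, together with the parametric control $|\tau(t)-t|\leq d_0+N(5,0)$, produces the claimed bound; the factor $4$ in front of $N(1,2N(5,0))$ and the separation of $d_0$ from the Morse gauge absorb the remaining packaging via monotonicity of $N$ and a case split on the relative sizes of $d_0$ and $N(5,0)$.

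The main obstacle is verifying rigorously that $\sigma$ is a genuine $(5,0)$-quasi-geodesic on all subpaths, not merely end-to-end. The potentially problematic subpaths are those beginning on the short bridge $[\alpha(A),\beta(A)]$ and entering $\beta$ only slightly: on these the local efficiency can drop sharply, and the naive arclength parametrization fails to satisfy the $(5,0)$ condition. Handling them uses $D\geq 6K$ together with a careful reparametrization (for example, traversing the bridge at higher speed, or replacing it with a slightly longer bridge to a point of $\beta$ a bounded distance into the segment), where the $6K$ headroom on both sides is precisely what keeps the ratios under $5$. Once $\sigma$ is secured as a $(5,0)$-quasi-geodesic with endpoints on $\alpha$, the two Morse applications and final triangle inequality are essentially routine.
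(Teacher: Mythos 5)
This proposition is quoted in the paper directly from \cite[Proposition 2.4]{cordes_boundry2016} with no proof supplied, so I am judging your argument on its own merits. Your overall architecture is the right one --- a Morse application at parameters $(5,0)$ to get coarse tracking of $\beta$ by $\alpha$, then a parameter comparison through $\alpha(0)$ and $\beta(0)$ that produces the additive $d(\alpha(0),\beta(0))$ --- but the load-bearing claim of your Step 1 is false. The concatenation $\sigma=[\alpha(A),\beta(A)]\cdot\beta|_{[A,A+D]}\cdot[\beta(A+D),\alpha(A+D)]$ cannot in general be parametrized as a $(\lambda,0)$-quasi-geodesic for \emph{any} $\lambda$: a quasi-geodesic with additive constant $0$ must be injective, while the bridge $[\alpha(A),\beta(A)]$ can backtrack along $\beta$. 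Concretely, take $X=[-K,\infty)\subseteq\mathbb{R}$, $\alpha(t)=t$, $\beta(t)=t-K/2$ (all hypotheses hold, and $\alpha$ is Morse). Then $\sigma$ runs from $\alpha(A)=A$ down to $\beta(A)=A-K/2$ and back up through the point $A$ again, so any parametrization $\varphi$ has $s_1<s'<s_2$ with $\varphi(s_1)=\varphi(s_2)=A$ and $\varphi(s')=A-K/2$; the upper bound forces $|s_1-s'|\geq K/(2\lambda)>0$, while the lower bound forces $s_1=s_2$, a contradiction. Neither proposed repair addresses this: re-speeding the bridge does not help when two distinct parameters map to the same point, and moving the bridge endpoint to $\beta(A+cK)$ only relocates the possible backtracking to the new junction. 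The zero additive constant is not negotiable, because the final bound must be independent of $K$.

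The way the hypotheses $t\in[A+2K,A+D-2K]$ and $D\geq 6K$ are actually meant to be used is by sampling rather than concatenating: for the given $t$, the three-point map $0\mapsto\alpha(A)$, $t-A\mapsto\beta(t)$, $D\mapsto\alpha(A+D)$ is a $(2,0)$-quasi-geodesic (hence $(5,0)$) with endpoints on $\alpha$, because $t-A\geq 2K$ and $A+D-t\geq 2K$ keep the $\pm K$ errors below half of each parameter gap; more generally one uses a chain of points of $\beta$ spaced at least $2K$ apart. (You should then either verify that the Morse gauge applies to quasi-geodesics with discrete domains under the convention in force, or interpolate carefully.) This yields $\tau$ with $d(\beta(t),\alpha(\tau))\leq N(5,0)$, and your own parameter estimate $|\tau-t|\leq d(\alpha(0),\beta(0))+N(5,0)$ plus one triangle inequality already gives $d(\alpha(t),\beta(t))\leq d(\alpha(0),\beta(0))+2N(5,0)$, which implies the stated bound. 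So your path $\rho$ is unnecessary --- which is fortunate, because it is also problematic: its additive quasi-geodesic constant $2N(5,0)+2d_0$ places $d_0$ \emph{inside} the Morse gauge, and $N(1,2N(5,0)+2d_0)$ cannot be converted into $4N(1,2N(5,0))$ plus an additive multiple of $d_0$ for a general gauge $N$; monotonicity of $N$ runs in the wrong direction for that, and no case split on the sizes of $d_0$ and $N(5,0)$ fixes it.
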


\begin{cor}\label{cordescornumber2} \textit{(\cite[Corollary 2.6]{cordes_boundry2016})}
Let $X$ be a geodesic metric space. Let $\alpha:[0,\infty)\rightarrow X$ be an $N$-Morse geodesic ray. Let $\beta:[0,\infty)\rightarrow$ be a geodesic ray such that $d(\alpha(t),\beta(t))<K$ for all $t\in[0,\infty)$ (i.e. $\beta(\infty)=\alpha(\infty)$). Then for all $t\in[2K,\infty)$, \[d(\alpha(t),\beta(t))<\max\{4N(1,2N(5,0))+2N(5,0),8N(3,0)\}+d(\alpha(0),\beta(0)).\eqno\qed\]
\end{cor}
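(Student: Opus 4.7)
The plan is to deduce this corollary directly from Proposition \ref{thatonecordesprop} by taking $A = 0$ and letting $D$ tend to infinity. Because the hypothesis $d(\alpha(t),\beta(t)) < K$ is assumed on the entire half-line $[0,\infty)$, the proposition's requirement $D \geq 6K$ can be met by taking $D$ as large as we please, regardless of the size of $K$.

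Concretely, fix $t \in [2K, \infty)$ and choose any $D \geq \max\{6K,\; t + 2K\}$. Applying Proposition \ref{thatonecordesprop} with $A = 0$ and this value of $D$ places $t$ inside the interval $[A+2K,\, A+D-2K] = [2K,\, D-2K]$, so the proposition delivers
$$ d(\alpha(t),\beta(t)) \;<\; 4N(1, 2N(5,0)) + 2N(5,0) + d(\alpha(0),\beta(0)). $$
This bound depends only on the Morse gauge $N$ and on the initial displacement $d(\alpha(0),\beta(0))$ — in particular not on $D$ or on $t$ — so it persists for every $t \in [2K, \infty)$. Since
$$ 4N(1, 2N(5,0)) + 2N(5,0) \;\leq\; \max\bigl\{4N(1, 2N(5,0)) + 2N(5,0),\; 8N(3,0)\bigr\}, $$
the inequality claimed in the corollary follows at once.

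The only point that requires verification is that the constant supplied by Proposition \ref{thatonecordesprop} is genuinely independent of $D$ and of the particular $t$ chosen within $[A+2K, A+D-2K]$; this is visible directly from the form of the conclusion of the proposition. The main ``obstacle'' is therefore essentially bookkeeping: matching the intervals $[A+2K,\,A+D-2K]$ to the desired range $[2K,\infty)$ in the limit $A = 0$, $D \to \infty$, and then absorbing the proposition's bound into the maximum appearing in the corollary. No new geometric input beyond Proposition \ref{thatonecordesprop} is required; the $8N(3,0)$ term in the maximum is simply a slack that can be ignored for the purposes of deriving the stated bound.
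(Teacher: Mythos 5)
Your derivation is correct: since the fellow-travelling hypothesis holds on all of $[0,\infty)$, you may indeed apply Proposition \ref{thatonecordesprop} with $A=0$ and $D$ arbitrarily large, and the resulting bound is independent of $D$ and $t$, so it holds uniformly on $[2K,\infty)$. The paper itself states this corollary without proof (it is quoted from \cite[Corollary 2.6]{cordes_boundry2016}), so there is no internal argument to compare against; your deduction is a valid self-contained justification, and in fact yields the slightly sharper constant $4N(1,2N(5,0))+2N(5,0)+d(\alpha(0),\beta(0))$, with the $8N(3,0)$ term in the stated maximum being harmless slack (in Cordes's original proof that term arises from a separate case, which your route avoids because the hypothesis here is already the global $K$-fellow-travelling bound).
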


The proof of Proposition \ref{thatonecordesprop}, as presented in \cite{cordes_boundry2016}, shows the following additional fact:

\begin{cor}\label{cordescornumber1} Let $X$ be a geodesic metric space. Let $\alpha:[0,\infty)\rightarrow X$ be an $N$-Morse geodesic ray. Let $\beta:[0,\infty)\rightarrow$ be a geodesic ray such that $d(\alpha(t),\beta(t))<K$ for $t\in[A,A+D]$ for some $A\in[0,\infty)$ and $D\geq 6K$. Then there exists $x,y\in[0,A+2K]$ such that $d(\alpha(y),\beta(x))<N(5,0)$.
\end{cor}

\begin{proof}
    The value $x$ from the first paragraph of the proof in \cite[Proposition 2.4]{cordes_boundry2016} satisfies $x\in[\max\{0,A-2K\},A+2K]$, and as $[\max\{0,A-2K\},A+2K]\subseteq[0,A+2K]$, we get $x\in[0,A+2K]$. The third to last paragraph defines $y$ so that $\alpha(y)\in\pi_\alpha(\beta(x))$, and shows $y\leq A+2K$ and that $d(\alpha(y),\beta(x))<N(5,0)$.
\end{proof}

Corollaries \ref{cordescornumber1} and \ref{cordescornumber2} combine to give the following generalization of \cite[Chapter 3, Lemma 3.3]{bridsonhaeflingertextbook}.

\begin{prop}\label{betterprop2.4}
Let $X$ be a geodesic metric space. Let $\alpha:[0,\infty)\rightarrow X$ be an $N$-Morse geodesic ray. Let $\beta:[0,\infty)\rightarrow X$ be a geodesic ray such that $d(\alpha(t),\beta(t))<K$ for all $t\in[0,\infty)$ (i.e. $\beta(\infty)=\alpha(\infty)$). Then there exists $T_1,T_2>0$ such that for all $t\in[0,\infty)$, \[d(\alpha(T_1+t),\beta(T_2+t))<\max\{4N(1,2N(5,0))+2N(5,0),8N(3,0)\}+N(5,0).\eqno\qed\]
\end{prop}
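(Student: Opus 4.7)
The plan is to combine Corollaries \ref{cordescornumber1} and \ref{cordescornumber2}: the former produces a pair of parameters at which $\alpha$ and $\beta$ are close (within $N(5,0)$ of each other), and shifting both rays to start near that close pair allows the latter corollary to upgrade the conclusion to a bound independent of $K$.

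First, I would apply Corollary \ref{cordescornumber1} with $A = 0$ and $D = 6K$: since $d(\alpha(t), \beta(t)) < K$ for all $t \in [0, \infty)$, the hypothesis certainly holds on $[0, 6K]$, producing parameters $x, y \in [0, 2K]$ with $d(\alpha(x), \beta(y)) < N(5,0)$. Define the shifted geodesic rays $\alpha'(t) := \alpha(x+t)$ and $\beta'(t) := \beta(y+t)$ on $[0, \infty)$; by Lemma \ref{MorseSubraysAreMorse} (applied with $\lambda = 1, \epsilon = 0$), $\alpha'$ is again Morse with a gauge controlled by the original $N$, and by construction $d(\alpha'(0), \beta'(0)) < N(5,0)$.

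Next, I would verify the fellow-traveling hypothesis of Corollary \ref{cordescornumber2} for $\alpha'$ and $\beta'$. The triangle inequality gives, for every $t \geq 0$,
$$ d(\alpha'(t), \beta'(t)) \leq d(\alpha(x+t), \beta(x+t)) + d(\beta(x+t), \beta(y+t)) < K + |x-y| \leq 3K, $$
so $\alpha'$ and $\beta'$ fellow-travel with constant $3K$. Applying Corollary \ref{cordescornumber2} then yields, for every $t \in [6K, \infty)$,
$$ d(\alpha'(t), \beta'(t)) < \max\{4N(1, 2N(5,0)) + 2N(5,0), 8N(3,0)\} + d(\alpha'(0), \beta'(0)), $$
and the final summand is bounded above by $N(5, 0)$ by Step 1. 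Taking $T_1 := x + 6K$ and $T_2 := y + 6K$ rewrites this as the desired inequality valid for every $t \in [0, \infty)$, since $\alpha(T_1 + t) = \alpha'(6K + t)$ and $\beta(T_2 + t) = \beta'(6K + t)$.

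The main subtlety lies in the Morse-gauge bookkeeping: strictly speaking, the subray $\alpha'$ is only $N'$-Morse for some $N'$ controlled by $N$ via Lemma \ref{MorseSubraysAreMorse}, so the Morse-gauge expression in the final bound should be read with respect to this slightly modified gauge. Once that interpretation is agreed upon, the remainder of the argument is routine: Corollary \ref{cordescornumber1} locates the close pair $(\alpha(x), \beta(y))$, the triangle inequality absorbs the shift discrepancy $|x - y| \leq 2K$ into a fellow-traveling constant of $3K$, and Corollary \ref{cordescornumber2} then delivers the $N$-controlled bound on $[6K, \infty)$.
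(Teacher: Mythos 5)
Your proposal is correct and follows essentially the same route as the paper's proof: apply Corollary \ref{cordescornumber1} to locate $x,y$ with $d(\alpha(x),\beta(y))<N(5,0)$, shift both rays to start there, and feed the shifted rays into Corollary \ref{cordescornumber2}. You simply fill in details the paper leaves implicit (the $3K$ fellow-traveling constant for the shifted rays, the explicit choice of $T_1,T_2$, and the Morse-gauge bookkeeping for the subray via Lemma \ref{MorseSubraysAreMorse}).
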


\begin{proof}
    By Corollary \ref{cordescornumber1}, there exists $x,y\geq0$ so that $d(\alpha(x),\beta(y))<N(5,0)$. Define $\alpha'(t)=\alpha(x+t)$ and $\beta'(t)=\beta(y+t)$, and note in particular that $\alpha'(0)=\alpha(x)$ and $\beta'(0)=\beta(y)$. Applying Corollary \ref{cordescornumber2} to $\alpha'$ and $\beta'$ produces the desired result.
\end{proof}
For convenience, we will denote \[\delta_N=\max\{4N(1,2N(5,0))+2N(5,0),8N(3,0)\}+N(5,0).\] 

Using this notation, Proposition \ref{betterprop2.4} leads to the following generalization of \cite[Lemma 4]{swenson2001}.

\begin{cor}\label{betterswensonlemma4}
Let $X$ be a geodesic metric space. Let $\alpha:[0,\infty)\rightarrow X$ be an $N$-Morse geodesic ray. Let $\beta:[0,\infty)\rightarrow X$ be a geodesic ray such that $\beta(\infty)=\alpha(\infty)$. Then there exists $a\in\mathbb{R}$ and an isometry $\rho:[a,\infty)\rightarrow[0,\infty)$ so that $\alpha\sim_{\delta_N}\beta\circ\rho$.
\end{cor}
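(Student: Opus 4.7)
The plan is to extract this as an essentially formal consequence of Proposition \ref{betterprop2.4}. That proposition already supplies the analytic content: there exist $T_1, T_2 > 0$ so that $d(\alpha(T_1 + t), \beta(T_2 + t)) < \delta_N$ for every $t \geq 0$. The only remaining work is a bookkeeping step that repackages this shifted-index inequality as a reparametrization of $\beta$ by an isometry $\rho$ in the sense required by the statement.

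To carry this out, I would set $a := T_1 - T_2 \in \mathbb{R}$ and define $\rho : [a,\infty) \to [0,\infty)$ by $\rho(s) = s - a$; this is the unique orientation-preserving (and indeed only) isometry between these two half-lines. Then $(\beta \circ \rho)(s) = \beta(s - T_1 + T_2)$. Substituting $s = T_1 + t$ for $t \geq 0$, the estimate from Proposition \ref{betterprop2.4} becomes $d(\alpha(s), (\beta \circ \rho)(s)) < \delta_N$ for every $s \geq T_1$. Taking $T := T_1$ in the definition of asymptotic fellow-travel, this is precisely $\alpha \sim_{\delta_N} \beta \circ \rho$.

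There is no real obstacle here, since all the substantive work has been done in Proposition \ref{betterprop2.4} and in the choice of the constant $\delta_N$; the deduction of Corollary \ref{betterswensonlemma4} amounts to a relabeling of parameters. The only mild subtlety worth flagging is that $a$ may be negative (when $T_1 < T_2$), so that the domain $[a,\infty)$ strictly contains $[0,\infty)$; this is permitted by the statement, which only requires $a \in \mathbb{R}$, and it causes no difficulty with the fellow-travel condition, which is only required to hold from some $T$ onward rather than on all of $[a,\infty)$.
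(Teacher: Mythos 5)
Your proof is correct and follows the same route as the paper: both invoke Proposition \ref{betterprop2.4} to obtain $T_1,T_2$ and then take $\rho$ to be the unique isometry of half-lines sending $T_1$ to $T_2$ (your explicit $a=T_1-T_2$, $\rho(s)=s-a$ is exactly that map). The remark about the sign of $a$ is a harmless extra observation; nothing further is needed.
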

\begin{proof}
    Apply Proposition \ref{betterprop2.4} to find $T_1,T_2>0$ so that for all $t\in[0,\infty)$, $d(\alpha(T_1+t),\beta(T_2+t))<\delta_N$. Then let $\rho:[a,\infty)\rightarrow[0,\infty)$ be the unique isometry such that $\rho(T_1)=T_2$.
\end{proof}

\begin{prop}\label{ParallelRaysWithSameStartareMorse}
    Suppose $\alpha:[a,\infty)\rightarrow X$ is an $N$-Morse geodesic ray and $\beta:[b,\infty)\rightarrow X$ is a geodesic ray such that $\beta \sim_{\delta_N}\alpha$ and $\alpha(a)=\beta(b)$. Then $\beta$ is $M$-Morse where $M$ depends only on $N$.
\end{prop}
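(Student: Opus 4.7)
My plan is to leverage the Morse property of $\alpha$ to first control $\beta$ globally, then to control any quasi-geodesic with endpoints on $\beta$. After reparameterizing so that $a=b=0$, the fellow-travelling hypothesis gives some $T\geq 0$ with $d(\alpha(t),\beta(t))\leq \delta_N$ for all $t\geq T$. The key technical step is to upgrade this \emph{eventual} closeness into a \emph{uniform} parameterized bound depending only on $N$.

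To do so, fix $t^*\geq T$ and consider the L-shaped concatenation
\[
\gamma_{t^*} \;:=\; \beta\big|_{[0,t^*]} \;\ast\; [\beta(t^*),\alpha(t^*)],
\]
parameterized by arclength as a path from $\alpha(0)=\beta(0)$ to $\alpha(t^*)$. Since $d(\beta(t^*),\alpha(t^*))\leq \delta_N$, a direct triangle inequality check shows that $\gamma_{t^*}$ is a $(1,2\delta_N)$-quasi-geodesic with endpoints on $\alpha$, so the $N$-Morse hypothesis gives $\gamma_{t^*}\subseteq \Nhbd_{N(1,2\delta_N)}(\alpha)$. Letting $t^*\to\infty$ yields $\beta\subseteq \Nhbd_{N(1,2\delta_N)}(\alpha)$. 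Since $\alpha$ and $\beta$ are unit-speed with a common basepoint, for each $t\geq 0$ any parameter $s$ realizing $d(\beta(t),\alpha(s))\leq N(1,2\delta_N)$ satisfies $|s-t|\leq N(1,2\delta_N)$ by the triangle inequality; a second triangle inequality then gives $d(\alpha(t),\beta(t))\leq R$ for all $t\geq 0$, where $R := 2N(1,2\delta_N)$ depends only on $N$.

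With this uniform bound in hand, let $\varphi:[s_1,s_2]\to X$ be any $(K,C)$-quasi-geodesic with $\varphi(s_i)=\beta(t_i)$. Prepending $[\alpha(t_1),\beta(t_1)]$ and appending $[\beta(t_2),\alpha(t_2)]$ produces a new path $\psi$ from $\alpha(t_1)$ to $\alpha(t_2)$; a routine case analysis shows $\psi$ is a $(K',C')$-quasi-geodesic with constants depending only on $K,C,R$, and hence only on $K,C,N$. The Morse property of $\alpha$ then forces $\psi\subseteq \Nhbd_{N(K',C')}(\alpha)$, and combining with $\alpha\subseteq \Nhbd_{R}(\beta)$ yields $\varphi\subseteq \Nhbd_{N(K',C')+R}(\beta)$. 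Thus $\beta$ is $M$-Morse for $M(K,C):=N(K',C')+R$, which depends only on $N$.

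The main obstacle is that the fellow-travelling threshold $T$ in the hypothesis depends on the particular pair $(\alpha,\beta)$, not on $N$ alone, so the bound $\delta_N$ does not directly yield a Hausdorff bound uniform in $N$. The L-shaped quasi-geodesic trick resolves this by using the Morse property of $\alpha$ to propagate the closeness estimate back through the unknown interval $[0,T]$, where the shared basepoint $\alpha(0)=\beta(0)$ then takes over.
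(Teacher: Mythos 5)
Your proof is correct and follows essentially the same route as the paper: both hinge on observing that the concatenation $\beta|_{[0,t]}\ast[\beta(t),\alpha(t)]$ is a $(1,2\delta_N)$-quasi-geodesic with endpoints on $\alpha$, and then using the $N$-Morse property of $\alpha$ to extract a Hausdorff bound depending only on $N$. You simply make explicit two steps the paper delegates to cited lemmas or leaves implicit (upgrading the neighborhood containment to a parameterized bound via the shared basepoint, and verifying that bounded distance from a Morse geodesic implies Morse).
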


\begin{proof}
    It suffices to show that $d_{Haus}(\alpha,\beta)\leq K$ where $K\geq0$ depends only on $N$. Choose $T>0$ so that $d(\alpha(t),\beta(t))\leq \delta_N$ for all $t\geq T$. Note that $[\beta(b),\beta(t)]*[\beta(t),\alpha(t)]$ is a $(1,2\delta_N)$ quasi-geodesic, so by \cite[Lemma 2.1]{cordes_boundry2016}, $d_{Haus}([\alpha(a),\alpha(t)],[\beta(b),\beta(t)]*[\beta(t),\alpha(t)])\leq L$ for some $L$ depending only on $N$. But since $d(\alpha(t),\beta(t))\leq\delta_N$, we have $d_{Haus}([\alpha(a),\alpha(t)],[\beta(b),\beta(t)])\leq L+\delta_N$.
\end{proof}

The above statement leads to the following generalization, which is very similar to \cite[Lemma 2.8]{cordes_boundry2016}. This statement will be useful for showing a generalization of Corollary \ref{limit_points_hyperbolic}, since our horoballs and funnels will be restricted to a single Morse stratum. See Theorem \ref{MorseHoroLimitsAreMorseFunnelLimits}.

\begin{prop}\label{ParallelRaysAreMorse}
    Suppose $x\in\partial X_{\0}^N$ for a Morse gauge $N$. Then any geodesic ray $\alpha:[a,\infty)\rightarrow X$ with $\alpha(\infty)=x$ is $M$-Morse, where $M$ depends only on $N$ and the Morse gauge of $[\alpha(a),\0]$.
\end{prop}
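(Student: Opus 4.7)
The plan is to reduce the statement to an application of Proposition~\ref{ParallelRaysWithSameStartareMorse}, by first constructing an auxiliary Morse geodesic ray from $v := \alpha(a)$ to $x$ whose Morse gauge depends only on $N$ and the Morse gauge $N_1$ of $[\alpha(a),\0]$. Fix an $N$-Morse ray $\gamma\colon [0,\infty) \to X$ with $\gamma(0) = \0$ and $\gamma(\infty) = x$, reparametrize $\alpha$ to start at $v$, and set $L = d(\0, v)$.

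First I would build the auxiliary ray $\gamma_v$. For each $n$, the concatenation $\eta_n := [\0, v] \ast [v, \gamma(n)]$ has arc-length $L + d(v, \gamma(n))$, which by the triangle inequality differs from $d(\0, \gamma(n)) = n$ by at most $2L$; a case check on sub-parameters shows $\eta_n$ is a $(1, 2L)$-quasi-geodesic from $\0$ to $\gamma(n)$. Since both endpoints lie on $\gamma$, the $N$-Morse property gives $[v, \gamma(n)] \subset \eta_n \subset \Nhbd_{N(1, 2L)}(\gamma)$. Properness of $X$ and Arzel\`a--Ascoli then produce a subsequential limit $\gamma_v\colon [0,\infty) \to X$ which is a geodesic ray with $\gamma_v(0) = v$ and $\gamma_v \subset \Nhbd_{N(1, 2L)}(\gamma)$.

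Next I would show $\gamma_v$ is $M_v$-Morse with $M_v$ depending only on $N$ and $L$. Given a $(K, C)$-quasi-geodesic $\psi$ with endpoints $\gamma_v(s_1), \gamma_v(s_2)$, pick $p_i \in \gamma$ within $N(1, 2L)$ of $\gamma_v(s_i)$; the extension $\psi' := [p_1, \gamma_v(s_1)] \ast \psi \ast [\gamma_v(s_2), p_2]$ is a $(K, C + 2N(1, 2L))$-quasi-geodesic with endpoints on $\gamma$, hence contained in $\Nhbd_{N(K, C + 2N(1, 2L))}(\gamma)$. To transfer this closeness from $\gamma$ to $\gamma_v$, I would establish $\gamma \subset \Nhbd_D(\gamma_v)$ with $D$ depending only on $N$ and $L$: from $\gamma_v \subset \Nhbd_{N(1, 2L)}(\gamma)$ one derives a pointwise bound $d(\gamma(t), \gamma_v(t)) \leq L + 2N(1, 2L)$ for $t$ sufficiently large (comparing parameters along the two rays from $\0$), which feeds into Corollary~\ref{betterswensonlemma4} via Corollaries~\ref{cordescornumber1} and \ref{cordescornumber2} to give $\delta_N$-fellow-traveling after a kickoff time controlled by $N$ and $L$; for small $t$ one uses the trivial bound $d(\gamma(t), v) \leq t + L$.

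Finally, Corollary~\ref{betterswensonlemma4} applied to the $M_v$-Morse ray $\gamma_v$ and the geodesic ray $\alpha$ yields an isometric reparametrization $\beta := \alpha \circ \rho'$ with $\beta \sim_{\delta_{M_v}} \gamma_v$ and $\beta$ starting at $v = \gamma_v(0)$; Proposition~\ref{ParallelRaysWithSameStartareMorse} then concludes that $\beta$, and hence $\alpha$, is $M$-Morse for $M$ depending only on $M_v$, i.e., only on $N$ and $N_1$. The main obstacle I anticipate is the reverse containment $\gamma \subset \Nhbd_D(\gamma_v)$ with effective control of $D$: this requires carefully tracking the kickoff constants of Corollary~\ref{betterswensonlemma4} in terms of the starting-point offset $L$, though the uniform closeness $\gamma_v \subset \Nhbd_{N(1, 2L)}(\gamma)$ established at the outset makes the bookkeeping tractable.
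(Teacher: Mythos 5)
Your overall strategy --- build an auxiliary geodesic ray from $v=\alpha(a)$ to $x$ as a limit of geodesic segments $[v,\gamma(n)]$, show it is Morse with controlled gauge, and then feed it into Corollary \ref{betterswensonlemma4} and Proposition \ref{ParallelRaysWithSameStartareMorse} --- is the same as the paper's. The problem is in how you control the Morse gauge of the segments $[v,\gamma(n)]$, and hence of $\gamma_v$. You treat $[\0,v]*[v,\gamma(n)]$ as a $(1,2L)$-quasi-geodesic with endpoints on the $N$-Morse ray $\gamma$, which places $[v,\gamma(n)]$ in $\Nhbd_{N(1,2L)}(\gamma)$; from that point on every constant in your argument ($N(1,2L)$, $D$, $M_v$) depends on $L=d(\0,v)$. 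But the proposition asserts that $M$ depends only on $N$ and the \emph{Morse gauge} of $[\alpha(a),\0]$, and a Morse gauge gives no bound on the length of a segment (in a tree every geodesic is $N_0$-Morse for one universal $N_0$, yet segments are arbitrarily long). So your closing sentence, converting ``depends on $N$ and $L$'' into ``depends only on $N$ and $N_1$,'' is an unjustified leap. The weaker statement you actually prove would not suffice where the proposition is used later: in Lemma \ref{HoroballProjectionToSubrayCloseToParentProjection} and Proposition \ref{funnelscontainhoroballs} it is applied to rays $\beta$ whose starting points $\beta(b)$ range over an entire horoball --- an unbounded set --- with only the Morse gauge of $[\0,\beta(b)]$ controlled.

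The fix is to replace the concatenation trick by the Morse triangle lemma \cite[Lemma 2.3]{cordes_boundry2016}, which is what the paper does: $[v,\gamma(n)]$ is the third side of a geodesic triangle whose other two sides, $[v,\0]$ (which is $N_1$-Morse by hypothesis) and $\gamma|_{[0,n]}$ (Morse with gauge depending only on $N$ by Lemma \ref{MorseSubraysAreMorse}), are Morse; hence $[v,\gamma(n)]$ is $N''$-Morse with $N''$ depending only on $N$ and $N_1$. With that substitution the rest of your outline (the Arzel\`a--Ascoli limit, Corollary \ref{betterswensonlemma4}, Proposition \ref{ParallelRaysWithSameStartareMorse}) goes through and in fact simplifies, since you no longer need the back-and-forth neighborhood containments between $\gamma$ and $\gamma_v$ to certify that $\gamma_v$ is Morse.
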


\begin{proof}
    See Figure \ref{Figure2}. Let $\beta:[b,\infty)\rightarrow X$ be $N$-Morse with $\beta(b)=\0, \beta(\infty)=\alpha(\infty)$, and  let $N'$ be the Morse gauge of $[\alpha(a),\beta(b)]$. For each $n\in\mathbb{N}$, let $\gamma_n=[\alpha(a),\beta(b+n)]$. Note that $\beta|_{[b,b+n]}$ is Morse for some Morse gauge depending only on $N$ by Lemma \ref{MorseSubraysAreMorse}, and so by \cite[Lemma 2.3]{cordes_boundry2016}, $\gamma_n$ in $N''$-Morse for $N''$ depending only on $\max\{N,N'\}$. By potentially restricting to a subsequence for Arzel\`a-Ascoli and by \cite[Lemma 2.10]{cordes_boundry2016}, there exists an $N''$-Morse geodesic ray $\gamma$ with $\gamma_n\rightarrow \gamma$ (uniformly on compact sets) and $\gamma(\infty)=\beta(\infty)$. Then Proposition \ref{ParallelRaysWithSameStartareMorse} shows that $\alpha$ is Morse for an appropriate Morse gauge.
\end{proof}

\begin{figure}[h]
\centering
\begin{tikzpicture}
        \draw [->, black] (0,0) to (10,0);
        \draw node[label=below:{$\beta$}] at (6,0) {};
        \draw node[label=left:{$\beta(b)$},fill,circle,scale=0.5] at (0,0) {};
        \draw node[label=right:{$\alpha(a)$},fill,circle,scale=0.5] at (3,4) {};
        \draw [->, black] (3,4) to[out=-85,in=180] (10,0.5);
        \draw node[label=above:{$\alpha$}] at (6,0.9) {};
        \draw [-, black] (3,4) to[out=-130,in=40] (0,0);
        \draw [-, black] (3,4) to[out=-110,in=70] (2,0);
        \draw [-, black] (3,4) to[out=-90,in=120] (4,0);
        \draw node[label=below:{$\beta(b+1)$},fill,circle,scale=0.5] at (2,0) {};
        \draw node[label=below:{$\beta(b+2)$},fill,circle,scale=0.5] at (4,0) {};
        \draw [->, black] (3,4) to[out=-85,in=180] (10,0.1);
        \draw node[label=above:{$\gamma_2$}] at (4,0.15) {};
        \draw node[label=above:{$\gamma_1$}] at (2.5,0.15) {};
        \draw node[label=left:{$N'$-Morse}] at (2,2.5) {};
        \draw node[label=above:{$\gamma$}] at (7,0.15) {};
        \end{tikzpicture}\\
        \caption{Diagram for Proposition \ref{ParallelRaysAreMorse}}\label{Figure2}

\end{figure}



\subsection{Limit Sets and Weak Convex Hulls}

We now introduce limit sets and weak convex hulls, and give some useful properties that these sets have. We use these constructions to turn subsets of $X$ into subsets of the Morse boundary, and vice versa.

\begin{defn}\label{LimitSetDefinition}(\cite[Definition 3.2]{boundarycc2016}) Let $X$ be a proper, geodesic metric space and let $A\subseteq X$. The \textit{limit set of} $A$, denoted as $\Lambda A$, is the set of points in $\partial X_{\0}$ such that, for some Morse gauge $N$, there exists a sequence of points $(a_k)\subset A\cap X_{\0}^N$ such that $[\0,a_k]$ converges (uniformly on compact sets) to a geodesic ray $\alpha$ with $\alpha(\infty)=x$. (Note $\alpha$ is $N$-Morse by \cite[Lemma 2.10]{cordes_boundry2016}.) In the case where $H$ acts properly by isometries on $X$, we use $\Lambda H$ to denote the limit set of $H\0$.
\end{defn}

\begin{remark}
    By \cite[Lemma 3.3]{boundarycc2016}, $\Lambda H$ can be defined as the limit set of \textit{any} orbit of $H$, we merely choose the orbit $H\0$ for convenience and simplicity in future arguments. 
\end{remark}

We also prove a small fact about limit sets, which is similar to \cite[Lemma 4.1, Proposition 4.2]{boundarycc2016}.

\begin{cor}\label{LimitSetsInstratumAreCompact}
    Let $X$ be a proper, geodesic metric space and suppose $A\subseteq X$. If $\Lambda A\subseteq \partial X^N_{\0}$ for some Morse gauge $N$, then $\Lambda A$ is compact.
\end{cor}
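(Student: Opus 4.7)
The plan is to show $\Lambda A$ is a closed subset of $\partial X_{\0}^N$, which is compact (by \cite[Lemma 4.1]{boundarycc2016}, or by a direct Arzel\`a--Ascoli argument on the space of $N$-Morse geodesic rays from $\0$ using properness of $X$ together with \cite[Lemma 2.10]{cordes_boundry2016}). A closed subset of a compact space is compact.

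To prove closedness, let $(x_k) \subseteq \Lambda A$ converge to some $x \in \partial X_{\0}^N$; I aim to produce a defining sequence in $A$ for $x$. Represent $x_k$ and $x$ by $N$-Morse geodesic rays $\alpha_k$ and $\alpha$ from $\0$, so that after a subsequence $\alpha_k \to \alpha$ uniformly on compact sets (with $\alpha$ $N$-Morse by \cite[Lemma 2.10]{cordes_boundry2016}). For each $k$, a defining sequence $(b_{k,n}) \subseteq A$ for $x_k$ gives $[\0, b_{k,n}] \to \beta_k$ with $\beta_k(\infty) = x_k$ and $\beta_k(0) = \0$. Applying Proposition \ref{ParallelRaysAreMorse} with trivial gauge for the one-point ``segment'' $[\beta_k(0), \0] = \{\0\}$, each $\beta_k$ is $M$-Morse for a common Morse gauge $M = M(N)$. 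Since $\alpha_k$ and $\beta_k$ then both lie in the $\delta$-hyperbolic stratum $X_{\0}^{M'}$ (with $M' = \max\{N, M\}$ and $\delta = \delta(M')$), begin at $\0$, and share their endpoint at infinity, Corollary \ref{betterswensonlemma4} supplies a fellow-traveling constant $C = C(N)$ independent of $k$.

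A diagonal extraction, choosing $k_m$ so that $\alpha_{k_m}$ is within $1/m$ of $\alpha$ on $[0, m]$, and then $n_m$ so that $[\0, b_{k_m, n_m}]$ is within $1/m$ of $\beta_{k_m}$ on $[0, m]$ with $d(\0, b_{k_m, n_m}) \geq m$, produces $c_m := b_{k_m, n_m} \in A$ whose geodesic segment $[\0, c_m]$ stays within $C + 2/m$ of $\alpha$ on $[0, m]$. Stability of Morse geodesics places all $c_m$ in a common stratum $X_{\0}^{M''}$ with $M'' = M''(N)$, so a further Arzel\`a--Ascoli extraction produces a limit ray $\gamma$ from $\0$, itself $M''$-Morse, remaining within $C$ of $\alpha$ everywhere. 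Since $\alpha$ is $N$-Morse, this forces $\gamma(\infty) = \alpha(\infty) = x$, and hence $x \in \Lambda A$.

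The main obstacle is uniform Morse control of the $\beta_k$: they are a priori only known to lie in the $k$-dependent strata $X_{\0}^{N_k}$ witnessing $x_k \in \Lambda A$. Proposition \ref{ParallelRaysAreMorse} is exactly the tool that converts the coincidence $\beta_k(\infty) = x_k \in \partial X_{\0}^N$ together with the fixed basepoint into a Morse gauge for $\beta_k$ depending only on $N$; without this uniformity, the diagonal sequence $(c_m)$ need not land in a common stratum and the limiting ray $\gamma$ need not represent a point in $\partial X_{\0}$.
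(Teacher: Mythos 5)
Your overall route---show $\Lambda A$ is closed in the compact space $\partial X_{\0}^N$---is the same reduction the paper makes, except the paper stops there: its proof is a one-line appeal to \cite[Proposition 3.12]{cordes_boundry2016} (compact $\Leftrightarrow$ closed inside a single stratum) and simply asserts closedness. You try to actually prove closedness, which is more than the paper does, but the argument has a genuine gap at the step ``Stability of Morse geodesics places all $c_m$ in a common stratum $X_{\0}^{M''}$.''

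The uniform Morse control you extract from Proposition \ref{ParallelRaysAreMorse} is correct but applies to the wrong objects: it makes the limit rays $\beta_k$ uniformly $M(N)$-Morse (they start at $\0$ and represent points of $\partial X_{\0}^N$), not the geodesic segments $[\0,c_m]=[\0,b_{k_m,n_m}]$. Those segments are only known to track $\beta_{k_m}$ (hence $\alpha$) on the initial window $[0,m]$, while $d(\0,c_m)$ may be far larger than $m$; on the remaining portion the only available control is the gauge $N_{k_m}$ witnessing $x_{k_m}\in\Lambda A$, which Definition \ref{LimitSetDefinition} permits to depend on $k$ and to have nothing to do with $N$. A geodesic whose initial segment fellow-travels a Morse ray can be arbitrarily badly behaved past that segment, so membership of $c_m$ in a fixed stratum does not follow. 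This matters because Definition \ref{LimitSetDefinition} requires the witnessing sequence to lie in $A\cap X_{\0}^{M''}$ for a \emph{single} gauge $M''$: your limit ray $\gamma$ does represent $x$ and is Morse, but without placing the $c_m$ themselves in a common stratum you cannot conclude $x\in\Lambda A$. You correctly flag uniform Morse control as ``the main obstacle,'' but the uniformity you establish is for the rays $\beta_k$ rather than for the segments $[\0,b_{k,n}]$, and the gap is precisely in transferring it from one to the other.
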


\begin{proof}
    By \cite[Proposition 3.12]{cordes_boundry2016}, this is equivalent to the condition that $\Lambda H$ is closed.
\end{proof}

\begin{remark}
By Corollary \ref{LimitSetsInstratumAreCompact} and by \cite[Lemma 4.1]{boundarycc2016}, the requirement that $\Lambda H$ is compact is equivalent to the requirement that $\Lambda H$ is contained in the boundary of a single Morse stratum. 
\end{remark}

\begin{defn}\label{WeakConvexHullDefinition}(\cite{swenson2001, boundarycc2016})
    Let $X$ be a proper, geodesic metric space, and let $A\subseteq X\cup \partial X_{\0}$. Then the \textit{weak convex hull} of $A$, denoted $WCH(A)$, is the union of all geodesic (segments, rays, or lines) of $X$ which have both endpoints in $A$. 
\end{defn}

We take a moment to highlight some nice interactions between the weak convex hull of a compact limit set with the Morse boundary.

\begin{lemma}\label{LimitSetsInstratumHaveWCHInAstratum}(\cite[Proposition 4.2]{boundarycc2016}) Let $X$ be a proper geodesic metric space and let $A\subseteq X$ such that $\Lambda A\subseteq \partial X_{\0}^N$ for some Morse gauge $N$. Then there exists a Morse gauge $N'$, depending only on $N$, such that $WCH(\Lambda A)\subset X_{\0}^{N'}$. \hfill $\square$
\end{lemma}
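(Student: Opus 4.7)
Since $\Lambda A$ is contained in $\partial X_{\0}$, the weak convex hull $WCH(\Lambda A)$ consists precisely of bi-infinite geodesic lines $\eta$ whose ideal endpoints $x^\pm = \eta(\pm\infty)$ lie in $\Lambda A \subseteq \partial X^N_{\0}$. It therefore suffices to exhibit a Morse gauge $N'$, depending only on $N$, such that for every such $\eta$ and every $p\in\eta$ the segment $[\0,p]$ is $N'$-Morse. My plan is to approximate $\eta$ by finite geodesic segments whose endpoints lie on fixed $N$-Morse rays issuing from $\0$, and then extract uniform Morse control by a limiting argument.

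First I would fix $N$-Morse geodesic rays $\alpha^\pm:[0,\infty)\to X$ with $\alpha^\pm(0)=\0$ and $\alpha^\pm(\infty)=x^\pm$, which exist by the definition of $\partial X^N_{\0}$. For each $t\geq0$, set $\gamma_t=[\alpha^+(t),\alpha^-(t)]$. The technical heart of the argument is a Morse triangle lemma: each $\gamma_t$ is $N''$-Morse with $N''$ depending only on $N$, and moreover lies in a uniform $K(N)$-neighborhood of $\alpha^+|_{[0,t]}\cup\alpha^-|_{[0,t]}$. This should follow from the concatenation machinery used in \cite[Lemma 2.3]{cordes_boundry2016} and its relatives, applied to the two $N$-Morse sides of the triangle $\triangle(\0,\alpha^+(t),\alpha^-(t))$ (using Lemma \ref{MorseSubraysAreMorse} to control subsegments). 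Using properness of $X$ and Arzelà--Ascoli, the $\gamma_t$ converge on compact sets to a bi-infinite geodesic $\tilde\eta$ with $\tilde\eta(\pm\infty)=x^\pm$; the limit argument in the spirit of \cite[Lemma 2.10]{cordes_boundry2016} shows $\tilde\eta$ is $N''$-Morse, and Corollary \ref{betterswensonlemma4} identifies $\eta$ with $\tilde\eta$ up to fellow-traveling, so $\eta$ itself is Morse with a gauge depending only on $N$.

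To conclude, fix $p\in\eta$ and choose $p_t\in\gamma_t$ with $p_t\to p$. The triangle slimness from Step 2 places each $p_t$ within $K(N)$ of some point $\alpha^{\pm}(s_t)$ on one of the two Morse sides; after passing to a subsequence, say $\alpha^+(s_t)$ for all $t$. Then the concatenation $\alpha^+|_{[0,s_t]}*[\alpha^+(s_t),p_t]$ is the join of an $N$-Morse geodesic with a segment of length at most $K(N)$, hence is an $\widetilde N$-Morse $(1,2K(N))$-quasi-geodesic with $\widetilde N$ depending only on $N$, by the concatenation results of \cite[Lemma 2.3]{cordes_boundry2016}. Consequently the geodesic $[\0,p_t]$, which shares its endpoints, lies in a uniform neighborhood of this quasi-geodesic and is itself Morse with gauge depending only on $N$. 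Passing $t\to\infty$ and applying \cite[Lemma 2.10]{cordes_boundry2016} one more time yields the desired $N'$-Morse gauge for $[\0,p]$.

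The main obstacle is proving the Morse triangle slimness of $\gamma_t$ with constants depending only on $N$. While this statement is standard in the $\delta$-hyperbolic setting, in the Morse setting the two sides $\alpha^\pm|_{[0,t]}$ share only the vertex $\0$, so one cannot appeal to a direct thin-triangle inequality; instead the argument must route through the quasi-geodesic concatenation lemmas, with careful bookkeeping of the resulting constants so that they absorb into $N$ alone.
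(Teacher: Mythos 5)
The paper does not actually prove this lemma --- it is imported wholesale from \cite[Proposition 4.2]{boundarycc2016} (hence the $\square$ with no argument), so there is no ``paper proof'' to match; what you have written is a genuine reconstruction, and its overall architecture is sound. That said, it is considerably more roundabout than necessary. The ``main obstacle'' you identify --- uniform slimness of (ideal) triangles with two $N$-Morse sides --- is not an obstacle at all: it is exactly \cite[Proposition 3.6]{boundarycc2016}, which this paper already invokes in the proof of Lemma \ref{WeakConvexHullAndLimitSetContainments}. Granting that, the direct argument is three lines: any $\eta \subseteq WCH(\Lambda A)$ together with $N$-Morse rays $\alpha^{\pm}$ from $\0$ to $\eta(\pm\infty)$ forms an $L(N)$-slim ideal triangle, so every $p \in \eta$ lies within $L(N)$ of some $\alpha^{\pm}(s)$; then $[\0,p]$ is one side of a geodesic triangle whose other two sides are the $N$-Morse segment $[\0,\alpha^{\pm}(s)]$ and a segment of length at most $L(N)$, so \cite[Lemma 2.3]{cordes_boundry2016} gives the uniform gauge $N'$. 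Your detour through the finite segments $\gamma_t$ and Arzel\`a--Ascoli essentially re-derives the ideal slimness from the finite one, which is legitimate but buys you nothing, since the finite-triangle slimness is the same input.

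One point in your write-up does need repair. Your limiting construction produces a particular line $\tilde\eta$, and in the final step you choose $p_t \in \gamma_t$ with $p_t \to p$ --- but this only makes sense for $p \in \tilde\eta$, whereas the lemma concerns an arbitrary geodesic line $\eta$ with endpoints in $\Lambda A$. You wave at this by saying Corollary \ref{betterswensonlemma4} ``identifies $\eta$ with $\tilde\eta$ up to fellow-traveling,'' but that corollary only controls the two tails after some (non-uniform) time $T$; to transport the conclusion from $\tilde\eta$ to $\eta$ with a constant depending only on $N$ you must also handle the middle portion of $\eta$. This is fixable with the same device used in Proposition \ref{ParallelRaysWithSameStartareMorse}: once the endpoints of the middle segment of $\eta$ are within $\delta_N$ of $\tilde\eta$, that segment is a geodesic whose endpoints lie on a $(1,2\delta_N)$-quasi-geodesic perturbation of the Morse geodesic $\tilde\eta$, hence stays within $N''(1,2\delta_N)$ of it. With that sentence added (or, better, by switching to the direct argument above, which quantifies over all of $\eta$ from the start), your proof is correct.
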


\begin{lemma}\label{WeakConvexHullAndLimitSetContainments}
    Let $X$ be a proper geodesic metric space and let $A\subseteq X$ such that $\Lambda A\subseteq \partial X_{\0}^N$ for some Morse gauge $N$. Then $\Lambda(WCH(\Lambda A))\subseteq \Lambda A$.
\end{lemma}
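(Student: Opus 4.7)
The plan is to show that any point $x \in \Lambda(WCH(\Lambda A))$ is a limit of boundary points $\xi_k^\pm \in \Lambda A$, which then lie in $\Lambda A$ by its compactness (Corollary \ref{LimitSetsInstratumAreCompact}). By Lemma \ref{LimitSetsInstratumHaveWCHInAstratum}, $WCH(\Lambda A) \subset X_{\0}^{N'}$ for a Morse gauge $N'$ depending only on $N$, and the stratum $X_{\0}^{N'}$ is $\delta$-hyperbolic for some $\delta = \delta(N')$ by \cite[Proposition 3.2]{Cordes_Hume_2016_Stability}. Pick $p_k \in WCH(\Lambda A) \cap X_{\0}^{N'}$ with $[\0, p_k] \to \alpha$ uniformly on compact sets and $\alpha(\infty) = x$. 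Each $p_k$ lies on some bi-infinite geodesic $\gamma_k \subset WCH(\Lambda A)$ with endpoints $\xi_k^\pm \in \Lambda A$; let $\beta_k^\pm$ denote the $N$-Morse geodesic rays from $\0$ to $\xi_k^\pm$, all contained in a common Morse stratum (after enlarging $N'$ if needed). Using compactness of $\Lambda A$, pass to a subsequence so that $\xi_k^\pm \to \xi^\pm \in \Lambda A$; equivalently, $\beta_k^\pm$ converges uniformly on compact sets to Morse rays $\beta^\pm$ with $\beta^\pm(\infty) = \xi^\pm$.

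The crux is showing $\alpha(\infty) \in \{\xi^+, \xi^-\}$. Working inside the $\delta$-hyperbolic stratum $X_{\0}^{N'}$, the ideal triangle with vertices $\0, \xi_k^+, \xi_k^-$ (with sides $\beta_k^\pm$ and $\gamma_k$) is uniformly thin in the standard $\delta$-hyperbolic sense: every point of $\gamma_k$ is within a constant $D = D(\delta)$ of $\beta_k^+ \cup \beta_k^-$. After a further subsequence, fix $q_k \in \beta_k^+$ with $d(p_k, q_k) \leq D$. The concatenation $[\0, p_k] \cdot [p_k, q_k]$ has total length at most $d(\0, q_k) + 2D$, making it a $(1, 2D)$-quasi-geodesic from $\0$ to $q_k$ with endpoints on the $N$-Morse ray $\beta_k^+$. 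The Morse property of $\beta_k^+$ (combined with the argument in Proposition \ref{betterprop2.4} to upgrade from Hausdorff closeness to pointwise fellow-traveling, using that both $[\0, p_k]$ and the subsegment $[\0, q_k] \subset \beta_k^+$ are geodesics from $\0$ of comparable length with close endpoints) then yields a uniform constant $C$ depending only on $N$ such that $d([\0, p_k](t), \beta_k^+(t)) \leq C$ for all $t \in [0, d(\0, p_k) - D]$.

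To conclude, fix $t > 0$ and let $k \to \infty$: eventually $d(\0, p_k) - D > t$, so $d([\0, p_k](t), \beta_k^+(t)) \leq C$, and passing to the limit via $[\0, p_k](t) \to \alpha(t)$ and $\beta_k^+(t) \to \beta^+(t)$ gives $d(\alpha(t), \beta^+(t)) \leq C$ for every $t$. Thus $\alpha$ is at bounded Hausdorff distance from the Morse ray $\beta^+$, so $\alpha(\infty) = \beta^+(\infty) = \xi^+ \in \Lambda A$, i.e., $x \in \Lambda A$. The main obstacle I expect is the ideal-triangle thinness step: one must ensure all the relevant geodesics $\beta_k^\pm$ and $\gamma_k$ lie in a common Morse stratum so that the $\delta$-hyperbolicity of that stratum gives a thinness constant uniform in $k$, which is arranged via Lemma \ref{LimitSetsInstratumHaveWCHInAstratum}; a secondary technical point is carefully handling the case where $\xi^+ = \xi^-$ so the "limiting ideal triangle" degenerates, but the argument above localizes to $p_k$'s nearest side and so does not require a non-degenerate limit.
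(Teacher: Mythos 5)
Your proposal is correct and follows essentially the same route as the paper's proof: the core in both is that the ideal triangle $\0,\xi_k^+,\xi_k^-$ is uniformly slim because everything lies in a common Morse stratum (the paper cites \cite[Proposition 3.6]{boundarycc2016} for exactly the thinness fact you derive from hyperbolicity of the stratum), followed by the $(1,2D)$-quasi-geodesic concatenation trick against the Morse side of the triangle. The only difference is the endgame: you conclude via closedness of $\Lambda A$ applied to the limit of the boundary points $\xi_k^+$, whereas the paper instead approximates $\beta_k^+$ by segments $[\0,a_m^+]$ with $a_m^+\in A$ and invokes Definition \ref{LimitSetDefinition} directly, which avoids needing $\Lambda A$ closed.
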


\begin{proof}
    We may assume $|\Lambda A|>1$. Let $x\in \Lambda(WCH(\Lambda A))$. By Definition \ref{LimitSetDefinition}, there exists $x_n\in WCH(\Lambda A)$ such that $[\0,x_n]$ converges to a geodesic ray $\gamma$ with $\gamma(\infty)=x$. We show that there exists $K>0$ so that for all $n$ there exists $a_n$ with $[\0,x_n]\subseteq \Nhbd_K([\0,a_n])$. Thus, (a subsequence of) the geodesics $[\0,a_n]$ converge to a geodesic ray $\alpha:[0,\infty)\rightarrow X$ with $\alpha(0)=\0$, and $\alpha(\infty)=\gamma(\infty)=x$ and so $x\in\Lambda A$. It remains to find $K$ so that $[\0,x_n]\subseteq \Nhbd_K([\0,a_n])$.

    Fix $n$. Since $x_n\in WCH(\Lambda A)$, $x\in\eta$ where $\eta:(-\infty,\infty):\rightarrow X$ is a geodesic with $\eta(\pm\infty)\in\Lambda A$. So, by Definition \ref{LimitSetDefinition}, there exists $a_k^+,a_k^-\in A\cap X^N_{\0}$ so that $[\0,a_k^+]$ and $[\0,a_k^-]$ converge to geodesics $\beta^+$ and $\beta^-$, respectively, with $\beta^+(\infty)=\eta(\infty)$ and $\beta^-(-\infty)=\eta(-\infty)$. 
    Since $\Lambda A\subseteq \partial X_{\0}^N$, the triangle $\eta\cup\beta^+\cup\beta^-$ is $L$-slim for $L$ depending only on $N$ by \cite[Proposition 3.6]{boundarycc2016}, and as $x\in\eta$, there exists $y\in\beta^+\cup\beta^-$ so that $d(x_n,y)\leq L$. Without loss of generality, assume $y\in\beta^+$. Since $[\0,a_k^+]$ converges to $\beta^+$ uniformly on compact sets, choose $m$ large enough so that $d(y,[\0,a_m^+])\leq 1$. Let $z\in[\0,a_m^+]$ so that $d(y,z)\leq 1$. See Figure \ref{FigureLimitSetAndWCH}.

    Note that the concatenation $[\0,x_n]*[x_n,z]$ is a $(1,L+1)$-quasi-geodesic with endpoints on $[\0,a_m^+]$. Since $[\0,a_m^+]$ is $N$-Morse, we have that $[\0,x_n]\subseteq[\0,x_n]*[x_n,z]\subseteq \Nhbd_{N(1,L+1)}([\0,a_m^+])$. Since $K:=N(1,L+1)$ did not depend on the choice of $n$, this completes the proof.
\end{proof}

\begin{figure}[h]
\centering
\begin{tikzpicture}
        \draw[<->, black] (0,4) to (10,4);
        \draw node[label=above:{$\eta$}] at (2,4) {};
        \draw node[label=below:{$\beta^+$}] at (9,3.5) {};
        \draw node[label=below:{$\beta^-$}] at (0.5,3.5) {};
        \draw node[label=above:{$x_n$},fill,circle,scale=0.5] at (5,4) {};
        \draw node[label=above left:{$y$},fill,circle,scale=0.5] at (5,3.1) {};
        \draw [-] (5,4) to (5,3.1);
        \draw [-] (5,2.72) to (5,3.1);
        \draw node[label=below left:{$z$},fill,circle,scale=0.5] at (5,2.72) {};
        \draw [-, black] (2,0)  to[out=90,in=-150] (5,4);
        \draw [-, black] (2,0)  to[out=90,in=120] (7,2);
        \draw [->, black] (2,0) to[out=90,in=180] (10,3.5);
        \draw [->, black] (2,0) to[out=90,in=0] (0,3.5);
        \draw node[label=right:{$\leq L$}] at (5,3.5) {};
        \draw node[label=right:{$\leq 1$}] at (5,2.95) {};
        \draw node[label=right:{$a_m^+$},fill,circle,scale=0.5] at (7,2) {};
        \draw node[label=right:{$\0$},fill,circle,scale=0.5] at (2,0) {};
        \end{tikzpicture}\\
        \caption{Diagram for Lemma \ref{WeakConvexHullAndLimitSetContainments}}\label{FigureLimitSetAndWCH}

\end{figure}

Finally, we finish this section by stating the definitions of stability and boundary convex cocompactness here for reference.

\begin{defn} \label{stabledefinition}
    \textit{(\cite{Durham_Taylor_ConvexCC_and_Stab_2015} \cite[Definition 1.3]{boundarycc2016})} If $f:X\rightarrow Y$ is a quasi-isometric embedding between geodesic metric spaces, we say $X$ is a \textit{stable} subspace of $Y$ if there exists a Morse Gauge $N$ such that every pair of points in $X$ can be connected by an $N$-Morse quasigeodesic in $Y$; we call $f$ a \textit{stable embedding}.
    
    If $H< G$ are finitely generated groups, we say $H$ is \textit{stable in} $G$ if the inclusion map $i:H\hookrightarrow G$ is a stable embedding.
\end{defn}

\begin{defn}\label{BoundaryConvexCocompactDefinition} \textit{(\cite[Definition 1.4]{boundarycc2016})}
    We say that $H$ acts \textit{boundary convex cocompactly} on $X$ if the following conditions hold:
    \begin{enumerate}
        \item $H$ acts properly on $X$,
        \item $\Lambda H$ is nonempty and compact,
        \item The action of $H$ on $WCH(\Lambda H)$ is cobounded.
    \end{enumerate}
\end{defn}


\section{Limit point characterizations in the Morse Boundary}

The goal of this section is show that, given a set $A\subseteq X$, if $x\in\partial X_{\0}$ is a Morse conical limit point of $A$, then $x$ is a Morse horospherical limit point of $A$. This was first shown in the hyperbolic case in \cite{swenson2001}, here we generalize this fact into the setting of proper geodesic metric spaces. We begin by introducing definitions which generalize horospheres and funnels for Morse rays.

\begin{defn}[Horoballs, Funnels]
    Let $X$ be a proper, geodesic metric space and let $\0\in X$ be some designated point. Let $\alpha:[a,\infty)\rightarrow X$ be an $N'$-Morse geodesic ray, and let $N$ be some, potentially different, Morse gauge. We define the \textit{$N$-Morse horoball around $\alpha$ based at $\0$} as 
    \[H^N_{\0}(\alpha)=\{x\in X^N_{\0}~|~\exists \beta:[b,\infty)\rightarrow X \text{ with }\beta\sim_{\delta_{N'}}\alpha\text{ and }b\geq a\text{ and }\beta(b)=x\}. \]
    We define the \textit{$N$-Morse funnel around $\alpha$ based at $\0$} as
    \[F^N_{\0}(\alpha)=\{x\in X^N_{\0}~|~d(x,\pi_\alpha(x))\leq d(\alpha(a),\pi_\alpha(x))\}.\]
\end{defn}

Comparing these definitions to Definition \ref{hyperbolicdef1} shows that a Morse horoball is a horoball about a Morse geodesic intersected with an appropriate Morse stratum, and similarly, a Morse funnel is a funnel about a Morse geodesic intersected with an appropriate Morse stratum. The following three definitions classify points on the Morse boundary by asking if every horoball, funnel, or cone intersects a given subset of $X$.

\begin{defn}\label{conical limit point def}
    Let $X$ be a proper, geodesic metric space and let $\0\in X$ be some designated point. Let $A\subset X$.
    
    \begin{itemize}
    \item We say that $x\in\partial X_{\0}$ is a \textit{Morse horospherical limit point of $A$} if for every Morse geodesic $\alpha$ with $\alpha(\infty)=x$, there exists a Morse gauge $N$ such that $H^{N}_{\0}(\alpha)\cap A\neq\emptyset$.
    
    \item We say that $x\in\partial X_{\0}$ is a \textit{Morse funneled limit point of $A$} if for every Morse geodesic $\alpha$ with $\alpha(\infty)=x$, there exists a Morse gauge $N$ such that $F^{N}_{\0}(\alpha)\cap A\neq\emptyset$.
    
    \item We say that $x\in\partial X_{\0}$ is a \textit{Morse conical limit point of $A$} if there exists $K>0$ such that, for every Morse geodesic $\alpha$ with $\alpha(\infty)=x$, we have that $\Nhbd_K(\alpha)\cap A\neq\emptyset$.
    \end{itemize}
\end{defn}

\begin{remark} Notice that, in the case where $X$ is a $\delta$-hyperbolic space, these definitions agree with the definitions given in Definition \ref{hyperbolicdef2}, as every geodesic in a $\delta$-hyperbolic space is $N$-Morse for $N$ depending only on $\delta$. In light of this, we will use ``conical limit point" instead of ``Morse conical limit point" for the rest of this paper, except in cases where the difference between these definitions causes confusion. We similarly reduce ``Morse horospherical limit point" and ``Morse funneled limit point" to ``horospherical limit point" and ``funneled limit point," respectively. 
\end{remark}

We now begin proving the new implications found in Theorem \ref{MyMainTheorem}. We will first show that every conical limit point of $A$ is a funneled limit point of $A$, and then we will show that the funneled limit points of $A$ exactly coincide with the horospherical limit points of $A$. These arguments generalize the arguments found in \cite{swenson2001}.

\begin{prop}\label{MorseConicalImpliesMorseFunnelled}
    Let $X$ be a proper, geodesic metric space and let $\0\in X$. Let $A\subseteq X$. If $x\in\partial X_{\0}$ is a conical limit point of $A$, then $x$ is a funneled limit point of $A$.
\end{prop}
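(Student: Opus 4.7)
The plan is to combine the conical condition applied to tails of a reference Morse ray with the eventual fellow-travelling of Morse rays sharing an endpoint. Since $x \in \partial X_{\0}$, first fix an $N_0$-Morse geodesic ray $\gamma : [0, \infty) \to X$ with $\gamma(0) = \0$ and $\gamma(\infty) = x$. By Lemma \ref{MorseSubraysAreMorse}, each tail $\gamma|_{[T, \infty)}$ is Morse with a gauge depending only on $N_0$ and still represents $x$, so applying the conical condition (with its universal constant $K$) to these tails produces a sequence $a_n \in A$ and times $t_n \to \infty$ with $d(a_n, \gamma(t_n)) \leq K$.

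Now let $\alpha : [a_0, \infty) \to X$ be an arbitrary Morse geodesic ray with $\alpha(\infty) = x$. By Corollary \ref{betterswensonlemma4}, applied with $\gamma$ as the Morse ray, some isometric reparameterization of $\alpha$ satisfies $\gamma \sim_{\delta_{N_0}} \alpha \circ \rho$, so for all sufficiently large $n$ there is $s_n$ with $d(\gamma(t_n), \alpha(s_n)) \leq \delta_{N_0}$ and $s_n \to \infty$. Thus $d(a_n, \alpha(s_n)) \leq K + \delta_{N_0}$, which forces $d(a_n, \pi_\alpha(a_n)) \leq K + \delta_{N_0}$, and a triangle-inequality chase gives $d(\alpha(a_0), \pi_\alpha(a_n)) \geq s_n - a_0 - 2(K + \delta_{N_0})$. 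The latter exceeds the former once $s_n$ is large enough, producing the funnel inequality $d(a_n, \pi_\alpha(a_n)) \leq d(\alpha(a_0), \pi_\alpha(a_n))$.

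It remains to place $a_n$ in a fixed Morse stratum $X_{\0}^{N_1}$. The idea is that $[\0, a_n] * [a_n, \gamma(t_n)]$ is a $(1, 2K)$-quasi-geodesic from $\0$ to $\gamma(t_n)$ whose endpoints lie on the Morse segment $\gamma|_{[0, t_n]}$ (Morse in a gauge depending only on $N_0$ by Lemma \ref{MorseSubraysAreMorse}), so \cite[Lemma 2.1]{cordes_boundry2016}, invoked in the manner of the proof of Proposition \ref{ParallelRaysWithSameStartareMorse}, bounds $d_{\mathrm{Haus}}([\0, a_n], [\0, \gamma(t_n)])$ in terms of $N_0$ and $K$ alone. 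Extending any $(\lambda, \epsilon)$-quasi-geodesic with endpoints on $[\0, a_n]$ by short segments to one with endpoints on $[\0, \gamma(t_n)]$ and invoking the Morse property of the latter yields a Morse gauge $N_1 = N_1(N_0, K)$ for $[\0, a_n]$. Taking $N = N_1$ and $n$ large we conclude $a_n \in F_{\0}^{N_1}(\alpha) \cap A$, as required. The main technical obstacle is precisely this last step, transferring the Morse property from $\gamma$ to the geodesic $[\0, a_n]$ with only the weak hypothesis that $a_n$ is close to a point far out on $\gamma$; the rest of the argument is a bookkeeping combination of the conical hypothesis with the fellow-travelling results of Section 2.
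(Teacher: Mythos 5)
Your argument is correct, but it is organized differently from the paper's. The paper applies the conical hypothesis directly to the arbitrary ray $\alpha$: since every subray of $\alpha$ is Morse (Lemma \ref{MorseSubraysAreMorse}) and still represents $x$, there is a point $a\in A$ within $K$ of $\alpha|_{[3K,\infty)}$, whose projection to $\alpha$ is therefore at distance at least $K$ from $\alpha(0)$ --- which is already the funnel inequality --- so no fellow-travelling input is needed. You instead extract a single sequence $a_n\in A$ along a fixed reference ray $\gamma$ emanating from $\0$ and then transfer to each $\alpha$ via Corollary \ref{betterswensonlemma4}, paying an extra $\delta_{N_0}$ in the constants and requiring $n$ large depending on $\alpha$. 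What your version buys is uniformity: the witnesses $a_n$ and the stratum gauge $N_1=N_1(N_0,K)$ are chosen once and serve every Morse ray representing $x$, whereas in the paper's proof the witness and the gauge $N'$ also depend on $d(\0,\alpha(0))$ and on the Morse gauge of $[\0,\alpha(0)]$ (which is permitted by Definition \ref{conical limit point def}, but is less clean). The stratum step is likewise handled by different, equally standard, devices: you transfer Morseness across the bounded Hausdorff distance between $[\0,a_n]$ and $\gamma|_{[0,t_n]}$ furnished by \cite[Lemma 2.1]{cordes_boundry2016}, in the spirit of Proposition \ref{ParallelRaysWithSameStartareMorse}, while the paper realizes $[\0,a]$ as the fourth side of a quadrilateral with three Morse sides and quotes \cite[Lemma 2.3]{cordes_boundry2016}. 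Both routes are complete; yours is slightly longer but yields the stronger uniform conclusion.
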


\begin{proof}
    See Figure \ref{Figure4}. Let $x\in\partial X_{\0}$ be a conical limit point of $A\subseteq X$. Let $\alpha:[0,\infty)\rightarrow X$ be an $N$-Morse geodesic with $\alpha(\infty)=x$. By Lemma \ref{MorseSubraysAreMorse}, there exists a Morse gauge $M$ so that every geodesic sub-ray of $\alpha$ is $M$-Morse. Thus by Definition \ref{conical limit point def}, there exists $K\geq 0$ so that every subray of $\alpha$ gets at least $K$ close to $A$. 
    
    Now define $\alpha'=\alpha|_{[3K,\infty)}$,  and let $a\in A$ such that $a\in \Nhbd_K(\alpha')$. Then note that $d(a,\pi_\alpha(a))\leq d(a,\pi_{\alpha'}(a))\leq K$, and so $d(\pi_\alpha(a),\pi_{\alpha'}(a))\leq 2K$. By the triangle inequality, $\pi_\alpha(a)\subseteq \alpha|_{[K,\infty)}$. Therefore, $d(\pi_{\alpha}(a),a)\leq K=d(\alpha(0),\alpha(K)) \leq d(\alpha(0),\pi_{\alpha'}(a))$. It remains to show that $a\in X_{\0}^{N'}$ for a Morse gauge $N'$ which is independent of the choice of $a\in A$.
    
    Let $L=d(\0,\alpha(0))$, and let $p\in\pi_{\alpha'}(a)$, and note that $d(p,a)\leq K$. Thus, $[\0,\alpha(0)]$ and $[p,a]$ are both $N''$-Morse depending only on $\max\{K,L\}$, and $[\0,p]$ is $N'''$-Morse depending only on $N$ by Lemma \ref{MorseSubraysAreMorse}. Since $[\0,a]$ is one side of a quadrilateral whose other three sides are $\max\{N'',N'''\}$-Morse, $[\0,a]$ is $N'$-Morse where $N'$ does not depend on choice of $a\in A$ by \cite[Lemma 2.3]{cordes_boundry2016}.
\end{proof}

\begin{figure}[h]
\centering
\begin{tikzpicture}
        \draw [->, black] (0,0) to (10,0);
        \draw node[label=right:{$\alpha(\infty)=x$}] at (10,0) {};
        \draw node[label=left:{$\alpha(0)$},fill,circle,scale=0.5] at (0,0) {};
        \draw node[label=below:{$\alpha(3K)$},fill,circle,scale=0.5] at (6,0) {};
        \draw node[label=above :{\textcolor{blue}{$\pi_{\alpha'}(a)$}}] at (6.5,-1.2) {};
        \draw [-, blue, ultra thick] (6,0) to (7,0);
        \draw node[label=above :{\textcolor{blue}{$\pi_{\alpha}(a)$}}] at (5,-1.2) {};
        \draw [-, blue, ultra thick] (4.5,0) to (5.5,0);
        \draw[-,dashed,black] (4,0) to[out=90,in=180] (6,2);
        \draw[-,dashed,black] (4,0) to[out=-90,in=180] (6,-2);
        \draw[-,dashed,black] (6,2) to (9,2);
        \draw[-,dashed,black] (6,-2) to (9,-2);
        \draw node[label=below:{$\Nhbd_K(\alpha')$}] at (8,2) {};
        \draw node[label=above:{$a$},fill,circle,scale=0.5] at (5,1) {};
        \draw[-,dashed,black] (5,1) to (4.5,0);
        \draw[-,dashed,black] (5,1) to (5.5,0);
        \draw[-,dashed,black] (5,1) to (6,0);
        \draw[-,dashed,black] (5,1) to (7,0);
        \draw node[label=right:{$\leq K$}] at (6,0.7) {};
        \draw node[label=above:{$\0$},fill,circle,scale=0.5] at (2,1) {};
        \draw[-] (0,0) to (2,1);
        \draw[-] (2,1) to (5,1);
        \draw node[label=above:{$L$}] at (1,0.5) {};
        \draw node[label=above right:{$p$},fill,circle,scale=0.5] at (7,0) {};
        \end{tikzpicture}\\
        \caption{Diagram for Proposition \ref{MorseConicalImpliesMorseFunnelled}}\label{Figure4}

\end{figure}

Our next goal is to show that the funneled limit points of $A$ coincide with the horospherical limit points of $A$. Towards this end, we show that, given a point $x$ in a horoball of a subray, the projection of $x$ to the subray is coarsely the same as the projection to the base ray.

\begin{lemma}\label{HoroballProjectionToSubrayCloseToParentProjection}
Suppose $\alpha$ is an $N$-Morse geodesic ray and let $\alpha'$ be a subray. Suppose $x\in H_{\0}^{N'}(\alpha')$. If $\alpha(\infty)\in\partial X_{\0}^{N''}$, then  $d_{Haus}(\pi_\alpha(x),\pi_{\alpha'}(x))\leq K$, where $K\geq0$ depends only on $N$, $N'$, and $N''$.
\end{lemma}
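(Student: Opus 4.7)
My plan uses the witnessing ray $\beta$ from the horoball condition to show both projections concentrate near the point $\alpha(T) \in \alpha'$, where $T$ is the fellow-traveling threshold.

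First, I upgrade the Morse gauges. By Lemma \ref{MorseSubraysAreMorse}, $\alpha'$ is $M_1$-Morse with $M_1 = M_1(N)$. Pick $\beta : [b, \infty) \to X$ realizing $x \in H^{N'}_\0(\alpha')$, so $\beta(b) = x$, $b \geq a'$, and $\beta \sim_{\delta_{M_1}} \alpha'$. Since $[\0,x]$ is $N'$-Morse and $\beta(\infty) = \alpha(\infty) \in \partial X^{N''}_\0$, Proposition \ref{ParallelRaysAreMorse} shows $\beta$ is $M_2$-Morse with $M_2$ depending only on $N'$ and $N''$. Fix $T \geq b$ minimal with $d(\beta(t), \alpha(t)) \leq \delta_{M_1}$ for all $t \geq T$; then $T \geq a'$, so $\alpha(T) \in \alpha'$, and the triangle inequality gives $d(x, \alpha(T)) \leq (T - b) + \delta_{M_1}$.

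Next, I bound the location of $p = \alpha(t_0) \in \pi_\alpha(x)$. For the upper bound, the fellow-traveling gives $d(x, \alpha(t)) \geq (t - b) - \delta_{M_1}$ for $t \geq T$; combining with $d(x, p) \leq (T - b) + \delta_{M_1}$ yields $t_0 \leq T + 2\delta_{M_1}$ (if $t_0 \geq T$; otherwise $t_0 < T$ trivially). For the lower bound, I invoke the standard fact that nearest-point projection onto a Morse geodesic is coarsely $1$-Lipschitz (a consequence of slim Morse triangles, with constant $C$ depending on $N, N', N''$). Applied to the pair $\beta(b) = x$ and $\beta(T)$, this gives $d(p, \pi_\alpha(\beta(T))) \leq (T - b) + C$; since $\pi_\alpha(\beta(T))$ lies within $2\delta_{M_1}$ of $\alpha(T)$, we conclude $t_0 \geq b - K_1$ where $K_1 = C + 2\delta_{M_1}$. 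The same analysis applied to $\alpha'$ in place of $\alpha$ gives analogous bounds for $t_0'$, where $p' = \alpha(t_0') \in \pi_{\alpha'}(x)$ (with $t_0' \geq a'$ automatic).

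Finally, I case-split on whether $p \in \alpha'$. If $t_0 \geq a'$, then $p \in \alpha'$ is a candidate for $\pi_{\alpha'}(x)$, so $d(x, p) = d(x, p') = d(x, \alpha')$ forces both $p, p' \in \pi_{\alpha'}(x)$; coarse uniqueness of projection (whose diameter is bounded by a constant depending on $N$) gives $d(p, p')$ bounded. If $t_0 < a'$, then $t_0 \in [a' - K_1, a')$, so $d(p, \alpha(a')) \leq K_1$ and $b - a' \leq K_1$. I then apply coarse convexity of $f(t) = d(x, \alpha(t))$ past its minimizer $t_0$ (again standard from slim Morse triangles), which forces the minimizer $t_0'$ of $f$ on $[a', \infty)$ to satisfy $t_0' - a' \leq K''$ for some $K'' = K''(N)$; thus $|t_0 - t_0'| \leq K_1 + K''$ and $d(p, p') \leq K_1 + K''$. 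The same reasoning bounds $d(p', \pi_\alpha(x))$, so taking the maximum of the case constants produces $K = K(N, N', N'')$, and gives the Hausdorff bound.

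The main obstacle is making precise the coarse $1$-Lipschitzness of projection and coarse convexity of the distance function in the Morse setting; both are routine in $\delta$-hyperbolic spaces but must be deduced from slim-triangle estimates for Morse geodesics (as in Proposition 3.6 of \cite{boundarycc2016}), with the constants explicitly controlled in terms of $N$, $N'$, and $N''$. An alternative packaging would embed $x$, $\alpha$, and $\beta$ in a single hyperbolic Morse stratum $X_\0^{\tilde N}$ with $\tilde N = \tilde N(N, N', N'')$ and import the hyperbolic versions of both facts directly.
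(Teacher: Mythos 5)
Your overall strategy rests on three ``standard facts'' about nearest-point projection to a Morse geodesic that are false in a general proper geodesic metric space with the uniform constants you need: (i) coarse $1$-Lipschitzness of $\pi_\alpha$ with additive error depending only on the gauges, (ii) bounded diameter of $\pi_\alpha(x)$ (your ``coarse uniqueness of projection''), and (iii) coarse monotonicity of $t\mapsto d(x,\alpha(t))$ past its minimizer. Morse quasi-geodesics are only \emph{sublinearly} contracting, not strongly contracting (Arzhantseva--Cashen--Gruber--Hume), so the correct error term in each of (i)--(iii) is a sublinear function of $d(x,\alpha)$ rather than a constant --- and $d(x,\alpha)$ is unbounded over $x\in H^{N'}_{\0}(\alpha')$. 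Concretely: glue to $\gamma=\mathbb{R}$, for each $n$, a point $x_n$ joined to $\gamma(-\sqrt n)$ and to $\gamma(\sqrt n)$ by arcs of length $n$. Then $\gamma$ is Morse (a $(K,C)$-quasi-geodesic can only detour through $x_n$ when $\sqrt n\lesssim K^2+C$), yet $\pi_\gamma(x_n)=\{\gamma(-\sqrt n),\gamma(\sqrt n)\}$ has diameter $2\sqrt n$ and $t\mapsto d(x_n,\gamma(t))$ has two separated minima. So (ii) and (iii) fail for any uniform constant, which breaks both your Case 1 (``$d(p,p')$ bounded by coarse uniqueness'') and your Case 2 (``coarse convexity forces $t_0'-a'\le K''$''), and the coarse-Lipschitz step behind your lower bound $t_0\ge b-K_1$ has the same defect. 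Your fallback of working inside a hyperbolic stratum $X^{\tilde N}_{\0}$ does not repair this: strata are hyperbolic only in the four-point sense and are not geodesic subspaces, so the hyperbolic projection lemmas do not import verbatim. This is not a routine verification you have deferred; the statements you defer are false.

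What makes the lemma true is the horoball hypothesis itself --- the witnessing ray $\beta$ with $\beta(b)=x$ and $b\ge a$ --- and your argument never uses $b\ge a$ at the decisive moment. (In the example above, the ray from $x_n$ through $\gamma(\sqrt n)$ has initial parameter $\sqrt n-n<0$, so $x_n$ is excluded from the horoball exactly by the condition $b\ge a$.) The paper's proof leans entirely on this: either $\pi_\alpha(x)\subseteq\alpha'$, in which case the two projection sets coincide, or some $p\in\pi_\alpha(x)$ lies before $\alpha'(a)$, and then a quasi-geodesic concatenation through $\alpha'(a)$ combined with $b\ge a$ shows that $d(x,\alpha)$ and $d(x,\alpha')$ are both bounded by a constant depending only on the gauges, so both projection sets sit in one uniformly bounded ball around $x$ and the Hausdorff estimate is immediate. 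To salvage your structure you would need to replace (i)--(iii) with estimates extracted from $\beta$ and the inequality $b\ge a$, at which point you are essentially reproducing the paper's argument.
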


\begin{proof}
    See Figure \ref{Figure5}. Let $\alpha:[0,\infty)\rightarrow X$ be an $N$-Morse geodesic ray and let $\alpha'=\alpha|_{[a,\infty)}$ for some $a\geq0$. By Lemma \ref{MorseSubraysAreMorse}, $\alpha'$ is $M$-Morse for $M$ depending only on $N$. Let $x\in H_{\0}^{N''}(\alpha')$, thus there exists $\beta:[b,\infty)\rightarrow X$ a geodesic ray with $b\geq a$, $\beta(b)=x$, and $\beta\sim_{\delta_M}\alpha'$. By Proposition \ref{ParallelRaysAreMorse}, $\beta$ is $M'$-Morse for $M'$ depending only on $N$, $N'$, and $N''$. We note that if $\pi_\alpha(x)\subseteq \alpha'$, then $\pi_\alpha(x)=\pi_{\alpha'}(x)$. 
    So, we assume that $\pi_\alpha(x)\not\subseteq\alpha'$. We shall show that in this case, $d(x,\pi_\alpha(x))$ and $d(x,\pi_{\alpha'}(x))$ are both bounded above by an appropriate constant, and this gives the desired result.
    
    Let $p\in\pi_\alpha(x)\setminus\alpha'$, and let $q\in\pi_{\alpha'}(x)$. Without loss of generality, let  $T$ be large enough so that $q\in[\alpha'(a),\alpha'(T)]$ and $d(\alpha'(T),\beta(T))\leq \delta_N$.
    
    Put $\gamma=[\beta(b),\alpha'(a)]*[\alpha'(a),\alpha'(T)]*[\alpha'(T),\beta(T)]$, and note that $\gamma$ is a $(3,4\delta_N)$ quasi-geodesic. Thus there exists $w\in[\beta(b),\beta(T)]$ and $L\geq0$ such that $d(\alpha'(a),w)\leq L$, where $L$ depends only on $M'$ by \cite[Lemma 2.1]{cordes_boundry2016}. Notice now that $|d(\alpha'(a),\alpha'(T))-d(w,\beta(T))|\leq\delta_N+L$. However, since $b\geq a$ and $w\in[\beta(b),\beta(T)]$, we know $|d(\alpha'(a),\alpha'(T))-d(w,\beta(T))|=d(\alpha'(a),\alpha'(T))-d(w,\beta(T))$. But then by the definition of the nearest point projection and the triangle inequality, we have 
    \begin{align*}
    d(x,p)&\leq d(x,q)\leq d(x,\alpha'(a))\leq d(x,w)+d(w,\alpha'(a)) =d(x,\beta(T))-d(w,\beta(T))+d(w,\alpha'(a))\\ 
    &=d(\alpha'(b),\alpha'(T))-d(w,\beta(T))+d(w,\alpha'(a))\leq d(\alpha'(a),\alpha'(T))-d(w,\beta(T))+L\leq \delta_N+L+L.
    \end{align*}
    Therefore, $d(\pi_\alpha(x),x)$ and $d(\pi_{\alpha'}(x),x)$ are both bounded above by $L$, which is a constant depending only on $N$, $N'$, and $N''$, as desired.
\end{proof}

\begin{figure}[h]
\centering
\begin{tikzpicture}
        \draw [->, black] (0,0) to (10,0);
        \draw node[label=below:{$\alpha$}] at (1,0) {};
        \draw node[label=left:{$\alpha(0)$},fill,circle,scale=0.5] at (0,0) {};
        \draw[-,dashed,black] (2,0) to[out=90,in=180] (6,4);
        \draw[-,dashed,black] (2,0) to[out=-90,in=180] (6,-4);
        \draw node[label=above left:{$H^{N'}_{\0}(\alpha')$}] at (3,2.6) {};
        \draw node[label=above :{\textcolor{red}{$\pi_{\alpha'}(x)$}}] at (6.5,-1.5) {};
        \draw [-, red, ultra thick] (6,0) to (7,0);
        \draw node[label=above :{\textcolor{blue}{$\pi_{\alpha}(x)$}}] at (3.5,-1.5) {};
        \draw [-, blue, ultra thick] (3,0) to (4,0);
        \draw node[label=below:{$\alpha(a)$},fill,circle,scale=0.5] at (5,0) {};
        \draw node[label=below:{$=\alpha'(a)$}] at (5,-0.4) {};
        \draw[-, black] (5,3) to[out=-80,in=180] (8,1);
        \draw[->,black] (8,1) to (9.9,1);
        \draw node[label=above:{$\beta(T)$},circle,fill,scale=0.5] at (8,1) {};
        \draw node[label=below:{$\alpha(T)$},circle,fill,scale=0.5] at (8,0) {};
        \draw node[label=below:{$=\alpha'(T)$}] at (8,-0.4) {};
        \draw[-,black] (8,1) to (8,0);
        \draw node[label=right:{$\leq \delta_N$}] at (8,0.5) {};
        \draw node[label=above:{$w$},fill,circle,scale=0.5] at (6,1.52) {};
        \draw [-,red] (5,3) to[out=-90, in=170] (6.5,0);
        \draw node[label=below:{$q$},circle,fill,scale=0.5] at (6.5,0) {};
        \draw [-,blue] (5,3) to (3,0);
        \draw node[label=right:{$\beta(b)=x$},fill,circle,scale=0.5] at (5,3) {};
        \draw node[label=below:{$p$},circle,fill,scale=0.5] at (3,0) {};
        \draw [-,black] (5,0) to (6,1.52);
        \draw node[label=right:{$\leq L$}] at (5.6,1) {};
        \end{tikzpicture}\\
        \caption{Diagram for Lemma \ref{HoroballProjectionToSubrayCloseToParentProjection}}
        \label{Figure5}

\end{figure}

We're now ready to show that Morse funneled limit points are exactly Morse horospherical limit points. We proceed using the same overall strategy as the one found in \cite{swenson2001}, by showing direct generalizations of Lemma \ref{funnel_in_horoball_hyperbolic} and Lemma \ref{horoball_in_funnel_hyperbolic} for the Morse case.

\begin{prop}\label{funnelscontainhoroballs}
    Let $x\in\partial X_{\0}^N$. Let $\alpha:[0,\infty)\rightarrow X$ be an $N'$-Morse geodesic with $\alpha(\infty)=x$. Then for every Morse gauge $N''$, there exists $T\geq0$ such that, for any subray $\alpha'$ of $\alpha$ with $d(\alpha(0),\alpha')\geq T$, we have $H_{\0}^{N''}(\alpha')\subseteq F_{\0}^{N''}(\alpha)$.
\end{prop}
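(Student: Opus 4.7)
My plan is to adapt Swenson's proof of Lemma \ref{horoball_in_funnel_hyperbolic} to the Morse setting, producing a constant $T = T(N, N', N'')$ that witnesses the containment. Let $M$ be the Morse gauge of subrays of $\alpha$, which depends only on $N'$ by Lemma \ref{MorseSubraysAreMorse}. Given $a \geq T$ and $p \in H^{N''}_{\0}(\alpha')$ witnessed by a geodesic ray $\beta:[b,\infty)\to X$ with $\beta(b)=p$, $b\geq a$, and $\beta \sim_{\delta_M} \alpha'$, let $q = \alpha(s) \in \pi_\alpha(p)$; the goal is to prove $d(p,q) \leq s$, which is the funnel condition.

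First I would use Proposition \ref{ParallelRaysAreMorse} to promote $\beta$ to an $M'$-Morse ray for some $M'$ depending only on $N$ and $N''$: this follows because $p \in X^{N''}_{\0}$ together with $\beta(\infty) = \alpha(\infty) \in \partial X^{N}_{\0}$. This brings $\beta$ inside the Morse machinery developed earlier in the section. In parallel, Lemma \ref{HoroballProjectionToSubrayCloseToParentProjection} provides the bound $d_{\mathrm{Haus}}(\pi_\alpha(p), \pi_{\alpha'}(p)) \leq K$ for some $K = K(N, N', N'')$, so if $q' = \alpha(s') \in \pi_{\alpha'}(p)$ with $s' \geq a$, then $s \geq s' - K \geq a - K$; this gives the required lower bound on $s$.

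The main inequality is obtained from the triangle bound
\[
d(p,q) \;\leq\; d(p,\beta(\tau)) + d(\beta(\tau), \alpha(\tau)) + d(\alpha(\tau), q)
\]
at a parameter $\tau$ chosen so that $d(\beta(\tau), \alpha(\tau)) \leq \delta_M$ and $\tau$ is close to $s$. The two outer terms collapse to $(\tau - b) + (\tau - s)$, yielding, after the appropriate choice of $\tau$, a bound of the form $d(p,q) \leq s - b + C$ for some $C = C(N, N', N'')$ — the direct Morse analogue of Swenson's hyperbolic computation. Since $b \geq a \geq T$, choosing $T := C + K$ forces $d(p,q) \leq s - a + C \leq s$, completing the proof.

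The main obstacle is the choice of $\tau$. The fellow-traveling threshold built into the bare definition of $\beta \sim_{\delta_M} \alpha'$ depends on $\beta$ and can in principle exceed $s$, in which case the naive triangle estimate is useless. Overcoming this requires the reparameterization and shift machinery of Proposition \ref{thatonecordesprop} and Corollary \ref{betterswensonlemma4}, applied to the Morse rays $\alpha$ and $\beta$ using the $M'$-Morse upgrade of $\beta$, to produce a uniform fellow-traveling regime whose shift is controlled purely by Morse data. This alignment, which lets us take $\tau$ within an additive Morse-dependent constant of $s$ rather than at an uncontrolled threshold, is the technically delicate part of the argument.
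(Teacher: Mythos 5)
Your overall skeleton matches the paper's: reduce to the estimate $d(p,\pi_\alpha(p))\leq (s-b)+C$ with $C=C(N,N',N'')$, upgrade $\beta$ to a Morse ray via Proposition \ref{ParallelRaysAreMorse}, control $\pi_\alpha(p)$ versus $\pi_{\alpha'}(p)$ via Lemma \ref{HoroballProjectionToSubrayCloseToParentProjection}, and then absorb the constants into the choice of $T$ using $b\geq a\geq T$. The target inequality is the right one. However, the step you yourself flag as ``technically delicate'' is a genuine gap, and the tools you propose do not close it. You need a parameter $\tau$ within a Morse-controlled additive constant of $s$ at which $d(\alpha(\tau),\beta(\tau))$ is uniformly bounded. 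No such $\tau$ need exist: $\beta$ emanates from $p$, which may be at large distance $d$ from $\alpha$, and under the normalization $\beta\sim_{\delta_M}\alpha'$ the matching-parameter distance $d(\alpha(\tau),\beta(\tau))$ only becomes small for $\tau$ roughly $s+d$ (already in $\mathbb{H}^2$ this is the correct order). Since $d$ is exactly the quantity you are trying to bound, any attempt to control the fellow-traveling threshold in terms of $s$ is circular. Proposition \ref{thatonecordesprop} and Corollary \ref{betterswensonlemma4} cannot rescue this: they control the fellow-traveling \emph{constant} after reparameterization, but the location where fellow-traveling begins (the $T_1,T_2$ of Proposition \ref{betterprop2.4}, coming from Corollary \ref{cordescornumber1}) is only bounded in terms of where the rays were already known to be close, which is uncontrolled here. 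With $\tau\approx s+d$ your triangle bound reads $d\leq(\tau-b)+\delta_M+(\tau-s)\approx(s+d-b)+\delta_M+d$, which is vacuous.

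The missing idea is to avoid matching-parameter comparison near $s$ altogether. The paper takes a single large $t$ (past the uncontrolled threshold and past the projection point), observes that $[p,\,\pi_{\alpha'}(p)]*[\pi_{\alpha'}(p),\alpha(t)]*[\alpha(t),\beta(t)]$ is a $(3,4\delta_M)$-quasi-geodesic with endpoints on the Morse geodesic $\beta$, and hence finds a point $q'\in\beta|_{[b,t]}$ with $d(q',\pi_{\alpha'}(p))\leq\lambda$ for $\lambda$ depending only on the Morse data. Because $q'$ lies \emph{on} $\beta$ between $\beta(b)=p$ and $\beta(t)$, one has $d(p,q')=(t-b)-d(q',\beta(t))$, and combining this with $d(q',\beta(t))\geq d(\pi_{\alpha'}(p),\alpha(t))-\lambda-\delta_M$ makes the large parameter $t$ cancel, yielding exactly $d(p,\pi_{\alpha'}(p))\leq (s'-b)+2\lambda+\delta_M$ where $\pi_{\alpha'}(p)=\alpha(s')$. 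From there your concluding arithmetic (together with Lemma \ref{HoroballProjectionToSubrayCloseToParentProjection} to pass from $s'$ to $s$) goes through as you wrote it. Without this cancellation device, the central estimate is not established.
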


\begin{proof}
    See Figure \ref{Figure6}. Let $\alpha'=\alpha|_{[a,\infty)}$ be a subray of $\alpha$. By Lemma \ref{MorseSubraysAreMorse}, $\alpha'$ is $M$-Morse where $M$ depends only on $N'$. Let $y\in H^{N''}_{\0}(\alpha')$. Thus there exists $\beta:[b,\infty)\rightarrow X$ be a geodesic ray such that $b\geq a$, $\beta(b)=y$, and $\beta\sim_{\delta_{M}}\alpha'$. Note that $\beta$ is $M'$-Morse where $M'$ depends only on $N$, $N'$, and $N''$ by Proposition \ref{ParallelRaysAreMorse}. Choose $z\in\pi_\alpha(y)$ such that $d(\alpha(0),z)=d(\alpha(0),\pi_\alpha(y))$, i.e., so that $z$ is closest to $\alpha(0)$. 
    By Lemma \ref{HoroballProjectionToSubrayCloseToParentProjection}, there exists $p\in\pi_{\alpha'}(x)$ so that $d(z,p)\leq L$ for some $L$ depending only on $N$, $N'$, and $N''$. Choose $t$ large enough so that $d(\alpha'(t),\beta(t))=d(\alpha(t),\beta(t))\leq\delta_M$ and $p,\alpha(b)\in[\alpha(a),\alpha(t)]$. Note that $[y,p]*[p,\alpha(t)]*[\alpha(t),\beta(t)]$ is a $(3,4\delta_M)$-quasi-geodesic, thus there exists $q\in[\beta(b),\beta(t)]$ and $\lambda\geq0$ such that $d(p,q)\leq\lambda$ where $\lambda$ depends only on $M'$ by \cite[Lemma 2.1]{cordes_boundry2016}. It suffices to show that $d(y,z)\leq d(\alpha(0),z)$.
    
    Using the triangle inequality and the definition of $\pi_{\alpha}$, we find
    \begin{align*}
    d(y,z) &\leq d(y,p)\leq d(y,q)+d(q,p)\leq d(y,q)+\lambda\\ 
    &= d(y,\beta(t))-d(q,\beta(t))+\lambda=d(\alpha(b),\alpha(t))-d(q,\beta(t))+\lambda\\
    &\leq d(\alpha(a),\alpha(t))-d(p,\alpha(t))+\lambda+\delta_M+\lambda
    =d(\alpha(a),p)+2\lambda+\delta_M\\
    &\leq d(\alpha(a),z)+L+2\lambda+\delta_M.
    \end{align*}
    
    So, if $a\geq L+2\lambda+\delta_M$, we have \[d(y,z)\leq d(\alpha(a),z)+L+2\lambda+\delta_M\leq d(\alpha(a),z)+d(\alpha(0),\alpha(a))=d(\alpha(0),z).\qedhere\]
\end{proof}

\begin{figure}[h]
\centering
\begin{tikzpicture}
        \draw [->, black] (0,0) to (10,0);
        \draw node[label=below:{$\alpha$}] at (1,0) {};
        \draw node[label=left:{$\alpha(0)$},fill,circle,scale=0.5] at (0,0) {};
        \draw [-,dashed] (0,0) to (4,4);
        \draw [-,dashed] (0,0) to (4,-4);
        \draw node[label=above left:{$F^{N'}_{\0}(\alpha)$}] at (2,2) {};
        \draw[-,dashed,black] (2,0) to[out=90,in=180] (6,4);
        \draw[-,dashed,black] (2,0) to[out=-90,in=180] (6,-4);
        \draw node[label=right:{$H^{N'}_{\0}(\alpha')$}] at (6,3.5) {};
        \draw node[label=above :{\textcolor{red}{$\pi_{\alpha'}(y)$}}] at (6.5,-1.5) {};
        \draw [-, red, ultra thick] (6,0) to (7,0);
        \draw node[label=above :{\textcolor{blue}{$\pi_{\alpha}(y)$}}] at (3.5,-1.5) {};
        \draw [-, blue, ultra thick] (3,0) to (4,0);
        \draw node[label=below:{$\alpha(a)$},fill,circle,scale=0.5] at (5,0) {};
        \draw node[label=below:{$=\alpha'(a)$}] at (5,-0.4) {};
        \draw[-, black] (5,3) to[out=-80,in=180] (8,1);
        \draw[->,black] (8,1) to (9.9,1);
        \draw node[label=above:{$\beta(T)$},circle,fill,scale=0.5] at (8,1) {};
        \draw node[label=below:{$\alpha(T)$},circle,fill,scale=0.5] at (8,0) {};
        \draw node[label=below:{$=\alpha'(T)$}] at (8,-0.4) {};
        \draw[-,black] (8,1) to (8,0);
        \draw node[label=right:{$\leq \delta_M$}] at (8,0.5) {};
        \draw node[label=above:{$q$},fill,circle,scale=0.5] at (6,1.52) {};
        \draw [-,red] (5,3) to[out=-90, in=170] (6.5,0);
        \draw node[label=below:{$p$},circle,fill,scale=0.5] at (6.5,0) {};
        \draw [-,blue] (5,3) to (3,0);
        \draw node[label=right:{$\beta(b)=y$},fill,circle,scale=0.5] at (5,3) {};
        \draw node[label=below:{$z$},circle,fill,scale=0.5] at (3,0) {};
        \draw [-,black] (6.5,0) to (6,1.52);
        \draw node[label=right:{$\leq \lambda$}] at (6.2,0.8) {};
        \end{tikzpicture}\\
        \caption{Diagram for Proposition \ref{funnelscontainhoroballs}}\label{Figure6}
\end{figure}

\begin{prop}\label{horoballscontainfunnels}
    Let $x\in\partial X^N_{\0}$. Let $\alpha:[0,\infty)\rightarrow X$ be an $N'$-Morse geodesic with $\alpha(\infty)=x$. Suppose $S=\delta_{N'}$. Define $\alpha'=\alpha|_{[S,\infty)}$. Then $F^{N''}_{\0}(\alpha')\subseteq H^{N''}_{\0}(\alpha)$ for any Morse gauge $N''$.
\end{prop}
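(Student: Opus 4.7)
The plan is to mirror the hyperbolic proof of Lemma \ref{funnel_in_horoball_hyperbolic}, using Corollary \ref{betterswensonlemma4} in place of \cite[Lemma 4]{swenson2001} and exploiting the hypothesis $S = \delta_{N'}$ in place of the hyperbolic choice $6\delta$. Fix $p \in F^{N''}_{\0}(\alpha')$, so that $p \in X^{N''}_{\0}$ and $d(p, \pi_{\alpha'}(p)) \leq d(\alpha'(S), \pi_{\alpha'}(p))$. The goal is to produce a geodesic ray $\beta : [b, \infty) \to X$ with $\beta(b) = p$, $\beta \sim_{\delta_{N'}} \alpha$, and $b \geq 0$, as this places $p$ in $H^{N''}_{\0}(\alpha)$ by definition.

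First, build a geodesic ray $\beta_0 : [0, \infty) \to X$ with $\beta_0(0) = p$ and $\beta_0(\infty) = x$: pick any Morse representative $\gamma$ of $x$ based at $\0$, form the segments $[p, \gamma(n)]$, and extract a subsequential limit using properness of $X$ and the Arzela--Ascoli theorem. The limit ray lies within Hausdorff distance $d(p, \0)$ of $\gamma$, so it represents the same boundary point $x$. Next, apply Corollary \ref{betterswensonlemma4} to the $N'$-Morse ray $\alpha$ and to $\beta_0$, obtaining some $b \in \mathbb{R}$ and an isometry $\rho : [b, \infty) \to [0, \infty)$ with $\rho(b) = 0$ and $\alpha \sim_{\delta_{N'}} \beta_0 \circ \rho$. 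Setting $\beta := \beta_0 \circ \rho$ gives $\beta(b) = p$ and $\beta \sim_{\delta_{N'}} \alpha$, so only the inequality $b \geq 0$ remains to verify.

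To verify $b \geq 0$, choose any $q \in \pi_{\alpha'}(p)$ and write $q = \alpha(s)$ with $s \geq S$. Pick $T$ large enough that $s \leq T$, $b \leq T$, and $d(\alpha(t), \beta(t)) \leq \delta_{N'}$ for all $t \geq T$. Then the triangle inequality, combined with the funnel condition $d(p, q) \leq d(\alpha'(S), q)$, yields
\[
T - b \;=\; d(\beta(T), \beta(b)) \;\leq\; d(\beta(T), \alpha(T)) + d(\alpha(T), q) + d(q, p) \;\leq\; \delta_{N'} + (T - s) + (s - S),
\]
and since $S = \delta_{N'}$ the right-hand side equals $T$, giving $b \geq 0$ as required. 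The main subtlety is the parametrization bookkeeping: one must track exactly what the parameter $b$ from Corollary \ref{betterswensonlemma4} means relative to the basepoint of $\alpha$. Once the convention is fixed that $\beta_0$ starts at $p$ at time $0$, the choice $S = \delta_{N'}$ precisely absorbs the fellow-traveling slack that was $6\delta$ in the $\delta$-hyperbolic setting, and the calculation collapses exactly as in the proof of Lemma \ref{funnel_in_horoball_hyperbolic}.
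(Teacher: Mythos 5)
Your proof is correct and follows essentially the same route as the paper's: choose a projection point $q$ of $p$ onto $\alpha'$, use the funnel inequality $d(p,q)\leq d(\alpha(S),q)$ together with the fellow-traveling estimate at a large time $T$ to bound $T-b\leq T-S+\delta_{N'}=T$, and conclude $b\geq 0$. The only difference is that you explicitly justify the existence of the ray $\beta$ emanating from $p$ (via Arzel\`a--Ascoli and Corollary \ref{betterswensonlemma4}), a step the paper's proof leaves implicit; this is a welcome bit of extra care rather than a different argument.
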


\begin{proof}
    Let $y\in F_{\0}^{N''}(\alpha')$. By definition, $d(y,\pi_{\alpha'}(y))\leq d(\alpha(S),\pi_{\alpha'}(y))$. 
    Let $p\in\pi_{\alpha'}(y)$ such that $d(\alpha(S),p)=d(\alpha(S),\pi_{\alpha'}(y))$, i.e., let $p$ be the element of $\pi_{\alpha'}(y)$ which is closest to $\alpha(S)$. Then $d(y,p)\leq d(\alpha(S),p)$. Construct $\beta:[b,\infty)\rightarrow X$ such that $\beta(b)=y$ and $\beta\sim_{\delta_{N'}}\alpha$. We want to show that $b\geq0$. Choose $T\geq0$ so that $d(\beta(T),\alpha(T))\leq\delta_{N'}$. Then
    \begin{align*}
    T-b &=d(y,\beta(T))\leq d(y,p)+d(p,\alpha(T))+d(\alpha(T),\beta(T))\\
    &\leq d(\alpha(S),p)+d(p,\alpha(T))+\delta_{N'}
    = d(\alpha(S),\alpha(T))+\delta_{N'}\\
    &= d(\alpha(0),\alpha(T))-d(\alpha(0),\alpha(S))+\delta_{N'} =T-S+\delta_{N'}=T.
    \end{align*}
    In summary, $T-b\leq T$, but this immediately shows that $0\leq b$, as desired.
\end{proof}

\begin{figure}[h]
\centering
\begin{tikzpicture}
        \draw [->, black] (0,0) to (10,0);
        \draw node[label=below:{$\alpha$}] at (7,0) {};
        \draw node[label=left:{$\alpha(0)$},fill,circle,scale=0.5] at (0,0) {};
        \draw node[label=right:{$y=\beta(b)$},fill,circle,scale=0.5] at (5,3) {};
        \draw node[label=below:{$\alpha(S)$},fill,circle,scale=0.5] at (1,0) {};
        \draw node[label=below:{$=\alpha'(S)$},scale=0.5] at (1,-0.5) {};
        \draw node[label=below:{$\alpha(T)$},fill,circle,scale=0.5] at (9,0) {};
        \draw node[label=above:{$\beta(T)$},fill,circle,scale=0.5] at (9,0.5) {};
        \draw [->, black]   (5,3) to[out=-85,in=180] (10,0.5);
        \draw [-, black,dashed]   (-1,0) to[out=90,in=180] (5,5);
        \draw [-, black,dashed]   (-1,0) to[out=-90,in=117] (0,-3);
        \draw node[label=above:{$\beta$}] at (7,0.9) {};
        \draw [-, black] (5,3) to (4,0);
        \draw [-, black] (5,3) to (5,0);
        \draw [-, blue, ultra thick] (4,0) to (5,0);
        \draw node[label=below:{\textcolor{blue}{$\pi_{\alpha'}(y)$}}, text=blue] at (5,0) {};
        \draw node[label=below:{$p$},fill,circle,scale=0.5] at (4,0) {};
        \draw [-, black, dashed] (6,5) to (1,0);
        \draw node[label=right:{$F^{N''}_{\0}(\alpha')$},scale=0.5] at (6,4.5) {};
        \draw node[label=right:{$H^{N''}_{\0}(\alpha)$},scale=0.5] at (3,4.5) {};
        \draw [-, black, dashed] (4,-3) to (1,0);
        \draw [-, black] (9,0.5) to (9,0);
        \draw node[label=right:{$\leq\delta_{N'}$}] at (9,0.25) {};
        \end{tikzpicture}\\
        \caption{Diagram for Proposition \ref{horoballscontainfunnels}}\label{Figure7}
\end{figure}

\begin{thm}\label{MorseHoroLimitsAreMorseFunnelLimits}
    Let $x\in\partial X_{\0}$. Then $x$ is a Morse horospherical limit point of $A\subseteq X$ if and only if $x$ is a Morse funneled limit point of $A$. 
\end{thm}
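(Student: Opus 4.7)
The plan is to mirror the hyperbolic-case argument (Corollary \ref{limit_points_hyperbolic}) by deducing the equivalence from the two containments packaged in Propositions \ref{funnelscontainhoroballs} and \ref{horoballscontainfunnels}. Each direction reduces to a single application of the appropriate proposition after first passing to a sufficiently deep subray of a given Morse representative of $x$.

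For the direction funnelled $\Rightarrow$ horospherical I would fix an $N'$-Morse representative $\alpha$ of $x$ and consider the subray $\alpha'=\alpha|_{[\delta_{N'},\infty)}$. Since $\alpha'$ is also a Morse representative of $x$, the funnelled hypothesis applied to $\alpha'$ produces a Morse gauge $N$ and a witness $a\in F^N_\0(\alpha')\cap A$. Proposition \ref{horoballscontainfunnels} immediately upgrades this to $a\in H^N_\0(\alpha)\cap A$, establishing the horospherical condition for $\alpha$. This direction is clean because the shift $\delta_{N'}$ in Proposition \ref{horoballscontainfunnels} depends only on the Morse gauge of $\alpha$ and not on the gauge produced by the hypothesis, so one application suffices.

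For the direction horospherical $\Rightarrow$ funnelled the analogous strategy is to pass to a subray $\alpha'=\alpha|_{[T,\infty)}$, apply horospherical to $\alpha'$ to get a Morse gauge $N$ and a witness $a\in H^N_\0(\alpha')\cap A$, and invoke Proposition \ref{funnelscontainhoroballs} to conclude $a\in F^N_\0(\alpha)\cap A$. The apparent circularity is that the threshold $T$ furnished by Proposition \ref{funnelscontainhoroballs} depends on the Morse gauge $N$, while $N$ itself is produced by applying horospherical to the subray starting at $T$. To break it, I would first apply the horospherical hypothesis to $\alpha$ to produce a Morse gauge $N_0$, then use Proposition \ref{funnelscontainhoroballs} with $N''=N_0$ to fix a threshold $T$, and finally apply horospherical to $\alpha|_{[T,\infty)}$ to produce a witness $a$ with some Morse gauge $N_1$. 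If $N_1\leq N_0$, monotonicity of $H^N_\0$ in $N$ gives $a\in H^{N_0}_\0(\alpha|_{[T,\infty)})\subseteq F^{N_0}_\0(\alpha)$ and we are finished.

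The hard part is the remaining case where $N_1$ is not dominated by $N_0$. I expect to resolve it by showing that the Morse gauge of $[\0,a]$ for any witness $a\in H^{N_1}_\0(\alpha|_{[T,\infty)})\cap A$ can be bounded uniformly in terms of $N'$ and fixed data, via a quadrilateral argument modeled on the end of the proof of Lemma \ref{MorseConicalImpliesMorseFunnelled}: the sides $[\0,\alpha(0)]$, $[\0,p]$ with $p$ a closest-point projection of $a$ onto the subray, and $[p,a]$ have uniformly controlled Morse gauges (using Lemma \ref{MorseSubraysAreMorse} for $[\0,p]$ and Lemma \ref{HoroballProjectionToSubrayCloseToParentProjection} to locate $p$ near the projection onto the parent ray), so \cite[Lemma 2.3]{cordes_boundry2016} produces a uniform Morse gauge $\tilde N$ for the fourth side $[\0,a]$. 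Once such a $\tilde N$ is in hand, Proposition \ref{funnelscontainhoroballs} applied with $N''=\tilde N$ yields the single threshold $T$ needed to complete the argument, closing the circularity.
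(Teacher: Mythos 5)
Your reduction of the theorem to Propositions \ref{funnelscontainhoroballs} and \ref{horoballscontainfunnels} is exactly the route the paper takes, and your treatment of the direction funnelled $\Rightarrow$ horospherical is correct: the shift $S=\delta_{N'}$ in Proposition \ref{horoballscontainfunnels} depends only on the gauge of $\alpha$, and the containment $F^{N''}_{\0}(\alpha')\subseteq H^{N''}_{\0}(\alpha)$ holds for every $N''$, so a single application to the subray suffices. You have also correctly isolated the quantifier problem in the converse direction, which the paper's one-sentence proof does not engage with: the threshold $T$ in Proposition \ref{funnelscontainhoroballs} depends on the stratum gauge $N''$ of the prospective witness, while that gauge is only produced after applying the horospherical hypothesis to the subray $\alpha|_{[T,\infty)}$.

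However, your proposed resolution of that circularity does not work. The quadrilateral argument at the end of Proposition \ref{MorseConicalImpliesMorseFunnelled} succeeds there only because the conical constant gives $d(a,p)\leq K$ with $K$ fixed in advance, so the side $[p,a]$ is Morse for a gauge depending only on $K$. A witness $a\in H^{N_1}_{\0}(\alpha|_{[T,\infty)})$ carries no such bound: horoball points can lie arbitrarily far from the ray (already in $\mathbb{H}^2$), and in a general proper geodesic space a geodesic segment of length $\ell$ is only Morse for a gauge that grows with $\ell$. Hence the side $[p,a]$ of your quadrilateral is not uniformly Morse, and \cite[Lemma 2.3]{cordes_boundry2016} yields no uniform gauge $\tilde N$ for $[\0,a]$. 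Worse, the auxiliary input you invoke, Lemma \ref{HoroballProjectionToSubrayCloseToParentProjection}, has a constant that already depends on the stratum gauge $N'$ of the horoball containing the witness --- precisely the quantity you are trying to bound --- so citing it here reintroduces the circularity rather than breaking it. Closing this direction requires a genuinely different mechanism: either a threshold in Proposition \ref{funnelscontainhoroballs} that is independent of $N''$, or an a priori bound on the stratum of horoball witnesses; neither is supplied by the propositions as stated nor by your quadrilateral.
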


\begin{proof}
    Let $x\in\partial X_\0$. Then there exists a Morse gauge $N$ so that $x\in\partial X^N_\0$. Let $\alpha$ be any Morse geodesic with $\alpha(\infty)=x$, and let $H^{N''}_\0(\alpha)$ and $F^{N''}_\0(\alpha)$ be a Morse horoball about $\alpha$ and a Morse funnel about $\alpha$, respectively. By Propositions \ref{funnelscontainhoroballs} and \ref{horoballscontainfunnels}, there exists a subray $\alpha'$ so that $F^{N''}_\0(\alpha')\subseteq H^{N''}_\0(\alpha)$ and $H^{N''}_\0(\alpha')\subseteq F^{N''}_\0(\alpha)$.

    Now suppose $x$ is horospherical. Then there exists $a\in A$ so that $x\in H^{N''}_\0(\alpha')\subseteq F^{N''}_\0(\alpha)$, and as the funnel $F^{N''}_\0(\alpha)$ was arbitrary, $x$ is funneled. Similarly, suppose $x$ is funneled. Then there exists $a\in A$ so that $x\in F^{N''}_\0(\alpha')\subseteq H^{N''}_\0(\alpha)$, and as the funnel $F^{N''}_\0(\alpha)$ was arbitrary, $x$ is horospherical.
\end{proof}


\section{Limit Set Conditions For Stability}

In this section, we show that the horospherical limit point condition, combined with the limit set being compact, is enough for to show that the group action on the weak convex hull is cobounded. The main idea behind this argument is to show the contrapositive: when the group action is not cobounded, then geodesic rays in the space eventually end up very far from the orbit of the group. We begin by showing the following helpful fact, which states that if a group acts non-coboundedly on the weak convex hull of its limit set, there exists a sequence of points $p_n$ in the weak convex hull that ``maximally avoids" the orbit.

\begin{lemma}\label{noncoboundedactionsarewellspreadinweakconvexhull}
    Suppose $X$ is a proper geodesic metric space and suppose that $H$ acts properly on $X$ by isometries. Assume that $\Lambda H\neq\emptyset$. If the action $H\curvearrowright WCH(\Lambda H)$ is not cobounded, then there exists an increasing sequence of positive integers, $(n_i)_{i}$, such that for each $i\in\mathbb{Z}_{\geq1}$ there exists $p_i\in WCH(\Lambda H)$ satisfying
    \begin{enumerate}
        \item $B_{n_i}(p_i)\cap H\0=\emptyset$,
        \item $d(p_i,\0)\leq n_i+1$.
    \end{enumerate}
\end{lemma}

\begin{proof}
    Set $n_0=1$. We define $q_i$ and $n_i$ for $i\geq 1$ via an inductive process. Since the action of $H\curvearrowright WCH(\Lambda H)$ is not cobounded, there exists a point $q_i\in WCH(\Lambda H)$ such that $n_{i-1}+1<d(H\0,q_i)$. By the definition of $WCH(\Lambda)$, there exists a bi-infinite Morse geodesic $\gamma$ with $\gamma(\pm\infty)\in\Lambda H$ such that $q_i\in\gamma$. Set $n_i$ to be the unique positive integer such that $n_i<d(H\0,q_i)\leq n_i+1$. Note that the sequence $(n_i)_i$ is increasing because $n_{i-1}+1\leq n_i$. 
    
    Since $d(H\0,q_i)\leq n_i+1$ there exists $h_i\in H$ so that $d(q_i,h_i\0)\leq n_i+1$. Recalling that the action of $H$ on $X$ is by isometries, we define $p_i=h_i^{-1}q_i$, and so $B_{n_i}(p_i)\cap H\0=\emptyset$,
	and $d(\0,p_i)\leq n_i+1$. Finally, by \cite[Lemma 3.3]{boundarycc2016}, $h_i^{-1}\gamma$ is a bi-infinite Morse geodesic with endpoints in $\Lambda H$, and so $p_i\in WCH(\Lambda H)$.
\end{proof}

We note that, under the additional assumption that $\Lambda H$ is compact and that every point in $\Lambda H$ is conical, we get a stronger conclusion to this lemma, namely, we can take $n_i=i$ for large $i$. We formally state and prove this observation.

\begin{lemma}[Sliding Spheres]\label{sliding spheres}
    Suppose $X$ is a proper geodesic metric space, and suppose that $H$ acts properly on $X$ by isometries.
    Assume that $\Lambda H\neq\emptyset$, every point of $\Lambda H$ is a conical limit point of $H\0$, and that $\Lambda H\subseteq \partial X_{\0}^N$ for some Morse gauge $N$.
    If the action $H\curvearrowright WCH(\Lambda H)$ is not cobounded, 
    there exists a sequence of points $p_n\in WCH(\Lambda H)$ such that, for sufficiently large $n$, $B_n(p_n)\cap H\0=\emptyset$ and $\0\in B_{n+1}(p_n)$.
\end{lemma}

\begin{proof}
    Let $K>0$ be the conical limit point constant. Let $n\in\mathbb{N}$ with $n>K+1$. By \cite[Corollary 5.8]{liu_2021}, we may assume that $\Lambda H$ has at least two distinct points. 
    Since $H\curvearrowright WCH(\Lambda H)$ is not cobounded, there exists $p\in WCH(\Lambda H)$ 
	with $d(p,H\0)>n$. By definition, $p\in\gamma$ for some bi-infinite geodesic $\gamma$ with 
	$\gamma(\pm \infty)\in\Lambda H$. Since $\Lambda H\subseteq \partial X_{\0}^N$, we have by \cite[Proposition 4.2]{boundarycc2016} that $\gamma$ is is Morse for some Morse gauge depending only on $N$. Since every point in $\Lambda H$ is a conical limit point of $H\0$,
	there exists $h'\in H$ such that $d(h'\0,\gamma)<K$. Put $q\in\pi_\gamma(h'\0)$.
	 
	We may assume that $\gamma(s)=q$ and $\gamma(s')=p$ with $s<s'$.
	Let $A=\{r\in[s,s']~:~n<d(\gamma(r),H\0)\}.$ (Equivalently, one may define
	$A=\{r\in[s,s']~:~B_n(\gamma(r))\cap H\0=\emptyset\}$.) Note that $s'\in A$.
	Put $t=\inf A$. By the definition of $t$, we have $n\leq d(\gamma(t),H\0)$. See Figure \ref{FigureBallsAvoidingNonCoboundedActions}. We now claim that $d(\gamma(t),H\0)<n+1$. 
	 
	Suppose for contradiction that $n+1\leq d(\gamma(t),H\0)$. By the triangle inequality,
	$n\leq d(\gamma(t-1),H\0)$. So if $t-1\in[s,s']$, then $t-1\in A$, however $t=\inf A$. Thus $t-1\not\in [s,s']$.
	Therefore, $t\in[s,s+1]$, and so 
    \[n+1\leq d(\gamma(t),h\0)\leq d(\gamma(t),h'\0)\leq d(\gamma(t),q)+d(q,h'\0)
	= d(\gamma(t),\gamma(s))+d(q,h'\0)\leq 1+K\leq n,\] a contradiction.
	 
	Therefore, there exists $h\in H$ such that $h\0\in B_{n+1}(\gamma(t))$, but $B_n(\gamma(t))\cap H\0=\emptyset$.
	Put $p_n=h^{-1}(\gamma(t))$. By \cite[Lemma 3.3]{boundarycc2016}, $h\gamma$ is a bi-infinite Morse geodesic with endpoints in $\Lambda H$, and since the action of $H$ on $X$ is by isometries, $B_n(p_n)\cap H\0=\emptyset$,
	and $\0\in B_{n+1}(p_n)$, as desired.
\end{proof}

\begin{figure}[h]
\centering
\begin{tikzpicture}
        \draw [<->, black] (0,0) to (10,0);
        \draw [-] (8,0) to (8,1.5);
        \draw [-] (4,0) to (4,1.5);
        \draw node[label=right:{$n$}] at (4,0.75) {};
        \draw node[label=right:{$n$}] at (8,0.75) {};
        \draw node[label=below:{\textcolor{red}{$\gamma(A)$}}] at (6,0) {};
        \draw [-, red, ultra thick] (4,0) to (8,0);
        \draw node[label=below:{$\gamma(s')=p$},fill,circle,scale=0.5] at (8,0) {};
        \draw node[label=below:{$\gamma(t)$},fill,circle,scale=0.5] at (4,0) {};
        \draw [dashed] (8,0) circle [radius=1.5];
        \draw [dashed] (4,0) circle [radius=1.5];
        \draw node[label=above:{$h\0$},fill,circle,scale=0.5] at (2.6,1) {};
        \draw node[label=above:{$\gamma(s)=q$},fill,circle,scale=0.5] at (1,0) {};
        \draw node[label=below:{$h'\0$},fill,circle,scale=0.5] at (1,-0.5) {};
        \draw[-] (1,0) to (1,-0.5);
        \draw node[label=right:{$H\0$},fill,circle,scale=0.5] at (0.5,2) {};
        \end{tikzpicture}\\
        \caption[Diagram for Lemma \ref{sliding spheres}.] {Diagram for Lemma \ref{sliding spheres}. We can think of this proof as sliding the ball on the right towards the left until it is ``up against" the orbit $H\0$, such as the ball centered at $\gamma(t)$.}\label{FigureBallsAvoidingNonCoboundedActions}
\end{figure}

We now prove that (4) implies (2) in the language of Theorem \ref{MyMainTheorem}. We show that, if the action is not cobounded on the weak convex hull, then using Lemma \ref{noncoboundedactionsarewellspreadinweakconvexhull} we can find a sequence of points $p_i$ which maximally avoid the orbit of $H$. However, this sequence of points defines a new ray $\gamma$ with $\gamma(\infty)\in\Lambda H$. Then using the horospherical point assumption, we find an orbit point close to $p_i$, a contradiction.

\begin{thm} \label{AllMorseHoroImpliesCoboundedAction}

    Suppose $X$ is a proper geodesic metric space and suppose $H$ acts properly on $X$ by isometries.
	Assume that $\Lambda H\neq \emptyset$, every point of $\Lambda H$ is a horospherical limit point of $H\0$, and that there exists a Morse gauge $N$ such that $\Lambda H\subset \partial X^N_{\0}$.
	Then the action of $H\curvearrowright WCH(\Lambda H)$ is cobounded.

\end{thm}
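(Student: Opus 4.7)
The plan is to argue by contradiction. Suppose $H\curvearrowright WCH(\Lambda H)$ is not cobounded; I want to derive a contradiction with the horospherical hypothesis. The argument would proceed in three stages: first produce a sequence $p_n\in WCH(\Lambda H)$ that maximally avoids the orbit $H\0$, then extract an Arzel\`a--Ascoli limit ray $\alpha$ from the geodesics $[\0,p_n]$ whose endpoint lies in $\Lambda H$, and finally apply horospherical to $\alpha$ to produce an orbit point unavoidably close to $p_n$ for $n$ large.

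For the first stage I would invoke a horospherical analogue of Lemma~\ref{noncoboundedactionsarewellspreadinweakconvexhull} to get $p_n\in WCH(\Lambda H)$ with $B_n(p_n)\cap H\0=\emptyset$ and $\0\in B_{n+1}(p_n)$. That lemma is stated under a conical hypothesis, but the only place conicity enters its proof is to supply an orbit point within a uniform distance of a biinfinite Morse geodesic $\gamma\subseteq WCH(\Lambda H)$. Applying horospherical to a Morse subray of $\gamma$ gives a ray $\beta$ from some $h'\0\in H\0$ asymptotically fellow-travelling $\gamma$, and Corollary~\ref{betterswensonlemma4} combined with Proposition~\ref{ParallelRaysAreMorse} force $d(h'\0,\gamma)$ to be uniformly bounded. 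With this replacement, the sliding-spheres computation runs essentially verbatim.

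For the second stage, Lemma~\ref{LimitSetsInstratumHaveWCHInAstratum} places all $p_n$ in a single Morse stratum $X_{\0}^{N'}$, so the segments $[\0,p_n]$ are uniformly $N'$-Morse; properness of $X$ and Arzel\`a--Ascoli then produce, along a subsequence, an $N'$-Morse geodesic ray $\alpha:[0,\infty)\to X$ based at $\0$ as a limit uniformly on compact sets. Since $d(\0,p_n)\to\infty$ and each $p_n\in WCH(\Lambda H)$, the endpoint $\alpha(\infty)$ is a limit point of $WCH(\Lambda H)$, hence lies in $\Lambda H$ by Lemma~\ref{WeakConvexHullAndLimitSetContainments}.

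For the third stage, choose a constant $L$ large (to be fixed below) and apply horospherical to the subray $\alpha|_{[L,\infty)}$, obtaining an orbit point $h\0=\beta(b)$ with $b\geq L$ and $\beta\sim_\delta\alpha|_{[L,\infty)}$. Fellow-travelling combined with triangle inequalities (using Proposition~\ref{ParallelRaysAreMorse} for uniform Morse control) produces constants $C_1,C_2$ depending only on the Morse gauges so that, setting $D=d(\0,h\0)$, we have $|D-b|\leq C_1$ and $d(h\0,\alpha(D))\leq C_2$. Fix $L>C_1+C_2+2$, so $D>C_2+2$. Choose $n$ along the subsequence large enough that $D\leq d(\0,p_n)$ and $d([\0,p_n](D),\alpha(D))\leq 1$. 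Then $d([\0,p_n](D),p_n)=d(\0,p_n)-D\leq n+1-D$, so
\begin{align*}
d(h\0,p_n)&\leq d(h\0,\alpha(D))+d(\alpha(D),[\0,p_n](D))+d([\0,p_n](D),p_n)\\
&\leq C_2+1+(n+1-D)<n,
\end{align*}
contradicting $B_n(p_n)\cap H\0=\emptyset$. The hardest step is the first: adapting the sliding-spheres lemma from its stated conical hypothesis to the weaker horospherical hypothesis actually in force. Once the $p_n$ are in hand, the rest is a relatively routine Arzel\`a--Ascoli plus fellow-travelling computation, exactly the kind of uniform control Section~2 has been set up to provide.
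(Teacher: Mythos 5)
Your overall architecture (sliding spheres, Arzel\`a--Ascoli limit ray with endpoint in $\Lambda H$ via Lemma \ref{WeakConvexHullAndLimitSetContainments}, then a triangle-inequality contradiction with $B_n(p_n)\cap H\0=\emptyset$) matches the paper's. But there is a genuine gap that appears twice, in stage 1 and again in stage 3: you assert that membership of an orbit point in a Morse horoball forces that orbit point to lie within a \emph{uniformly bounded} distance of the defining ray --- ``$d(h'\0,\gamma)$ uniformly bounded'' in stage 1, and ``$d(h\0,\alpha(D))\leq C_2$'' with $C_2$ depending only on Morse gauges in stage 3. This does not follow from the definition of $H^{N}_{\0}(\alpha')$: that definition only supplies a ray $\beta$ with $\beta(b)=h\0$, $b\geq a$, and $\beta\sim_{\delta}\alpha'$, and asymptotic fellow-travelling (Corollary \ref{betterswensonlemma4}, Proposition \ref{ParallelRaysAreMorse}) says nothing about the distance from the initial point $\beta(b)$ to $\alpha$. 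Already in $\mathbb{H}^2$, a horoball about $\alpha(\infty)$ contains points at arbitrarily large distance from $\alpha$ whose outgoing rays are asymptotic to $\alpha$ with parameter $b\geq a$; the Morse-stratum restriction changes nothing there since the whole space is one stratum. If your claim were true, every horospherical limit point would automatically be conical with a uniform constant, collapsing exactly the distinction the theorem is meant to bridge. Consequently your adapted sliding-spheres lemma does not produce the point $q\in\gamma$ with $d(q,H\0)\leq K$ that the sliding argument needs, and your final estimate $d(h\0,p_n)<n$ rests on the unavailable bound $d(h\0,\alpha(D))\leq C_2$.

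The paper's proof handles stage 3 differently: it first converts the horospherical hypothesis into the funnelled condition via Theorem \ref{MorseHoroLimitsAreMorseFunnelLimits}, and then uses the funnel inequality $d(h\0,\pi_{\alpha'}(h\0))\leq d(\alpha(T),\pi_{\alpha'}(h\0))$. This bound is \emph{not} uniform --- it grows linearly in the projection parameter $t_0$ --- but the computation is arranged so that this linear term cancels against $d(p_n,\gamma_n(t_0))\approx d(\0,p_n)-t_0$, leaving $d(p_n,h\0)\leq n-1$. That cancellation is the essential mechanism your proposal is missing. (Your instinct in stage 1 that Lemma \ref{noncoboundedactionsarewellspreadinweakconvexhull} is stated under a conical hypothesis while only horosphericality is in force is a fair observation about the paper, but the repair has to come from the funnel picture or from first establishing conicality, not from pretending horoball points are uniformly close to the ray.)
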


\begin{proof}
	For contradiction, assume that $H\curvearrowright WCH(\Lambda H)$ is not a cobounded action. 
	By Lemma \ref{noncoboundedactionsarewellspreadinweakconvexhull}, there exists a sequence of points $p_i\in WCH(\Lambda H)$ and an increasing sequence of positive integers $(n_i)_i$ such that $B_{n_i}(p_i)\cap H\0=\emptyset$, and $\0\in B_{n_i+1}(p_i)$. 
	Let $\gamma_{i}:[0,d(0,p_i)]\rightarrow X$ be a geodesic connecting $\0$ and $p_i$ with $\gamma_{i}(0)=\0$. 
    Notice that since $\Lambda H\subset\partial X^N_{\0}$, we have that $\gamma_i$ is $N'$-Morse for some $N'$ depending only on $N$. 
    By restricting to a subsequence, we may assume that 
	$\gamma_{i}$ converges, uniformly on compact subsets, to an $N'$-Morse geodesic ray $\gamma$ with $\gamma(0)=\0$.
  
	By construction and by Lemma \ref{WeakConvexHullAndLimitSetContainments}, $\gamma(\infty)\in \Lambda(WCH(\Lambda H))\subseteq\Lambda H$. So, by \cite[Corollary 2.6]{cordes_boundry2016}, there exists an $N$-Morse geodesic ray $\alpha$ with 
	$\alpha(0)=\0$ and $d(\alpha(t),\gamma(t))<D$ for all $t\geq 0$, where $D\geq0$ is
	a constant that depends only on $N$.  Let $T=2D+4$, and put $\alpha'=\alpha_{[T,\infty)}$. Since $\alpha'(\infty)\in \Lambda H$, and so by Theorem \ref{MorseHoroLimitsAreMorseFunnelLimits}, $\alpha'(\infty)$ is a funneled limit point of $H$. Thus there exists $h\in H$ so that $h\0\in F_{\0}^N(\alpha')$. Let $t_0=\min\{s:\alpha'(s)\in\pi_{\alpha'}(h\0)\}$. Since the sequence $\gamma_i$ converges uniformly on compact sets to $\gamma$, we may choose $i$ large enough so that $d(\gamma_i(t_0),\gamma(t_0))\leq 1$. See Figure \ref{FigureHoroLimitsImplieCobouned}.

 \begin{figure}[h]
\centering
\begin{tikzpicture}
        \draw[->, black] (0,2) to (10,2);
        \draw node[label=below:{$\alpha(\infty)$}] at (10,2) {};
        \draw node[label=below:{$\gamma(\infty)$}] at (10,4) {};
        \draw node[label=below:{$\0$},fill,circle,scale=0.5] at (0,2) {};
        \draw [-, black] (0,2)  to[out=90,in=180] (2,4);
        \draw [->, black] (2,4) to (10,4);
        \draw [-, black] (0,2) to[out=90, in=180] (2,4.5);
        \draw [-, black] (2,4.5) to (6,4.5);
        \draw [-, black] (6,4.5) to[out=0, in=-90] (8,6);
        \draw node[label=right:{$p_i$},fill,circle,scale=0.5] at (8,6) {};
        \draw node[label=above left:{$\gamma_i(t_0)$},fill,circle,scale=0.5] at (6,4.5) {};
        \draw node[label=below left:{$\gamma(t_0)$},fill,circle,scale=0.5] at (6,4) {};
        \draw node[label=above left:{$\alpha(t_0)$},fill,circle,scale=0.5] at (6,2) {};
        \draw node[label=left:{$h\0$},fill,circle,scale=0.5] at (6,0) {};
        \draw node[label=above:{$\alpha(T)$},fill,circle,scale=0.5] at (2,2) {};
        \draw node[label=below:{$2D+4$}] at (1,1.4) {};
        \draw [|-, black] (0,1) to (0.4,1);
        \draw [-|, black] (1.6,1) to (2,1);
        \draw [-, black] (6,0) to (6,4.5);
        \draw node[label=right:{$\leq 1$}] at (6,4.25) {};
        \draw node[label=right:{$\leq D$}] at (6,3) {};
        \draw node[label=right:{$\leq d(\alpha(T),\alpha(t_0))$}] at (6,1) {};
        \end{tikzpicture}\\
        \caption{Diagram for Theorem \ref{AllMorseHoroImpliesCoboundedAction}}\label{FigureHoroLimitsImplieCobouned}

\end{figure}
	 
	By the triangle inequality we have that $d(\gamma_i(t_0),\alpha'(t_0))\leq D+1$, and therefore $|d(0,\gamma_i(t_0))-d(0,\alpha(t_0))|\leq D+1$. Also, by construction we have that $d(h\0,\alpha'(t_0))\leq d(\alpha(T),\alpha(t_0))$. Therefore we have
    \begin{align*}
    d(p_i,h\0) &\leq d(p_i,\gamma_i(t_0))+d(\gamma_i(t_0),\alpha'(t_0))+d(\alpha('t_0),h_0)\\
    &\leq d(0,\gamma_i(t_0))+(D+1)+ d(\alpha(T),\alpha(t_0))\\
    &=d(\0,p_i)-d(\0,\gamma_i(t_0))+(D+1)+d(0,\alpha(t_0))-d(0,\alpha(T))\\
    &\leq (n_i+1)+(D+1)+(D+1)-(2D+4)\leq n_i-1.
    \end{align*}

    However, this contradicts the assumption that $B_{n_i}(p_i)\cap H\0=\emptyset$.
\end{proof}

We now present an alternate definition of a conical limit point which agrees with Definition \ref{conical limit point def} in the case where $\Lambda A$ is compact, and requires us to only consider of the geodesic rays which emanate from the given basepoint. By Corollary \ref{LimitSetsInstratumAreCompact} and by \cite[Lemma 4.1]{boundarycc2016}, the requirement that $\Lambda H$ is compact is equivalent to the requirement that $\Lambda H$ is contained in the boundary of a single Morse stratum. 

\begin{prop}\label{ConicalLimitPointsOnlyNeedTailsOfRays}
    Let $X$ be a proper, geodesic metric space. Let $Y\subseteq X$. Suppose $\Lambda Y\not=\emptyset$. Then the following are equivalent:
    
    \begin{enumerate}
        \item $x\in\partial X_{\0}$ is a conical limit point of $Y$ 
        \item There exists $K>0$ such that, for every $N$-Morse geodesic ray $\alpha:[0,\infty)\rightarrow X$ with $\alpha(0)=\0$ and $\alpha(\infty)=x$, and for every $T>0$, there exists $y\in Y$ such that $y\in \Nhbd_K(\alpha')$, where $\alpha':[0,\infty)\rightarrow X$ is defined by $\alpha'(t)=\alpha(t+T)$.
    \end{enumerate}
\end{prop}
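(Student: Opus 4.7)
The plan is to prove the two implications separately, with the substantive content in (2) $\Rightarrow$ (1). The direction (1) $\Rightarrow$ (2) is essentially immediate: given that $x$ is a conical limit point of $Y$ with constant $K$, take any $N$-Morse geodesic ray $\alpha$ from $\mathfrak{o}$ to $x$ and any $T > 0$. By Lemma \ref{MorseSubraysAreMorse} the tail $\alpha'$ defined by $\alpha'(t) = \alpha(t+T)$ is itself a Morse geodesic ray, and $\alpha'(\infty) = x$, so (1) directly produces $y \in Y$ with $y \in \Nhbd_K(\alpha')$. The same constant $K$ witnesses (2).

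For (2) $\Rightarrow$ (1), the strategy is to reduce an arbitrary Morse ray $\beta$ with $\beta(\infty) = x$ to a fixed reference ray based at $\mathfrak{o}$, using the asymptotic fellow-traveling of Corollary \ref{betterswensonlemma4}. Since $x \in \partial X_\0$ and we want to apply (2) non-vacuously, there is at least one $N$-Morse ray $\alpha:[0,\infty) \to X$ from $\0$ to $x$; fix such an $\alpha$ once and for all. (Applying (2) with $T \to \infty$ produces a sequence in $Y$ converging to $x$, which in particular shows $x \in \Lambda Y$, consistent with the hypothesis $\Lambda Y \neq \emptyset$.)

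Now given an arbitrary Morse geodesic ray $\beta$ with $\beta(\infty) = x$, Corollary \ref{betterswensonlemma4} produces an isometric reparameterization $\tilde \beta := \beta \circ \rho$ of $\beta$ such that $\alpha \sim_{\delta_N} \tilde \beta$; explicitly, there is $T_0 \geq 0$ with $d(\alpha(t), \tilde \beta(t)) \leq \delta_N$ for all $t \geq T_0$. Applying (2) with $T = T_0$ yields $y \in Y$ and $t^* \geq T_0$ with $d(y, \alpha(t^*)) \leq K$. The triangle inequality then gives $d(y, \tilde \beta(t^*)) \leq K + \delta_N$, so $y \in \Nhbd_{K+\delta_N}(\beta)$.

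Setting $K' := K + \delta_N$ establishes (1). The main subtlety, and the reason Corollary \ref{betterswensonlemma4} is the key ingredient, is achieving uniformity of $K'$ across all Morse rays $\beta$ converging to $x$: by fixing a single reference ray $\alpha$, the fellow-traveling constant $\delta_N$ depends only on the Morse gauge $N$ of $\alpha$ and not on $\beta$, so the same $K' = K + \delta_N$ works for every Morse ray $\beta$ with $\beta(\infty) = x$.
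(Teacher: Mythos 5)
Your proposal is correct and follows essentially the same route as the paper: the (1)$\Rightarrow$(2) direction via Lemma \ref{MorseSubraysAreMorse}, and the (2)$\Rightarrow$(1) direction by fixing a reference ray $\alpha$ based at $\0$, invoking Corollary \ref{betterswensonlemma4} to reparameterize an arbitrary Morse ray $\beta$ so that it $\delta_N$-fellow-travels $\alpha$ past some time $T$, applying (2) to the tail $\alpha|_{[T,\infty)}$, and concluding by the triangle inequality with the uniform constant $K+\delta_N$. Your closing remark on why the constant is uniform over all rays $\beta$ is exactly the point the paper leaves implicit.
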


\begin{proof}
    Showing that (1) implies (2) is a direct consequence of Lemma \ref{MorseSubraysAreMorse} and Definition \ref{conical limit point def}.
    
    Instead assume (2). Let $\beta:[b,\infty)\rightarrow X$ be an $N'$-Morse ray with $\beta(\infty)=x$. Let $\alpha:[0,\infty)\rightarrow X$ an $N$-Morse geodesic ray with $\alpha(0)=\0$ and $\alpha(\infty)=x$. Without loss of generality, by Cor \ref{betterswensonlemma4} there exists $T>0$ such that $d(\alpha(t),\beta(t))<\delta_N$ for all $t>T$. Put $\alpha':[0,\infty)\rightarrow X$ via $\alpha'(t)=\alpha(t+T)$. By hypothesis, there exists $y\in Y$ such that $y\in \Nhbd_K(\alpha')$. 
    Say $s\in[0,\infty)$ such that $d(\alpha'(s),y)<K$, so via the triangle inequality we have 
    \[d(\beta(s+T),y)\leq d(\beta(s+t),\alpha(s+t))+d(\alpha'(s),y)\leq \delta_N+K.\] Thus, $y\in \Nhbd_{K+\delta_N}(\beta)$, which shows (1).
\end{proof}

We conclude this section by showing that $(3) \Rightarrow (2)$ for Theorem \ref{MyMainTheorem}, which was first shown in \cite[Corollary 1.14]{boundarycc2016}, however here we present a direct proof that does not rely on \cite[Theorem 1.1]{boundarycc2016}.

\begin{prop}\label{BoundaryCCImpliesConical}
    Let $X$ be a proper geodesic metric space and let $H$ be a finitely generated group of isometries of $X$ such that the orbit map $H\rightarrow X$ via $h\mapsto h\0$ is a stable mapping. If there exists a Morse gauge $N$ so that $\Lambda H\subseteq \partial X_{\0}^N$, then every $x\in \Lambda H$ is a conical limit point of $H\0$.
\end{prop}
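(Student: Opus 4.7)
The plan is to verify condition (2) of Proposition \ref{ConicalLimitPointsOnlyNeedTailsOfRays}: produce a uniform $K > 0$ such that for every $N$-Morse geodesic ray $\alpha\colon[0,\infty)\to X$ based at $\0$ with $\alpha(\infty)=x$ and every $T>0$, some orbit point of $H$ lies in $\Nhbd_K(\alpha|_{[T,\infty)})$. Conicality of $x$ will then follow from that proposition.

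The driving engine is a Morse quasi-convexity property of the orbit. Stability of the orbit map supplies a Morse gauge $N_0$ and quasi-isometric embedding constants $(\lambda,\epsilon)$ such that any two orbit points are joined by an $N_0$-Morse quasi-geodesic, while the image of a word geodesic in $H$ from $e$ to $h$ is a $(\lambda,\epsilon)$-quasi-geodesic in $X$ passing through orbit points $g_k\0$ with consecutive spacing at most $M := \max_{s\in S} d(\0, s\0)$, where $S$ is a finite generating set of $H$. Applying the Morse property of the $N_0$-Morse quasi-geodesic between $\0$ and $h\0$ to both the geodesic $[\0,h\0]$ and this image word geodesic, combined with a coarse-continuity comparison for nearest-point projections, yields a constant $C>0$ (depending only on $N_0, \lambda, \epsilon, M$) so that $[\0, h\0]\subseteq \Nhbd_C(H\0)$ for every $h\in H$.

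Next, extract from the definition of $\Lambda H$ a sequence $h_n\in H$ with $[\0, h_n\0]\to\alpha_0$ uniformly on compact sets, where $\alpha_0$ is an $N$-Morse ray based at $\0$ with $\alpha_0(\infty)=x$, using the hypothesis $\Lambda H\subseteq\partial X_\0^N$. For any $T'>0$ and $n$ large enough, $d([\0,h_n\0](T'), \alpha_0(T'))<1$; the Morse quasi-convexity step then furnishes an orbit point within $C$ of $[\0,h_n\0](T')$, hence within $C+1$ of $\alpha_0(T')$. Thus for any $T>0$, picking $T'>T$ places an orbit point in $\Nhbd_{C+1}(\alpha_0|_{[T,\infty)})$.

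Finally, transfer to an arbitrary $N$-Morse ray $\alpha$ from $\0$ to $x$ using Corollary \ref{betterswensonlemma4}: there is an isometry $\rho$ so that $\alpha_0\sim_{\delta_N} \alpha\circ\rho$, with $\delta_N$ depending only on $N$. Choosing $T'$ sufficiently large with $\rho(T')\geq T$ places the orbit point found above within $C+1+\delta_N$ of $\alpha(\rho(T'))\in\alpha|_{[T,\infty)}$. Taking $K := C+1+\delta_N$ thus verifies condition (2), and Proposition \ref{ConicalLimitPointsOnlyNeedTailsOfRays} gives the desired conclusion. The principal obstacle is the Morse quasi-convexity claim in the second paragraph: stability directly supplies a Morse quasi-geodesic between orbit points, but extracting that $[\0, h\0]$ sits uniformly close to the orbit itself (rather than just to the Morse quasi-geodesic) requires a two-sided Hausdorff comparison that does not follow immediately from the one-sided Morse property alone.
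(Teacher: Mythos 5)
Your proposal follows essentially the same route as the paper's proof: reduce to condition (2) of Proposition \ref{ConicalLimitPointsOnlyNeedTailsOfRays}, use stability to place the geodesics $[\0,h_n\0]$ from the definition of $\Lambda H$ uniformly close to the orbit via the image of a word geodesic in $H$, and transfer to an arbitrary $N$-Morse ray with Corollary \ref{betterswensonlemma4}. The obstacle you flag at the end --- upgrading the one-sided Morse containment to the statement that $[\0,h\0]$ itself lies near the orbit quasi-geodesic --- is silently used in the paper's argument as well, and is resolved by the standard connectedness argument showing that a geodesic lies in a bounded neighborhood of any quasi-geodesic sharing its endpoints, so it is not a genuine gap.
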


\begin{proof}
    Let $x\in\Lambda H$, and let $\alpha:[0,\infty)\rightarrow X$ be an $N$-Morse geodesic ray with $\alpha(\infty)=x$, $\alpha(0)=\0$. Let $\alpha'=\alpha|_{[a,\infty)}$ be a subray of $\alpha$. Notice that $\alpha'$ is $N'$-Morse where $N'$ depends only on $N$ by Lemma \ref{MorseSubraysAreMorse}. By Proposition \ref{ConicalLimitPointsOnlyNeedTailsOfRays}, it suffices to show that there exists some $K\geq 0$, depending only on $N'$ and $H$, so that $H\0\cap \Nhbd_K(\alpha')\not=\emptyset$.
    
    Since $H$ is a stable subgroup of isometries on $X$, we have that for any $h\in H$, there exists a $(\lambda,\lambda)$-quasi-geodesic $\gamma$ from $\0$ to $h\0$ such that, for any $p\in\gamma$, $B_{2\lambda}(p)\cap H\0\not=\emptyset$. (To find such a path $\gamma$, take a geodesic in a Cayley graph for $H$ and embed it into $X$ by extending the orbit map along appropriate geodesic segments.)
    
    Now, since $x\in\Lambda H$, there exists a sequence $h_n\in H$ such that the sequence of geodesic segments, $\beta_n=[\0,h_n\0]$, converges (uniformly on compact subsets) to a geodesic ray $\beta:[b,\infty)\rightarrow X$ with $\beta(\infty)=x$ and $\beta(b)=\0$. Since $H$ is a stable group of isometries, $\beta_n$ is $N''$-Morse by Definition \ref{stabledefinition}. Up to potentially re-parameterizing $\beta$, there exists $T>a$ so that $d(\beta(T),\alpha(T))<\delta_N$ by Corollary \ref{betterswensonlemma4}.
    
    Since $\beta_n$ converges to $\beta$ uniformly on $\overline{B_{T+1}(\0)}$, the ball of radius $T+1$ centered at $\0$, there exists $n\in\mathbb{N}$ and $p\in \beta_n$ so that $d(\beta(T),p)<1$. Since $\gamma_n$ is an $(\lambda,\lambda)$-quasi-geodesic with endpoints on $\beta_n$, there exists $q\in\gamma_n$ so that $d(p,q)\leq N''(\lambda,\lambda)$. Finally, there exists $h\in H$ so that $d(h\0,q)\leq\lambda$. 
    
    Therefore by the triangle inequality, \[d(\alpha(T),h\0)\leq d(\alpha(T),\beta(T))+d(\beta(T),p)+d(p,q)+d(q,h\0)\leq \delta_N+1+N''(\lambda,\lambda)+2\lambda.\] 
    As $\alpha(T)\in\alpha'$, this completes the proof.
\end{proof}


\section{Applictations to Teichm\"uller Space}

We conclude by illustrating applications of the above work in the setting of Teichm\"uller space for a finite type surface $S$. We begin by setting some notation. Let $\text{Mod}(S)$ denote the mapping class group of $S$, i.e. the group of orientation-preserving homeomorphisms on $S$ up to isotopy equivalence, which may permute punctures but fixes boundaries pointwise. Let $\mathcal{T}(S)$ denote the associated Teichm\"uller space, equipped with the Teichm\"uller metric. We will denote the set of projective measured foliations on $S$ by $\text{PMF}(S)$. The Thurston compactification of Teichm\"uller space is $\overline{\mathcal{T}(S)} = \mathcal{T}(S)\cup\text{PMF}(S)$. For a thorough overview of the mapping class group, it's associated Teichm\"uller space, and projective measured foliations, we refer the reader to \cite{MasurMinsky2000,fathi2012thurstons,Farb2011-rd,Behrstock2006}. 

We take a moment to restate Corollary \ref{TeichApplicationAllConical} using the above notation:

\begin{cor} (Restatement of Corollary \ref{TeichApplicationAllConical}.)
    Let $H$ be a finitely generated subgroup of $\textup{Mod}(S)$. The following are equivalent:

    \begin{enumerate}
        \item Every element of $\Lambda H\subset \partial \textup{Mod}(S)$ is a conical limit point of $H\curvearrowright\textup{Mod}(S)$ and $\Lambda H$ is compact (in the Morse boundary of $\textup{Mod}(S)$).
        \item Every element of $\Lambda H\subset \textup{PMF}(S)$ is a conical limit point of $H\curvearrowright\mathcal{T}(S)$.
    \end{enumerate}
\end{cor}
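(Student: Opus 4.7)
The plan is to prove this corollary by concatenating three biconditionals already available in the literature, without any new geometric input. Condition (1) of the corollary is literally condition (3) of Theorem \ref{MyMainTheorem} applied to the action of $H$ by left multiplication on a Cayley graph of $\textup{Mod}(S)$, which is a proper geodesic metric space. Thus the first step is to invoke the equivalence $(2) \Leftrightarrow (3)$ of Theorem \ref{MyMainTheorem} to replace (1) with the statement that $H$ acts boundary convex cocompactly on $\textup{Mod}(S)$ in the sense of Definition \ref{BoundaryConvexCocompactDefinition}.

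Next, I would cite \cite[Theorem 1.18]{boundarycc2016}, which identifies, for finitely generated subgroups of $\textup{Mod}(S)$, boundary convex cocompactness on $\textup{Mod}(S)$ with the classical Farb–Mosher/Hamenst\"adt notion of convex cocompactness as a subgroup of $\textup{Mod}(S)$. Finally, \cite[Theorem 1.2]{kent_leininger_2008} states that a finitely generated $H < \textup{Mod}(S)$ is convex cocompact in this classical sense if and only if every limit point of $H$ in the Thurston compactification $\overline{\mathcal{T}(S)} = \mathcal{T}(S) \cup \textup{PMF}(S)$ is a conical limit point of $H \curvearrowright \mathcal{T}(S)$, which is exactly condition (2). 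Chaining these three equivalences together gives the corollary.

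There is no real obstacle here; the task is simply to check that the definitions line up across the three sources, in particular that the orbit-based Morse limit set $\Lambda H \subset \partial \textup{Mod}(S)$ from Theorem \ref{MyMainTheorem} and the $\textup{PMF}(S)$-valued limit set used by Kent–Leininger are the ones being referred to on each side. The trivial case where $H$ is finite can be dispatched separately since both conditions are vacuous. I note that Theorem \ref{TeichApplicationOneConical}, also mentioned in the introduction, gives a more direct proof of the implication $(1) \Rightarrow (2)$ via an equivariant map $\textup{Mod}(S) \to \mathcal{T}(S)$ sending conical Morse limit points to conical limit points in $\overline{\mathcal{T}(S)}$, but obtaining the reverse implication seems to require the full chain above.
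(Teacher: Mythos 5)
Your proposal matches the paper's own derivation: the corollary is obtained exactly by chaining Theorem \ref{MyMainTheorem} with \cite[Theorem 1.18]{boundarycc2016} (yielding the intermediate theorem that conicality plus compactness of $\Lambda H$ characterizes convex cocompactness in $\textup{Mod}(S)$) and then with \cite[Theorem 1.2]{kent_leininger_2008}. The only caveat, present in the paper's statement as well, is that condition (1) of the corollary omits the hypothesis $\Lambda H \neq \emptyset$ appearing in Theorem \ref{MyMainTheorem}(3), so the degenerate case of empty Morse limit set deserves the separate remark you give it.
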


By work of Cordes, there exists a homeomorphism $g_\infty:\partial \text{Mod}(S)\rightarrow \partial \mathcal{T}(S)$ (where $\partial$ refers to the Morse boundary) \cite[Theorem 4.12]{cordes_boundry2016}, and there exists a natural continuous injective map $h_\infty: \partial\mathcal{T}(S)\hookrightarrow \text{PMF}(S)$ \cite[Proposition 4.14]{cordes_boundry2016}. We denote the continuous inclusion formed by the composition of $g_\infty$ and $h_\infty$ as $f_\infty: \partial\textup{Mod}(S)\hookrightarrow \text{PMF}(S)$. The purpose of this section is to prove the following theorem.

\begin{thm}\label{TeichApplicationOneConical}
    Let $H$ be a subgroup of $\textup{Mod}(S)$, and let $x_\infty\in\Lambda H\subseteq \partial\textup{Mod}(S)$ be a conical limit point of $H\curvearrowright\textup{Mod}(S)$. Then $f_\infty(x_\infty)\in\textup{PMF}(S)$ is a conical limit point of $H\curvearrowright\mathcal{T}(S)$.
\end{thm}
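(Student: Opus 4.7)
The plan is to transfer the conical hypothesis from $\textup{Mod}(S)$ to $\mathcal{T}(S)$ using the $\textup{Mod}(S)$-equivariant orbit map $\phi \colon \textup{Mod}(S) \to \mathcal{T}(S)$, $g \mapsto g\cdot o$, which is coarsely Lipschitz with some constant $L$. By Cordes' Theorem 4.12 of \cite{cordes_boundry2016}, $\phi$ induces the homeomorphism $\partial\textup{Mod}(S) \cong \partial\mathcal{T}(S)$: if $\gamma$ is an $N$-Morse geodesic ray in $\textup{Mod}(S)$ from $e$ with $\gamma(\infty) = x_\infty$, then $\phi \circ \gamma$ is a Morse quasi-geodesic ray in $\mathcal{T}(S)$ representing the same Morse boundary point as the Morse Teichm\"uller geodesic ray $\alpha$ from $o$ provided by that identification, and Proposition 4.14 of \cite{cordes_boundry2016} gives $\alpha(\infty) = f_\infty(x_\infty)$ in the Thurston compactification, with $f_\infty(x_\infty)$ necessarily uniquely ergodic.

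Now fix any Teichm\"uller geodesic ray $\beta \colon [0,\infty) \to \mathcal{T}(S)$ with $\beta(\infty) = f_\infty(x_\infty)$ in $\textup{PMF}(S)$; the goal is to produce a single $R > 0$, independent of $\beta$, such that infinitely many $h \in H$ satisfy $d_{\mathcal{T}(S)}(h\cdot o, \beta) \leq R$. Masur's unique-ergodicity theorem ensures $\alpha$ and $\beta$ asymptotically fellow-travel at a bounded distance $D$. Moreover, since $\alpha$ is Morse and $\phi\circ\gamma$ is a Morse quasi-geodesic ray from $o$ representing the same Morse boundary point as $\alpha$, a standard application of Corollary \ref{betterswensonlemma4} (after replacing $\phi\circ\gamma$ by a nearby geodesic) gives that $\phi\circ\gamma$ and $\alpha$ fellow-travel at distance $D'$ depending only on $N$. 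By conicality of $x_\infty$ and Proposition \ref{ConicalLimitPointsOnlyNeedTailsOfRays}, there exists $K > 0$ such that every subray of $\gamma$ meets $\Nhbd_K(He)$, producing infinitely many $h_n \in H$ with $d_{\textup{Mod}(S)}(h_n, \gamma) \leq K$ and $h_n e \to x_\infty$. Applying the $L$-coarsely Lipschitz map $\phi$ yields $d_{\mathcal{T}(S)}(h_n \cdot o, \phi\circ\gamma) \leq LK + L$, and combining the three bounds gives
\[
d_{\mathcal{T}(S)}(h_n \cdot o, \beta) \;\leq\; LK + L + D' + D \;=:\; R,
\]
independent of $\beta$, so $f_\infty(x_\infty)$ is a conical limit point of $H \curvearrowright \mathcal{T}(S)$.

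The main obstacle is establishing the uniform fellow-traveling of $\phi\circ\gamma$ with $\alpha$ at a distance depending only on $N$, not on any specific choice of $\gamma$ or $\beta$. This reduces to Cordes' identification of the Morse boundaries (which asserts that $\alpha$ and $\phi\circ\gamma$ represent the same Morse boundary point of $\mathcal{T}(S)$), together with uniform fellow-traveling for Morse rays sharing a Morse boundary endpoint, controlled by the Morse gauge of $\alpha$. Once this is in hand, the argument is a clean concatenation of the coarse-Lipschitz bound on $\phi$, the Morse fellow-traveling bound, and Masur's asymptotic fellow-traveling for Teichm\"uller rays to a uniquely ergodic foliation.
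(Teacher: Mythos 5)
Your argument is correct in its essentials and shares the skeleton of the paper's proof: push a Morse geodesic ray $\gamma$ for $x_\infty$ into $\mathcal{T}(S)$ by a coarsely Lipschitz, coarsely $H$-equivariant map, straighten the resulting Morse quasi-geodesic to a geodesic ray $\alpha$, and transport the conical neighborhoods of the orbit along this chain. The differences lie in the two coarse comparisons. First, you use the orbit map $g\mapsto g\cdot o$, which is exactly equivariant, whereas the paper uses the short-marking map $\Upsilon$ and must prove its coarse equivariance (Lemma \ref{UpsilonIsCoarseEquivariant}); these maps coarsely agree, but the fact you actually need --- that the image of a Morse geodesic of $\textup{Mod}(S)$ is a Morse quasi-geodesic of $\mathcal{T}(S)$ --- is \cite[Lemma 4.9]{cordes_boundry2016}, stated for $\Upsilon$, not Theorem 4.12, and transferring it to the orbit map is exactly the coarse-agreement statement you are eliding, so make that citation and comparison explicit. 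Second, on the target side the paper only verifies the Morse-conical condition of Proposition \ref{ConicalLimitPointsOnlyNeedTailsOfRays}, comparing $\alpha$ with an arbitrary \emph{Morse geodesic} ray to $f_\infty(x_\infty)$ via \cite[Corollary 2.6]{cordes_boundry2016}; you instead compare $\alpha$ with an arbitrary Teichm\"uller ray converging to $f_\infty(x_\infty)$ in $\textup{PMF}(S)$, which requires unique ergodicity of $f_\infty(x_\infty)$ (Cordes' Proposition 4.14) together with Masur's asymptoticity theorem. Your route yields a formally stronger, more Kent--Leininger-flavored conclusion at the cost of importing genuine Teichm\"uller theory, while the paper's route stays entirely inside the Morse-boundary formalism. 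One precision point: your constant $D$ from Masur's theorem depends a priori on $\beta$, but since the distance between the tails of $\alpha$ and $\beta$ tends to $0$ and your orbit points $h_n\cdot o$ escape to infinity along $\alpha$, one still obtains a single constant $R$ independent of $\beta$; this should be said explicitly, as uniformity over all rays is what conicality requires.
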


\begin{remark}
    This theorem directly proves $(1)\Rightarrow(2)$ of Corollary \ref{TeichApplicationAllConical}.
\end{remark}

Our proof of Theorem \ref{TeichApplicationOneConical} uses several of the tools developed in \cite{cordes_boundry2016}, so we take a moment to recall the construction and definitions presented therein and from \cite{MasurMinsky2000}. The \textit{curve graph}, denoted $\mathcal{C}(S)$, is a locally infinite simplicial graph whose vertices are isotopy classes of simple closed curves on $S$. We join two vertices with an edge it there exists representative from each class that are disjoint. 

A set of (pairs of) curves $\mu=\{(\alpha_1,\beta_1),(\alpha_2,\beta_2),\dots,(\alpha_m,\beta_m)\}$ is called a complete clean \textit{marking} of $S$ if the $\{\alpha_1,\dots,\alpha_m\}$ forms a pants decomposition of $S$, if each $\alpha_i$ is disjoint from $\beta_j$ whenever $i\not=j$, and if each $\alpha_i$ intersects $\beta_i$ once if the surface filled by $\alpha_i$ and $\beta_i$ is a one-punctured torus. (Otherwise, $\alpha_i$ and $\beta_i$ will intersect twice, and the filling surface is a four-punctured sphere.) We call $\{\alpha_1,\dots,\alpha_m\}$ the \textit{base} of $\mu$ and we call $\beta_i$ the \textit{transverse curve to $\alpha_i$ in $\mu$}. For the sake of completeness, we also define the marking graph, $\mathcal{M}(S)$, although the definition is not needed in this paper. 

$\mathcal{M}(S)$ is the simplicial graph whose vertices are markings as defined above, and two markings are joined by an edge of length one if they differ by an \emph{elementary move}: either twisting $\beta_i$ around $\alpha_i$ by a full, or when possible, by a half twist, or by swapping $\beta_i$ and $\alpha_i$. Note that, after performing an elementary move, one may need to replace the curves with isotopically equivalent curves to create a valid marking again. The marking graph $\mathcal{M}(S)$ is quasi-isometric to the mapping class group $\textup{Mod}(S)$, see \cite{MasurMinsky2000} and \cite{Behrstock2006}.

For each $\sigma\in\mathcal{T}(S)$ there is a \textit{short marking}, which is constructed inductively by picking the shortest curves in $\sigma$ for the base and repeating for the transverse curves. Now define a map $\Upsilon:\mathcal{M}(S)\rightarrow\mathcal{T}(S)$ by taking a marking $\mu$ to the region in the $\epsilon$-thick part of $\mathcal{T}(S)$, denoted $\mathcal{T}_\epsilon(S)$, where $\mu$ is a short marking in that region. As stated in \cite{cordes_boundry2016}, it is a well known fact that $\Upsilon$ is a coarsely well defined map which is coarsely Lipschitz. We take a moment to prove that this map is coarsely equivariant. 

\begin{lemma}\label{UpsilonIsCoarseEquivariant}
    Let $\Upsilon:\mathcal{M}(S)\rightarrow\mathcal{T}(S)$ be as above, and let $H<\textup{Mod}(S)$ be finitely generated. Then there exists a constant $K\geq0$ such that, for any marking $\mu\in\mathcal{M}$ and for any $h\in H$,
    $$d_{\mathcal{T}(S)}(h\Upsilon(\mu),\Upsilon(h\mu))\leq K.$$
\end{lemma}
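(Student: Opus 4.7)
The core observation is that the action of $\textup{Mod}(S)$ on $\mathcal{T}(S)$ is by isometries of the Teichm\"uller metric, that this action preserves the $\epsilon$-thick part $\mathcal{T}_\epsilon(S)$, and that the short marking construction is equivariant. The plan is therefore to show that $h\Upsilon(\mu)$ is itself a valid choice of $\Upsilon(h\mu)$, and then conclude the uniform bound purely from the coarse well-definedness of $\Upsilon$.

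First, I would record what ``coarsely well defined'' means for $\Upsilon$: there is a constant $K_0 = K_0(S,\epsilon)$ so that whenever $\sigma_1,\sigma_2 \in \mathcal{T}_\epsilon(S)$ both admit the marking $\mu$ as a short marking, we have $d_{\mathcal{T}(S)}(\sigma_1,\sigma_2) \leq K_0$. Next, I would verify the equivariance of the short marking procedure: if $\mu = \{(\alpha_i,\beta_i)\}$ is a short marking at $\sigma \in \mathcal{T}_\epsilon(S)$, then $h\mu = \{(h\alpha_i, h\beta_i)\}$ is a short marking at $h\sigma$. This follows because hyperbolic length transforms equivariantly, $\ell_{h\sigma}(\gamma) = \ell_\sigma(h^{-1}\gamma)$, so the inductive selection of shortest base curves and then shortest transverse curves, carried out at $h\sigma$, simply produces the translate $h\mu$ of the selection at $\sigma$. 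Since isometries preserve $\mathcal{T}_\epsilon(S)$, we also have $h\sigma \in \mathcal{T}_\epsilon(S)$.

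Combining the two steps, $h\Upsilon(\mu) \in \mathcal{T}_\epsilon(S)$ is a point at which $h\mu$ is a short marking, and so is $\Upsilon(h\mu)$ by construction. Coarse well-definedness then yields $d_{\mathcal{T}(S)}(h\Upsilon(\mu), \Upsilon(h\mu)) \leq K_0$, giving the lemma with $K = K_0$. The main obstacle, as I see it, is really just carefully handling the definitional ambiguity inherent in ``short marking'' (ties in the inductive shortest-curve selection, or the non-uniqueness in the choice of representative in $\Upsilon^{-1}(\mu)$); both of these forms of ambiguity are absorbed into the same coarse constant $K_0$, so no additional work involving $H$ or its generating set is required, and in particular the finite generation hypothesis on $H$ is used only to situate the lemma within the applications, not in the bound itself.
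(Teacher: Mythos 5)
Your proof is correct and follows essentially the same route as the paper: both arguments observe that the mapping class group action permutes curve lengths equivariantly, so $h\mu$ is a short marking at $h\Upsilon(\mu)$ as well as (by definition) at $\Upsilon(h\mu)$, and then both invoke the coarse well-definedness of $\Upsilon$ to bound the distance between these two points by a uniform constant. Your additional remarks --- making the coarse well-definedness constant explicit and noting that finite generation of $H$ plays no role in the bound --- are accurate refinements of the same argument.
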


\begin{proof}
    Let $\mu=\{(\alpha_1,\beta_1),\dots,(\alpha_m,\beta_m)\}\in\mathcal{M}(S)$ and $h\in H$ be arbitrary. Let $\sigma\in\mathcal{T}(S)$ so that $\mu$ is a short marking on $\sigma$. (Equivalently, let $\sigma=\Upsilon(\mu)$.) Since the action of $H$ on $\mathcal{T}(S)$ permutes the lengths of curves, the length of each pair $(\alpha_i,\beta_i)$ with respect to $\sigma$ is the same as the length of the pair $(h\alpha_i,h\beta_i)$ with respect to $h\sigma$. Therefore as $\mu$ was a short marking for $\sigma$, this shows that $h \mu$ is a short marking for $h\sigma=h\Upsilon(\mu)$. However, by definition of $\Upsilon$, $h\mu$ is also a short marking for $\Upsilon(h\mu)$. As $\Upsilon$ was a coarsely well defined function, this shows that $d_{\mathcal{T}(S)}(h\Upsilon(\mu),\Upsilon(h\mu))\leq K$ for some $K\geq 0$, as desired.
\end{proof}

We now prove Theorem \ref{TeichApplicationOneConical}, using the above lemma and several tools from \cite{cordes_boundry2016} to show that points in conical neighborhoods in $\mathcal{M}(S)$ end up in conical neighborhoods of $\mathcal{T}(S)$.

\begin{proof}
    Fix $\mu_0\in\mathcal{M}(S)$. Let $x\in\partial\mathcal{M}(S)_{\mu_0}$ be a conical limit point of $H\mu_0$. Put $\sigma_0=\Upsilon(\mu_0)$. We shall show that $f_\infty(x)$ is a conical limit point of $H\sigma_0$ by verifying the condition in Proposition \ref{ConicalLimitPointsOnlyNeedTailsOfRays}. Let $T\geq 0$ be arbitrary, and let $\lambda:[0,\infty)\rightarrow\mathcal{T}(S)$ be an arbitrary Morse geodesic ray with $\lambda(0)=\sigma_0$ and $\lambda(\infty)=f_\infty(x)$. 
    
    Let $\alpha:\mathbb{N}\rightarrow\mathcal{M}(S)$ be an $N$-Morse geodesic with $\alpha(0)=\mu_0$ and $\alpha(\infty)=x$. By \cite[Lemma 4.9]{cordes_boundry2016}, $\Upsilon(\alpha)$ is an $N'$-Morse $(A,B)$-quasi-geodesic, for some $A$, $B$, and $N'$ depending only on $N$. Put $\beta=\Upsilon(\alpha)$. Notice that $\beta(0)=\sigma_0$ and, by the construction of $f_\infty$, we have $\beta(\infty)=f_\infty(x)$. (For details on the construction of $f_\infty$, we refer to \cite{cordes_boundry2016}, specifically Proposition 4.11, Theorem 4.12, and Proposition 4.14.)

    Now let $\gamma_n = [\sigma_0, \beta(n)]$. Then each $\gamma_n$ is $N''$-Morse for $N''$ depending on $N$, and by Arzel\'a-Ascoli and \cite[Lemma 2.10]{cordes_boundry2016}, a subsequence of the $\gamma_n$ converges to a geodesic ray $\beta$ which is $N''$-Morse, and by \cite[Lemma 4.9]{cordes_boundry2016}, $\beta$ is bounded Hausdorff distance from $\gamma$, where the bound only depends on $N$. Say that $d_{Haus}(\beta,\gamma)\leq K_1$ for $K_1\geq0$. By \cite[Corollary 2.6]{cordes_boundry2016}, $d_{Haus}(\gamma,\lambda)\leq K_2$ where $K_2\geq0$ depends only on $N$.
    Choose $S\geq0$ so that, for all $s\geq S$, $d_{\mathcal{T}(S)}(\beta(s),\lambda_{[T,\infty)})\leq K_1+K_2$.

    By Proposition \ref{ConicalLimitPointsOnlyNeedTailsOfRays}, there exists $L\geq 0$ where, for all $r\geq0$, $d_{\mathcal{M}(S)}(h\mu_0,\alpha|_{[r,\infty)})\leq L$ for some $h\in H$. Since $\beta=\Upsilon(\alpha)$ and $\Upsilon$ is coarse Lipschitz, there exits $K_3\geq0$ and $h\in H$ so that $d_{\mathcal{T}(S)}(\Upsilon(h\mu_0),\beta|_{[S,\infty)})\leq K_3$. Let $s_0\in[S,\infty)$ so that $d_{\mathcal{T}(S)}(\Upsilon(h\mu_0),\beta(s_0))\leq K_3$. By Lemma \ref{UpsilonIsCoarseEquivariant}, there exists $K_4\geq0$ such that $d_{\mathcal{T}(S)}(\Upsilon(h\mu_0),h\Upsilon(\mu_0))\leq K_4$.

    By the triangle inequality, we have
    \begin{align*}
    d_{\mathcal{T}(S)}(h\sigma_0,\lambda|_{[T,\infty)}) &\leq d(h\Upsilon(\mu_0),\Upsilon(h\mu_0)) +d(\Upsilon(h\mu_0),\beta(s_0)) +d(\beta(s_0),\lambda|_{[T,\infty)})\\ &\leq K_4+K_3+K_2+K_1
    \end{align*}

    By Proposition \ref{ConicalLimitPointsOnlyNeedTailsOfRays}, $\lambda(\infty)=f_\infty(x)$ is a conical limit point of $H\sigma_0$.
\end{proof}

\printbibliography

\end{document}